	\newtheorem{thm}{Theorem}[section]
	\newtheorem{cor}[thm]{Corollary}
	\newtheorem{defn}[thm]{Definition}
	\newtheorem{lem}[thm]{Lemma}
	\newtheorem{prop}[thm]{Proposition}
\tikzset{nomorepostaction/.code={\let\tikz@postactions\pgfutil@empty}}
  \def\tikz@plane@origin{\pgfpointxyz{0}{0}{#1}}%
  \def\tikz@plane@x{\pgfpointxyz{1}{0}{#1}}%
  \def\tikz@plane@y{\pgfpointxyz{0}{1}{#1}}%
\tikzset{xyp/.style={canvas is xy plane at z=#1}}
\tikzset{xzp/.style={canvas is xz plane at y=#1}}
\tikzset{yzp/.style={canvas is yz plane at x=#1}}
\tikzset{mypersp/.style={x={(0:1cm)},y={(55:0.75cm)},z={(90:1cm)}}}
\newcommand{\drawcube}{
\draw [xzp=0] (0,0) -- (0,1) -- (1,1) -- (1,0) -- cycle;
\draw [yzp=1] (0,0) -- (0,1) -- (1,1) -- (1,0) -- cycle;
\draw [xyp=1] (0,0) -- (0,1) -- (1,1) -- (1,0) -- cycle;
\draw [densely dotted, yzp=0] (0,0) -- (0,1) -- (1,1) -- (1,0) -- cycle;
\draw [densely dotted, xzp=1] (0,0) -- (0,1) -- (1,1) -- (1,0) -- cycle;
\draw [densely dotted, xyp=0] (0,0) -- (0,1) -- (1,1) -- (1,0) -- cycle;
}
\newcommand{\drawsigns}{
\draw [xyp=0] (-0.1,-0.1) node {$+$};
\draw [xyp=0] (-0.2, 1.1) node {$-$};
\draw [xyp=0] (0.9,-0.1) node  {$-$};
\draw [xyp=0] (1.1,1.1) node  {$+$};
\draw [xyp=1] (-0.1,-0.1) node {$+$};
\draw [xyp=1] (-0.2, 1.1) node {$-$};
\draw [xyp=1] (0.9,-0.1) node  {$-$};
\draw [xyp=1] (1.1,1.1) node  {$+$};
}
\newcommand{\drawvertices}{
\draw [xyp=0, color=black!50!green, fill=none] (0,0) circle (2pt);
\draw [xyp=0, color=black!50!green, fill=none] (1,1) circle (2pt);
\draw [xyp=1, color=black!50!green, fill=none] (0,0) circle (2pt);
\draw [xyp=1, color=black!50!green, fill=none] (1,1) circle (2pt);
}
\newcommand{\labelvertices}{
\draw [xyp=0.5] (-0.1,-0.1) node {$v$};
\draw [xyp=0.5] (1.1,1.1,) node {$w$};
}
\newcommand{\bottomdiagonal}{
\draw [xyp=0, color=black!50!green, ultra thick] (0,0) -- (1,1);
\draw [xyp=0, color=black!50!green, fill=black!50!green] (0,0) circle (2pt);
\draw [xyp=0, color=black!50!green, fill=black!50!green] (1,1) circle (2pt);
}
\newcommand{\topdiagonal}{
\draw [xyp=1, color=black!50!green, ultra thick] (0,0) -- (1,1);
\draw [xyp=1, color=black!50!green, fill=black!50!green] (0,0) circle (2pt);
\draw [xyp=1, color=black!50!green, fill=black!50!green] (1,1) circle (2pt);
}
\newcommand{\rightbyp}
{
\fill [gray!30!white, draw=none, yzp=1, opacity=0.5] (0,0) -- (1,0) -- (1,1) -- (0,1) -- cycle;
\draw [red, ultra thick, yzp=1] (0.5,0) arc (0:90:0.5);
\draw [red, ultra thick, yzp=1] (0.5,1) arc (-180:-90:0.5);
}
\newcommand{\rightclean}
{
\draw [red, ultra thick, yzp=1] (0.5,0) arc (180:90:0.5);
\draw [red, ultra thick, yzp=1] (0.5,1) arc (0:-90:0.5);
}
\newcommand{\frontclean}
{
\draw [red, ultra thick, xzp=0] (0.5,0) arc (180:90:0.5);
\draw [red, ultra thick, xzp=0] (0.5,1) arc (0:-90:0.5);
}
\newcommand{\frontbyp}
{
\fill [gray!30!white, draw=none, xzp=0, opacity=0.5] (0,0) -- (1,0) -- (1,1) -- (0,1) -- cycle;
\draw [red, ultra thick, xzp=0] (0.5,0) arc (0:90:0.5);
\draw [red, ultra thick, xzp=0] (0.5,1) arc (-180:-90:0.5);
}
\newcommand{\leftclean}
{
\draw [red, ultra thick, densely dotted, yzp=0] (0.5,0) arc (0:90:0.5);
\draw [red, ultra thick, densely dotted, yzp=0] (0.5,1) arc (-180:-90:0.5);
}
\newcommand{\leftbyp}
{
\fill [gray!30!white, draw=none, yzp=0, opacity=0.5] (0,0) -- (1,0) -- (1,1) -- (0,1) -- cycle;
\draw [red, ultra thick, densely dotted, yzp=0] (0.5,0) arc (180:90:0.5);
\draw [red, ultra thick, densely dotted, yzp=0] (0.5,1) arc (0:-90:0.5);
}
\newcommand{\backclean}
{
\draw [red, ultra thick, densely dotted, xzp=1] (0.5,0) arc (0:90:0.5);
\draw [red, ultra thick, densely dotted, xzp=1] (0.5,1) arc (-180:-90:0.5);
}
\newcommand{\backbyp}
{ 
\fill [gray!30!white, draw=none, xzp=1, opacity=0.5] (0,0) -- (1,0) -- (1,1) -- (0,1) -- cycle;
\draw [red, ultra thick, densely dotted, xzp=1] (0.5,0) arc (180:90:0.5);
\draw [red, ultra thick, densely dotted, xzp=1] (0.5,1) arc (0:-90:0.5);
}
\newcommand{\topoff}
{
\draw [red, ultra thick, xyp=1] (0.5,0) arc (0:90:0.5);
\draw [red, ultra thick, xyp=1] (0.5,1) arc (-180:-90:0.5);
}
\newcommand{\topon}
{
\draw [red, ultra thick, xyp=1] (0.5,0) arc (180:90:0.5);
\draw [red, ultra thick, xyp=1] (0.5,1) arc (0:-90:0.5);
}
\newcommand{\bottomon}
{
\draw [red, ultra thick, densely dotted, xyp=0] (0.5,0) arc (180:90:0.5);
\draw [red, ultra thick, densely dotted, xyp=0] (0.5,1) arc (0:-90:0.5);
}
\newcommand{\bottomoff}
{
\draw [red, ultra thick, densely dotted, xyp=0] (0.5,0) arc (0:90:0.5);
\draw [red, ultra thick, densely dotted, xyp=0] (0.5,1) arc (-180:-90:0.5);
}
\newcommand{\strandbackgroundshading}
{
\draw [draw=none, fill=gray!10!white] (0,0) -- (0,0.5) -- (1,0.5) -- (1,0) -- cycle;
\draw [draw=none, fill=gray!10!white] (0,1) -- (0,1.5) -- (1,1.5) -- (1,1) -- cycle;
}
\newcommand{\cubestrandsetup}
{
\draw [ultra thick, color=black!50!green] (0,0) -- (0,0.5);
\draw [ultra thick, color=black!50!green] (0,1) -- (0,1.5);
\draw (0,0.25) to [bend left=90] (0,1.25);
\draw (-0.15,0.15) node {$v$};
\draw (-0.15,1.35) node {$w$};
}
\newcommand{\lefton}
{
\draw [color=black!50!green, fill=black!50!green] (0,0.25) circle (2 pt);
\draw [color=black!50!green, fill=black!50!green] (0,1.25) circle (2 pt);
}
\newcommand{\leftoff}
{
\draw [color=black!50!green, fill=none] (0,0.25) circle (2 pt);
\draw [color=black!50!green, fill=none] (0,1.25) circle (2 pt);
}
\newcommand{\righton}
{
\draw [color=black!50!green, fill=black!50!green] (1,0.25) circle (2 pt);
\draw [color=black!50!green, fill=black!50!green] (1,1.25) circle (2 pt);
}
\newcommand{\rightoff}
{
\draw [color=black!50!green, fill=none] (1,0.25) circle (2 pt);
\draw [color=black!50!green, fill=none] (1,1.25) circle (2 pt);
}
\newcommand{\aftervused}
{
\draw [draw=none, fill=gray!50!white] (0,0.25) -- (0,0.5) -- (1,0.5) -- (1,0.25) -- cycle;
}
\newcommand{\beforevused}
{
\draw [draw=none, fill=gray!50!white] (0,0.25) -- (0,0) -- (1,0) -- (1,0.25) -- cycle;
}
\newcommand{\afterwused}
{
\draw [draw=none, fill=gray!50!white] (0,1.25) -- (0,1.5) -- (1,1.5) -- (1,1.25) -- cycle;
}
\newcommand{\beforewused}
{
\draw [draw=none, fill=gray!50!white] (0,1.25) -- (0,1) -- (1,1) -- (1,1.25) -- cycle;
}
\newcommand{\To}{\longrightarrow}
\newcommand{\0}{{\bf 0}}
\newcommand{\A}{\mathcal{A}}
\newcommand{\Cat}{\mathcal{C}}
\newcommand{\Z}{\mathbb{Z}}
\newcommand{\ZZ}{\mathcal{Z}}
\DeclareMathOperator{\Gr}{Gr}
\DeclareMathOperator{\Int}{Int}
\DeclareMathOperator{\inv}{inv}
\DeclareMathOperator{\Inv}{Inv}
\DeclareMathOperator{\Ob}{Ob}
\DeclareMathOperator{\supp}{supp}
\begin{document}

\title{Strand algebras and contact categories} 

\author{Daniel V. Mathews} 
\affil{School of Mathematical Sciences, Monash University \\
\texttt{Daniel.Mathews@monash.edu}}



\maketitle

\begin{abstract}
We demonstrate an isomorphism between the homology of the strand algebra of bordered Floer homology, and the category algebra of the contact category introduced by Honda. This isomorphism provides a direct correspondence between various notions of Floer homology and arc diagrams, on the one hand, and contact geometry and topology on the other. In particular, arc diagrams correspond to quadrangulated surfaces, idempotents correspond to certain basic dividing sets, strand diagrams correspond to contact structures, and multiplication of strand diagrams corresponds to stacking of contact structures. The contact structures considered are cubulated, and the cubes are shown to behave equivalently to local fragments of strand diagrams.
\end{abstract}


\tableofcontents

\section{Introduction}

In this paper we prove an isomorphism providing a new link between 3-dimensional contact topology and Heegaard Floer theory. These two subjects are well known to be closely related: for instance, contact structures on 3-manifolds yield elements of Heegaard Floer homology \cite{HKM09, HKMContClass, OSContact}, which behave rather nicely (e.g. \cite{HKM08}), and there is much other evidence of deep connections (e.g. \cite{Cooper15, EVVZ14, Honda_Tian}). More specifically, Zarev showed that, in the context of bordered sutured Heegaard Floer theory, the homology of the strand algebra has a description in terms of sutured Floer homology of a thickened surface $\Sigma \times [0,1]$ \cite{Zarev10}. We will show that the homology of this strand algebra can be interpreted directly in terms of contact structures on a thickened surface $\Sigma \times [0,1]$. 

\begin{thm}
\label{thm:main_thm}
Let $\ZZ$ be an arc diagram corresponding to a quadrangulated surface $(\Sigma,Q)$. Then there is an isomorphism of unital $\Z_2$-algebras
\[
CA (\Sigma,Q)
\cong
H(\A(\ZZ)),
\]
where $CA (\Sigma,Q)$ is the contact category algebra of $(\Sigma,Q)$, $\A(\ZZ)$ is the strand algebra of $\ZZ$, and $H(\A(\ZZ))$ is its homology.
\end{thm}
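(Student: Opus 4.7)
The plan is to prove the isomorphism via a local-to-global argument, exploiting the fact that the quadrangulation $Q$ of $\Sigma$ corresponds to the arc diagram $\ZZ$, and that each quadrilateral lifts to a cube in $\Sigma \times [0,1]$. Both the contact category algebra $CA(\Sigma,Q)$ and the homology $H(\A(\ZZ))$ should decompose compatibly over these cubes, reducing the global isomorphism to a finite local bijection plus a gluing/multiplicativity check.

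First I would fix convenient bases on each side. On the algebraic side, results on $H(\A(\ZZ))$ (for instance via the Lipshitz--Ozsv\'ath--Thurston and Zarev analyses of the strand algebra) give a basis consisting of reduced strand diagrams, i.e.\ those with no double crossings, together with idempotents indexed by subsets of the matched points of $\ZZ$. On the contact side, objects of the contact category are isotopy classes of dividing sets on $(\Sigma,Q)$, and morphisms are tight contact structures on $\Sigma \times [0,1]$ with prescribed dividing sets on the two boundary components, modulo isotopy relative to the boundary. The identity corresponds to an $I$-invariant contact structure, and composition is vertical stacking.

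Next I would construct the isomorphism locally, cube by cube. For each quadrilateral $q \in Q$, lift to a cube $[0,1]^3$ and enumerate the possible tight contact structures on this cube compatible with each admissible choice of dividing sets on its six faces; then match each such local contact structure bijectively with a local strand-diagram fragment crossing $q$. This bijection is exactly the content of the abstract's assertion that "the cubes are shown to behave equivalently to local fragments of strand diagrams.'' Assembling the local bijections over all cubes of the quadrangulation produces a candidate $\Z_2$-linear map $\Phi : CA(\Sigma,Q) \to H(\A(\ZZ))$ which sends a basic dividing set to the corresponding idempotent and a tight contact structure to its associated strand diagram.

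Finally I would verify that $\Phi$ is a unital algebra isomorphism. Well-definedness on isotopy classes should follow by matching the Giroux--Honda bypass-layer description of tight contact structures on $\Sigma \times [0,1]$ with the moves identifying strand diagrams in the homology. For multiplicativity, vertical stacking of contact structures must correspond to concatenation of strand diagrams; the crucial compatibility is that whenever stacking produces an overtwisted contact structure (which is zero in $CA(\Sigma,Q)$), the corresponding product of strand diagrams must contain a double crossing (hence be zero in $H(\A(\ZZ))$), and vice versa.

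The main obstacle will be the local cube analysis together with its interaction with multiplication. One must classify all tight contact structures on a single cube with arbitrary admissible boundary dividing sets, exhibit the precise bijection with local strand fragments, and then show that assembling these local pieces is compatible with both horizontal gluing across $\Sigma$ and vertical stacking across $[0,1]$. Once this combinatorial reduction to the cube is carried out, the genuinely three-dimensional contact topology is converted into a finite check, and the global isomorphism follows.
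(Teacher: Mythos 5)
Your proposal follows essentially the same route as the paper: identify the arc diagram with the quadrangulated surface, cut $\Sigma\times[0,1]$ into cubes, classify the tight standard cubes and match them one-to-one with local strand-diagram fragments near each matched pair (using the Lipshitz--Ozsv\'ath--Thurston/Zarev description of $H(\A(\ZZ))$ by crossingless representatives and idempotents), and then check that stacking corresponds to concatenation, with overtwistedness matching vanishing in homology via a common used face/step. This is precisely the paper's argument, so the plan is correct in both structure and the key compatibility checks.
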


We will define all these notions as we proceed. The proof is direct and explicit, encapsulated in a local correspondence, depicted in figure \ref{fig:tight_cubes}, between fragments of strand diagrams, and dividing sets drawn on cubes. It comes as part of a larger set of correspondences, summarised in figure \ref{fig:dictionary}, translating between various notions of contact geometry, on the one hand, and notions of strand algebras, on the other. The contact structures we consider are \emph{cubulated}, and this work suggests more generally that an approach to 3-dimensional contact topology based on cubulation may be useful. (This notion of cubulation is far simpler than the one which has been so fruitful in recent years in 3-manifold topology, e.g. \cite{Agol_ICM_virtual, Borgeron-Wise12, Calegari13_virtual, Wise12}.)

The contact-topology side of the isomorphism involves the \emph{contact category} introduced by Honda \cite{HonCat}, which has been studied by the author \cite{Me09Paper} and a formal version of which has been studied by Cooper \cite{Cooper15}. A contact category $\Cat(\Sigma,F)$ is defined for any \emph{marked surface} $(\Sigma,F)$ consisting of a compact oriented surface $\Sigma$ with signed points on its boundary. Roughly, objects of $\Cat(\Sigma,F)$ are dividing sets $\Gamma$ on $\Sigma$, which describe contact structures in a small product neighbourhood of $\Sigma$, and morphisms are contact structures on $\Sigma \times [0,1]$ which near $\Sigma \times \{0\}$ and $\Sigma \times \{1\}$ are prescribed by source and target objects $\Gamma_0, \Gamma_1$. 
Composition of morphisms stacks these contact structures on top of each other. In \cite{Me12_itsy_bitsy} we discussed a natural type of \emph{quadrangulation} of a marked surface, and in \cite{Me14_twisty_itsy_bitsy} we showed such quadrangulations are equivalent to a graph-theoretic structure which we called a \emph{tape graph}. A quadrangulation naturally yields \emph{basic} dividing sets, which take a standard form on each square of the quadrangulation. (Related notions appear in work of Zarev \cite{Zarev10} and Honda--Tian \cite{Honda_Tian}.) The dividing sets basic with respect to a quadrangulation $Q$ yield a subcategory $\Cat(\Sigma,Q)$ of $\Cat(\Sigma,F)$, and the contact category algebra $CA (\Sigma,Q)$ appearing in theorem \ref{thm:main_thm} is essentially its $\Z_2$ category algebra.

On the Heegaard-Floer side of the isomorphism, we have the differential graded algebra $\A(\ZZ)$ defined by Zarev \cite{Zarev09} in developing the theory of bordered sutured Floer homology. This theory is a generalisation of both the bordered Floer homology of Lipshitz--Ozsv\'{a}th--Thurston \cite{LOT08} and the sutured Floer homology of Juh\'{a}sz \cite{Ju06}, which in turn are generalisations of Heegaard Floer homology \cite{OS04Prop, OS04Closed, OS06} to 3-manifolds with boundary. A bordered sutured 3-manifold, roughly speaking, is a 3-manifold with boundary where some of the boundary is sutured. In bordered sutured Floer theory, the boundary surface is described by an \emph{arc diagram} $\mathcal{Z}$, which can be regarded as the boundary data of a 3-manifold Heegaard decomposition. The \emph{strand algebra} $\A(\ZZ)$ associated to $\mathcal{Z}$ generalises the algebra associated to a pointed matched circle in bordered Floer homology \cite{LOT08}, and is generated by certain \emph{strand diagrams} related to the arc diagram $\mathcal{Z}$. Roughly, multiplication is defined by concatenating such diagrams, and the differential resolves intersections in such diagrams.

In proving theorem \ref{thm:main_thm}, we will show that there is a natural correspondence between notions arising on both sides. We will show that an arc diagram $\mathcal{Z}$ is equivalent to a quadrangulated surface $(\Sigma,Q)$. We will show that idempotents of $\A(\ZZ)$ correspond to basic dividing sets on $(\Sigma,Q)$, which are also the elementary dividing sets of \cite[sec. 6.1]{Zarev10}. We will show that matched-pair fragments of $\ZZ$ correspond to the squares of $Q$, or the cubes of the corresponding cubulation of $\Sigma \times [0,1]$. We will show that strand diagrams representing elements of $H(\A(\ZZ))$ correspond to cubulated contact structures on $\Sigma \times [0,1]$, and that all contact structures in $CA(\Sigma,Q)$ are of this form. And we will show that concatenation of strand diagrams corresponds to stacking the contact cubes of cubulated contact structures.

As discussed in \cite{LOT08}, strand diagrams are designed to encode Reeb chords arising as asymptotics of holomorphic curves near the boundary of a 3-manifold in bordered Floer homology. The multiplication and differential in the strand algebra are designed so as to describe the behaviour of rigid holomorphic curves in a cylinder between Reeb chords. It is perhaps surprising that the homology of this algebra should so precisely encode something ostensibly quite different, such as contact structures. Recently, Honda and Tian in \cite{Honda_Tian} have found embeddings of contact categories of discs into a homotopy category of bounded cochain complexes of finitely projective left modules over a ring isomorphic to the homology of a strand algebra, which indicates these connections run deeper still.

Several results exist in the literature on the homology of the strand algebra. In \cite{LOT11_Bimodules} Lipshitz--Ozsv\'{a}th--Thurston gave an explicit description of this homology, in the case of a pointed matched circle. We rely upon a generalisation of this description in proving theorem \ref{thm:main_thm}. In \cite{Zarev10}, Zarev showed that $H(\A(\mathcal{Z}))$ is isomorphic to the direct sum of the sutured Floer homology of $\Sigma \times [0,1]$, with various sets of sutures, ranging over basic dividing sets on $\Sigma \times \{0\}$ and $\Sigma \times \{1\}$. We write $M(\Gamma_0, \Gamma_1)$ to denote $\Sigma \times [0,1]$ with sutures $\Gamma_0, \Gamma_1$ drawn on $\Sigma \times \{0\}, \Sigma \times \{1\}$ respectively (full details are given in definition \ref{defn:M}). Combining Zarev's isomorphism with theorem \ref{thm:main_thm}, we obtain the following.
\begin{cor}
\label{cor:SFH_dimension}
Let $(\Sigma, Q)$ be a quadrangulated surface. Then there is an isomorphism of unital $\Z_2$ algebras
\[
CA(\Sigma,Q) \cong \bigoplus_{\Gamma_0, \Gamma_1 \text{ basic}}
SFH \left( -M(\Gamma_0, \Gamma_1) \right),
\]
where the sum is taken over all pairs of dividing sets $(\Gamma_0, \Gamma_1)$ basic with respect to $Q$. In particular, $SFH(-M(\Gamma_0, \Gamma_1))$ has dimension equal to the number of isotopy classes of tight contact structures on $M(\Gamma_0, \Gamma_1)$.
\end{cor}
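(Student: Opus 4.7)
The plan is to assemble the corollary from two existing inputs: Theorem \ref{thm:main_thm} and Zarev's isomorphism from \cite{Zarev10}. Theorem \ref{thm:main_thm} identifies $CA(\Sigma,Q) \cong H(\A(\ZZ))$ as unital $\Z_2$-algebras, while Zarev's theorem identifies $H(\A(\ZZ)) \cong \bigoplus_{\Gamma_0,\Gamma_1} SFH(-M(\Gamma_0,\Gamma_1))$, where the sum runs over pairs of basic (elementary) dividing sets. Composing these two isomorphisms directly yields the first displayed isomorphism of the corollary.

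For the dimension statement, I would argue that both sides decompose as $\Z_2$-vector spaces under the bigrading by pairs of idempotents, and that the composite isomorphism respects this bigrading. By the dictionary set up while proving Theorem \ref{thm:main_thm}, idempotents of $H(\A(\ZZ))$ correspond to basic dividing sets on $(\Sigma,Q)$, and idempotents of $CA(\Sigma,Q)$ are the identity morphisms on such dividing sets. For fixed basic $\Gamma_0, \Gamma_1$, the $(\Gamma_0,\Gamma_1)$-summand of $CA(\Sigma,Q)$ is the morphism space $\Mor_{\Cat(\Sigma,Q)}(\Gamma_0,\Gamma_1)$, which by definition of the contact category has a $\Z_2$-basis given by the isotopy classes of tight contact structures on $M(\Gamma_0,\Gamma_1)$ (overtwisted structures being sent to zero). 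On the Floer-theoretic side, the corresponding summand is exactly $SFH(-M(\Gamma_0,\Gamma_1))$.

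To conclude, I would observe that both isomorphisms are unital $\Z_2$-algebra maps and carry the idempotent $e_\Gamma$ on one side to the corresponding idempotent on the other, hence they respect the bigrading by pairs of idempotents. Matching $(\Gamma_0,\Gamma_1)$-components then yields the dimension equality between $SFH(-M(\Gamma_0,\Gamma_1))$ and the $\Z_2$-span of isotopy classes of tight contact structures on $M(\Gamma_0,\Gamma_1)$. The main step worth verifying carefully, and the likeliest place for a subtle slip, is confirming that the indexing of Zarev's direct sum by pairs of elementary dividing sets agrees literally with the idempotent decomposition arising from the contact-categorical side of Theorem \ref{thm:main_thm}; this should follow by tracing through the two correspondences, but it is the one point where the argument amounts to more than bookkeeping.
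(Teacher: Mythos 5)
Your proposal is correct and follows the same route as the paper: combine Theorem \ref{thm:main_thm} with Zarev's isomorphism, use the fact that both maps respect the idempotent/summand decomposition, and then read off the dimension of $SFH(-M(\Gamma_0,\Gamma_1))$ from the basis of $1_{\Gamma_0}\cdot CA(\Sigma,Q)\cdot 1_{\Gamma_1}$ given by isotopy classes of tight contact structures. The point you flag as needing care --- that Zarev's indexing by elementary dividing sets matches the idempotent decomposition from the contact side --- is exactly what the paper handles by citing that Zarev's isomorphism already restricts summand-by-summand as $I(s)\cdot H(\A(\ZZ))\cdot I(t)\cong SFH(-M(\Gamma_s,\Gamma_t))$.
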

As discussed by Zarev in \cite{Zarev10}, the multiplication on the right hand side of the isomorphism is given by gluing maps on $SFH$. Moreover, Zarev asserts that the gluing map agrees with contact cobordism maps of \cite{HKM08}, although the proof has not yet appeared. So we expect the isomorphism of corollary \ref{cor:SFH_dimension} in fact sends each contact structure in $CA(\Sigma, Q)$ to the corresponding contact invariant (in the sense of \cite{HKM09}) in $SFH$.

This paper is structured as follows. In section \ref{sec:contact_categories} we develop the necessary contact geometry, starting from background on convex surfaces and ending at a description of the contact category algebra. In section \ref{sec:strands_algebra} we develop the strand algebra and describe its homology, relying on work of Lipshitz--Ozsv\'{a}th--Thurston and Zarev. In section \ref{sec:correspondence} we prove theorem \ref{thm:main_thm}, describing the correspondence between the strand algebra and contact category. Finally in section \ref{sec:SFH} we consider sutured Floer homology and prove corollary \ref{cor:SFH_dimension}.

\medskip

\noindent {\bf Acknowledgments.} I would like to thank Rumen Zarev for introducing me to this topic. The author is supported by Australian Research Council grant DP160103085.

\section{Contact categories and quadrangulated surfaces}
\label{sec:contact_categories}

We begin by discussing the contact geometry on one side of the isomorphism of theorem \ref{thm:main_thm}.

\subsection{Convex surfaces and dividing sets}

\begin{defn}
\label{def:marked_surface}
A \emph{marked surface} is a pair $(\Sigma,F)$ consisting of an orientable compact surface $\Sigma$, together with a finite set $F \subset \partial \Sigma$ of signed points, such that
\begin{enumerate}
\item every component of $\Sigma$ has nonempty boundary,
\item every boundary component of $\Sigma$ contains points of $F$, and
\item around each boundary component of $\Sigma$, the points of $F$ alternate in sign.
\end{enumerate}
\end{defn}
The boundary $\partial \Sigma$ is cut by $F$ into arcs; we can orient these arcs positively and  negatively, in alternating fashion. Following \cite{Me12_itsy_bitsy}, the positive arcs $C_+$ and negative arcs $C_-$ are oriented so that $C_\pm \subset \pm \partial \Sigma$ and $\partial C_\pm = -F$ as oriented manifolds.

A word on terminology: in \cite{Zarev09} Zarev called such $(\Sigma, F)$ a ``sutured surface"; in \cite{Me12_itsy_bitsy} we called $(\Sigma, F)$ a ``sutured background", and called a ``sutured surface" such a surface with further curves were drawn on $\Sigma$. To avoid confusion, here we avoid ``sutured surface" and use ``marked surface". 

\begin{defn}
A \emph{dividing set} on a marked surface $(\Sigma,F)$ is an oriented 1-manifold $\Gamma$ properly embedded in $\Sigma$, such that $\partial \Gamma = F$ as oriented 0-manifolds, and such that $\Gamma$ cuts $\Sigma$ into alternating positive and negative components, $\Sigma \backslash \Gamma = R_+ \sqcup R_-$, where $R_\pm$ are oriented as $\pm \Sigma$, and $\partial R_+ = C_+ \cup \Gamma$, $\partial R_- = C_- \cup \Gamma$.
\end{defn}
Thus $\Gamma$ cuts $\Sigma$ coherently into positive and negative components. We regard dividing sets as equivalent if they are isotopic through dividing sets, and in practice we elide the distinction between dividing sets and their equivalence classes.

A dividing set $\Gamma$ on $(\Sigma,F)$ determines a contact structure $\xi_\Gamma$ on $\Sigma \times [0,1]$. Letting $X$ denote the unit vector in the $[0,1]$ direction, this $\xi_\Gamma$ is invariant in the $X$ direction, and $\partial \Sigma \times \{\cdot\}$ is Legendrian. Here $\Gamma$ is the set of points of each $\Sigma \times \{\cdot\}$ where $X \in \xi_\Gamma$. Proceeding along a Legendrian boundary component $C$ of $\Sigma$, the contact planes of $\xi_\Gamma$ rotate by $\pi$ for each successive point of $C \cap \Gamma$, so that $\xi_\Gamma$ makes $-\frac{1}{2} |C \cap \Gamma|$ full twists along $C$, relative to $S$.

More generally, a \emph{convex surface} $\Sigma$ in a contact 3-manifold $(M, \xi)$ is an embedded surface with a transverse contact vector field $X$. When $\Sigma$ has boundary, we require it to be Legendrian and for $\xi$ to have negative twisting along each boundary component with respect to $\Sigma$. A convex surface $\Sigma$ has a dividing set, which is the locus of points where $X \in \xi$. Every embedded surface in $(M, \xi)$ is $C^\infty$-close to a convex surface. The dividing set determines the germ of the contact structure near $\Sigma$, in an appropriate sense. See \cite{Gi91} for details.

For us a \emph{sutured 3-manifold} is a 3-manifold $M$ with a dividing set $\Gamma$ drawn on its boundary; it can be regarded as a sutured 3-manifold in the sense of \cite{Gabai83}. We regard $\Gamma$ as a prescribed contact structure near $\partial M$. A contact structure on $(M, \Gamma)$ is a contact structure on $M$ with this boundary condition.

A dividing set $\Gamma$ on $\Sigma$ is \emph{tight} if the contact structure it determines near $\Sigma$ is tight. If $\Sigma$ is a sphere, $\Gamma$ is tight if and only if $\Gamma$ is connected; otherwise, $\Gamma$ is tight if and only if it contains no contractible closed curves \cite{Hon00I}.

The quantity $\frac{1}{2}|F| - \chi(\Sigma)$ turns out to be a useful measure of the complexity of a marked surface $(\Sigma,F)$; we call it the \emph{index} of $(\Sigma,F)$, denoted $I(\Sigma,F)$. The index is additive on connected components, and $I(\Sigma,F) \geq 0$. A connected $(\Sigma,F)$ has $I(\Sigma, F)=0$ if and only if it is a disc with two marked points or \emph{bigon}, and has
$I(\Sigma, F)=1$ if and only if it is a \emph{square}, i.e. a disc with 4 marked points.

If $\Sigma$ is convex in $(M, \xi)$ with dividing set $\Gamma$, then the Euler class $e(\xi)$ evaluates on $[\Sigma] \in H_2(M)$ as $\chi(R_+)- \chi(R_-)$. The same applies when $\Sigma$ has boundary and $e(\xi)$ is a relative Euler class. We therefore define the Euler class of a dividing set $\Gamma$ to be $e(\Gamma) = \chi(R_+) - \chi(R_-) \in \Z$. One can show that $e(\Gamma) \equiv I(\Sigma,F)$ mod $2$. Moreover, if $\Gamma$ is tight then $|e(\Gamma)| \leq I(\Sigma, F)$. This is essentially the \emph{Thurston-Bennequin inequality}; see e.g. \cite{Bennequin83, ElMartinet}, and \cite{Me12_itsy_bitsy, Me16ContactCategories} for an account in the present context.

A properly embedded curve in a convex surface $\Sigma$ is \emph{Legendrian realisable} if, by a small isotopy of $\Sigma$ in an invariant neighbourhood, it can be made Legendrian \cite{Hon00I}. The \emph{Legendrian realisation principle} says that a properly embedded curve $c$ is Legendrian realisable if and only if it is transverse to $\Gamma$ and \emph{nonisolating} in the sense that every component of $\Sigma \backslash (\Gamma \cup c)$ has boundary intersecting $\Gamma$. 

\subsection{Quadrangulations}
\label{sec:quadrangulations}

A set of \emph{vertices} $V = V_+ \sqcup V_-$ on a marked surface $(\Sigma,F)$ consists of signed points on $\partial \Sigma$, with one vertex of $V_\pm$ in each boundary arc $C_\pm$ \cite{Me12_itsy_bitsy, Me14_twisty_itsy_bitsy}. Vertices are uniquely determined up to isotopy in $\partial \Sigma \backslash F$. As we proceed around an oriented boundary component of $\Sigma$, we pass vertices and marked points in the cyclic order $V_+, F_-, V_-, F_+, \ldots$. Thus each arc of a dividing set on $(\Sigma,F)$ runs from $F_-$ to $F_+$, $V_+ \subset R_+$ and $V_- \subset R_-$. See figure \ref{fig:neighbourhood_of_dividing_curve}. Indeed, with $\Sigma$ given, $V$ determines $F$, and $F$ determines $V$, up to an appropriate sense of isotopy. So we may regard the structures $(\Sigma,F)$, $(\Sigma,V)$ and $(\Sigma,F,V)$ as equivalent to each other, and interchangeable; we refer to any and all of them as \emph{marked surfaces}.

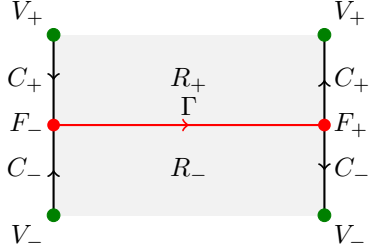
\begin{figure}
\begin{center}

\begin{tikzpicture}[
scale=1.2, 
fill = gray!10,
decomposition/.style={thick, draw=green!50!black}, 
vertex/.style={draw=green!50!black, fill=green!50!black},
suture/.style={thick, draw=red, postaction={nomorepostaction,decorate, decoration={markings,mark=at position 0.5 with {\arrow{>}}}}},
midarrow/.style={thick, postaction={nomorepostaction,decorate, decoration={markings,mark=at position 0.5 with {\arrow{>}}}}} ]

\coordinate [label = below left:{$V_-$}] (bl) at (0,0);
\coordinate [label = left:{$F_-$}] (ml) at (0,1);
\coordinate [label = above left:{$V_+$}] (tl) at(0,2);
\coordinate [label = below right:{$V_-$}] (br) at (3,0);
\coordinate [label = right:{$F_+$}] (mr) at (3,1);
\coordinate [label = above right:{$V_+$}] (tr) at (3,2);

\fill (0,0) rectangle (3,2);

\draw [suture] (ml) to node [midway, above] {$\Gamma$} (mr);
\draw [midarrow] (bl) to node [midway, left] {$C_-$} (ml);
\draw [midarrow] (tl) to node [midway, left] {$C_+$} (ml);

\draw [midarrow] (mr) to node [midway, right] {$C_-$} (br);
\draw [midarrow] (mr) to node [midway, right] {$C_+$} (tr);

\draw (1.5,0.5) node {$R_-$};
\draw (1.5,1.5) node {$R_+$};

\foreach \point in {tl, bl, tr, br}
\fill [vertex] (\point) circle (2pt);
\foreach \point in {ml, mr}
\fill [red] (\point) circle (2pt);

\end{tikzpicture}

\end{center}
\caption{Neighbourhood of a dividing curve in a marked surface $(\Sigma, F, V)$.}
\label{fig:neighbourhood_of_dividing_curve}
\end{figure}

There is a natural type of arc along which to cut a marked surface $(\Sigma, F, V)$, compatible with decomposition of dividing sets \cite{Me12_itsy_bitsy}.
\begin{defn}
A \emph{decomposing arc} on a marked surface $(\Sigma,V)$ is a properly embedded arc in $\Sigma$ with one endpoint in $V_+$ and one endpoint in $V_-$.
\end{defn}
Cutting $(\Sigma,V)$ along a decomposing arc yields a surface $\Sigma'$; the vertices $V$ naturally provide a set of vertices $V'$ on $\Sigma'$, so we have a marked surface $(\Sigma',V')$. The decomposing arc is \emph{trivial} if it cuts a bigon off $(\Sigma, V)$: in this case it connects two adjacent vertices by a boundary-parallel arc. Any connected marked surface $(\Sigma,V)$ other than a bigon or square has a nontrivial decomposing arc and thus we can successively decompose along nontrivial arcs until we arrive at a collection of squares; hence the following definition.
\begin{defn}
\label{def:quadrangulation}
A \emph{quadrangulation} $Q$ of a marked surface $(\Sigma,V)$ is a set of decomposing arcs which cut $(\Sigma,V)$ into a set of squares.
\end{defn}
See figure \ref{fig:punctured_torus_decomposition}  for an example. We denote a quadrangulated surface by $(\Sigma,Q)$, $(\Sigma,F,Q)$ of equivalent. In practice we may refer to a quadrangulation $Q$ either by the set of decomposing arcs, or by the complementary squares on $\Sigma$. We regard quadrangulations as equivalent if their decomposing arcs are isotopic and, as with dividing sets, elide the distinction between them and their equivalence classes.

\begin{figure}
\begin{center}

\begin{tabular}{c}
\begin{tikzpicture}[
scale=1.5, 
boundary/.style={ultra thick}, 
decomposition/.style={thick, draw=green!50!black}, 
vertex/.style={draw=green!50!black, fill=green!50!black},
>=triangle 90, 
decomposition glued1/.style={thick, draw=green!50!black, postaction={nomorepostaction,decorate, decoration={markings,mark=at position 0.5 with {\arrow{>}}}}},
decomposition glued2/.style={thick, draw = green!50!black, postaction={nomorepostaction, decorate, decoration={markings,mark=at position 0.5 with {\arrow{>>}}}}}
]

\coordinate [label = right:{$-$}] (0) at (0:1);
\coordinate [label = above right:{$+$}] (1) at (60:1);
\coordinate [label = above left:{$-$}] (2) at (120:1);
\coordinate [label = left:{$+$}] (3) at (180:1);
\coordinate [label = below left:{$-$}] (4) at (240:1);
\coordinate [label = below right:{$+$}] (5) at (300:1);

\fill [gray!10] (0) -- (1) -- (2) -- (3) -- (4) -- (5) -- cycle;

\draw [decomposition glued2] (0) -- (1);
\draw [decomposition glued1] (2) -- (1);
\draw [boundary] (2) -- (3);
\draw [decomposition glued2] (4) -- (3);
\draw [decomposition glued1] (4) -- (5);
\draw [boundary] (5) -- (0);
\draw [decomposition] (0) to (3);

\foreach \point in {0, 1, 2, 3, 4, 5}
\fill [vertex] (\point) circle (1pt);

\end{tikzpicture}
\end{tabular}
\begin{tabular}{c}
\includegraphics[scale=0.4]{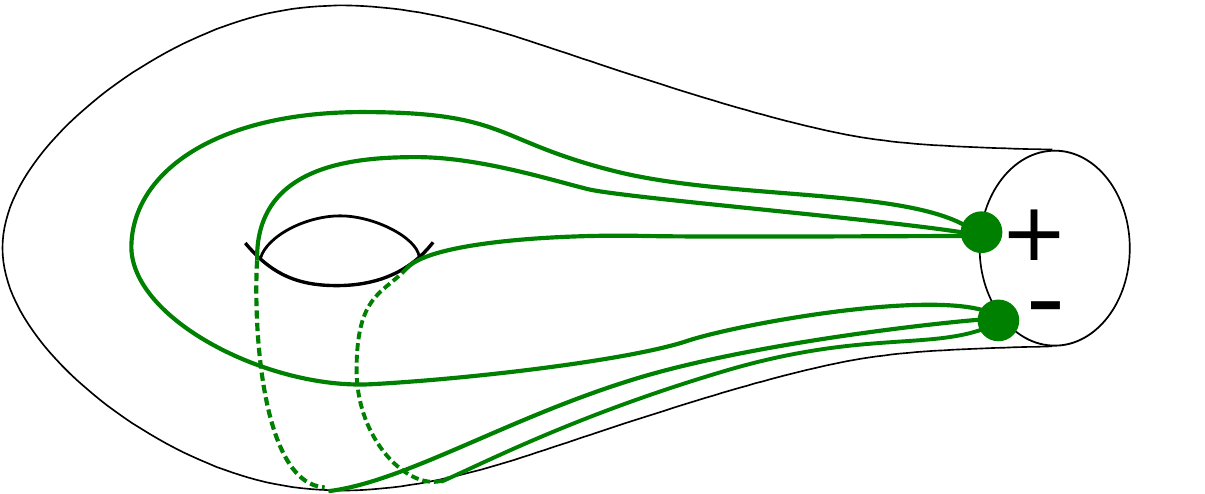}
\end{tabular}
\caption{Two equivalent views of a quadrangulation of a punctured torus with two vertices.}
\label{fig:punctured_torus_decomposition}
\end{center}
\end{figure}

The number of arcs and squares in a quadrangulation of $(\Sigma, V)$ is determined by the topology of $\Sigma$, and $|V|$: the number of arcs is is $\frac{1}{2}|V| - 2\chi(\Sigma)$, and the number of squares is the index $I(\Sigma,V)$ \cite[prop. 4.2]{Me12_itsy_bitsy}.

On a square there are precisely two tight dividing sets, which we call \emph{standard}: see figure \ref{fig:standard_squares}. They have Euler classes $1$ and $-1$, which we call \emph{standard positive} and \emph{standard negative} respectively. 

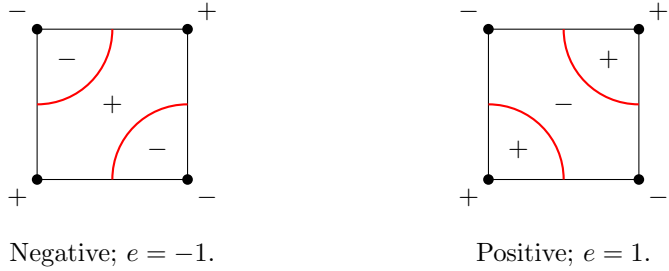
\begin{figure}
\begin{center}

\begin{tikzpicture}[
scale=2, 
suture/.style={thick, draw=red}, 
]

\coordinate [label = above left:{$-$}] (1tl) at (0,1);
\coordinate [label = above right:{$+$}] (1tr) at (1,1);
\coordinate [label = below left:{$+$}] (1bl) at (0,0);
\coordinate [label = below right:{$-$}] (1br) at (1,0);

\draw (1bl) -- (1br) -- (1tr) -- (1tl) -- cycle;
\draw [suture] (0.5,0) to [bend left=45] (1,0.5);
\draw [suture] (0,0.5) to [bend right=45] (0.5,1);
\draw (0.2,0.8) node {$-$};
\draw (0.5,0.5) node {$+$};
\draw (0.8,0.2) node {$-$};
\draw (0.5,-0.5) node {Negative; $e=-1$.};

\coordinate [label = above left:{$-$}] (2tl) at (3,1);
\coordinate [label = above right:{$+$}] (2tr) at (4,1);
\coordinate [label = below left:{$+$}] (2bl) at (3,0);
\coordinate [label = below right:{$-$}] (2br) at (4,0);

\draw (2bl) -- (2br) -- (2tr) -- (2tl) -- cycle;
\draw [suture] (3.5,1) to [bend right=45] (4,0.5);
\draw [suture] (3.5,0) to [bend right=45] (3,0.5);
\draw (3.8,0.8) node {$+$};
\draw (3.5,0.5) node {$-$};
\draw (3.2,0.2) node {$+$};
\draw (3.5,-0.5) node {Positive; $e=1$.};

\foreach \point in {1bl, 1br, 1tl, 1tr, 2bl, 2br, 2tl, 2tr}
\fill [black] (\point) circle (1pt);

\end{tikzpicture}

\caption{Standard dividing sets on the square.}
\label{fig:standard_squares}
\end{center}
\end{figure}

A dividing set $\Gamma$ on a quadrangulated surface can always be made transverse to the decomposing arcs of the quadrangulation; moreover each decomposing arc $a$ intersects $\Gamma$ an odd number of times. The situation is nicest when $|a \cap \Gamma| = 1$, for then $\Gamma$ also yields a dividing set on the marked surface obtained by cutting along $a$, without having to add any extra vertices.
\begin{defn}
Let $(\Sigma,F,Q)$ be a quadrangulated surface. A dividing set on $(\Sigma,F)$ which restricts to a standard dividing set on each square of $Q$ is called \emph{basic} with respect to $Q$. 
\end{defn}
When the quadrangulation is understood we will simply refer to $\Gamma$ as \emph{basic}. (Their contact invariants form a basis for $SFH(\Sigma \times S^1, F \times S^1)$; see \cite{Me12_itsy_bitsy, Me14_twisty_itsy_bitsy}.) Thus there are $2^{I(\Sigma,F)}$ basic dividing sets on any quadrangulation of $(\Sigma,F)$. The Euler class of a basic $\Gamma$ is given by the number of positive minus the number of negative squares. Basic dividing sets are tight; indeed, they are \emph{nonconfining}, meaning that every component of $\Sigma \backslash \Gamma$ intersects $\partial \Sigma$. Indeed, any nonconfining dividing set on a marked surface $(\Sigma,F)$ without bigon components is basic with respect to some quadrangulation. See \cite{Me12_itsy_bitsy} for details.

In \cite{Me14_twisty_itsy_bitsy} we defined the notion of \emph{tape graph} and discussed its relation to quadrangulations. A \emph{tape graph} is a finite graph with a total ordering of the half-edges incident to each vertex. (A ribbon graph, by comparison, has a cyclic ordering at each vertex.) Like a ribbon graph, a tape graph can naturally be thickened into an oriented surface with boundary. 

Given a quadrangulated surface $(\Sigma, Q)$, draw the diagonal in each square connecting the positive vertices. These vertices and diagonals form an embedded graph in $\Sigma$ called the \emph{positive spine} $G_Q^+$ of the quadrangulation $Q$. The positive spine naturally has the structure of a tape graph, since at each vertex the incident half-edges (of diagonals) are ordered clockwise by the orientation on $\Sigma$. Indeed, $\Sigma$ is the thickening of $G_Q^+$, and $\Sigma$ deformation retracts onto $G_Q^+$ \cite[lem. 4.2]{Me14_twisty_itsy_bitsy}. As we will see in section \ref{sec:arc_diagrams_tape_graphs}, tape graphs are closely related to arc diagrams.

\subsection{Corners and rounding}

It will be crucial in our constructions to \emph{smooth} and \emph{sharpen} a surface along a curve \cite{Hon00I}. Let $c$ a properly embedded Legendrian curve in a convex surface $\Sigma$ with dividing set $\Gamma$. Then by an isotopy of $\Sigma$ we may ``sharpen" $\Sigma$ along $c$ into a Legendrian corner. The resulting surface is no longer smooth, but can be regarded (near $c$) as two smooth convex surfaces meeting along a common Legendrian boundary $c$.

Conversely, if $\Sigma$ is a convex surface with a 1-dimensional corner along a simple closed Legendrian curve, then an isotopy makes $\Sigma$ into a smooth convex surface with $c$ an embedded Legendrian curve. The effect of such a smoothing or sharpening isotopy on the dividing set is shown in figure \ref{fig:smoothing_sharpening}. Thus, dividing sets \emph{interleave} along a corner. 

We can therefore broaden our definitions of convex surface and dividing set to include those obtained by sharpening along embedded Legendrian simple  closed curves. We can sharpen and round corners at will, with a well-defined effect on the dividing set. We also broaden our definition of a sutured 3-manifold to include such corners. Euler classes can still be evaluated, after rounding corners.

\begin{figure}
\begin{center}
\begin{tikzpicture}
\draw (-3,0) node {\includegraphics[scale=0.3]{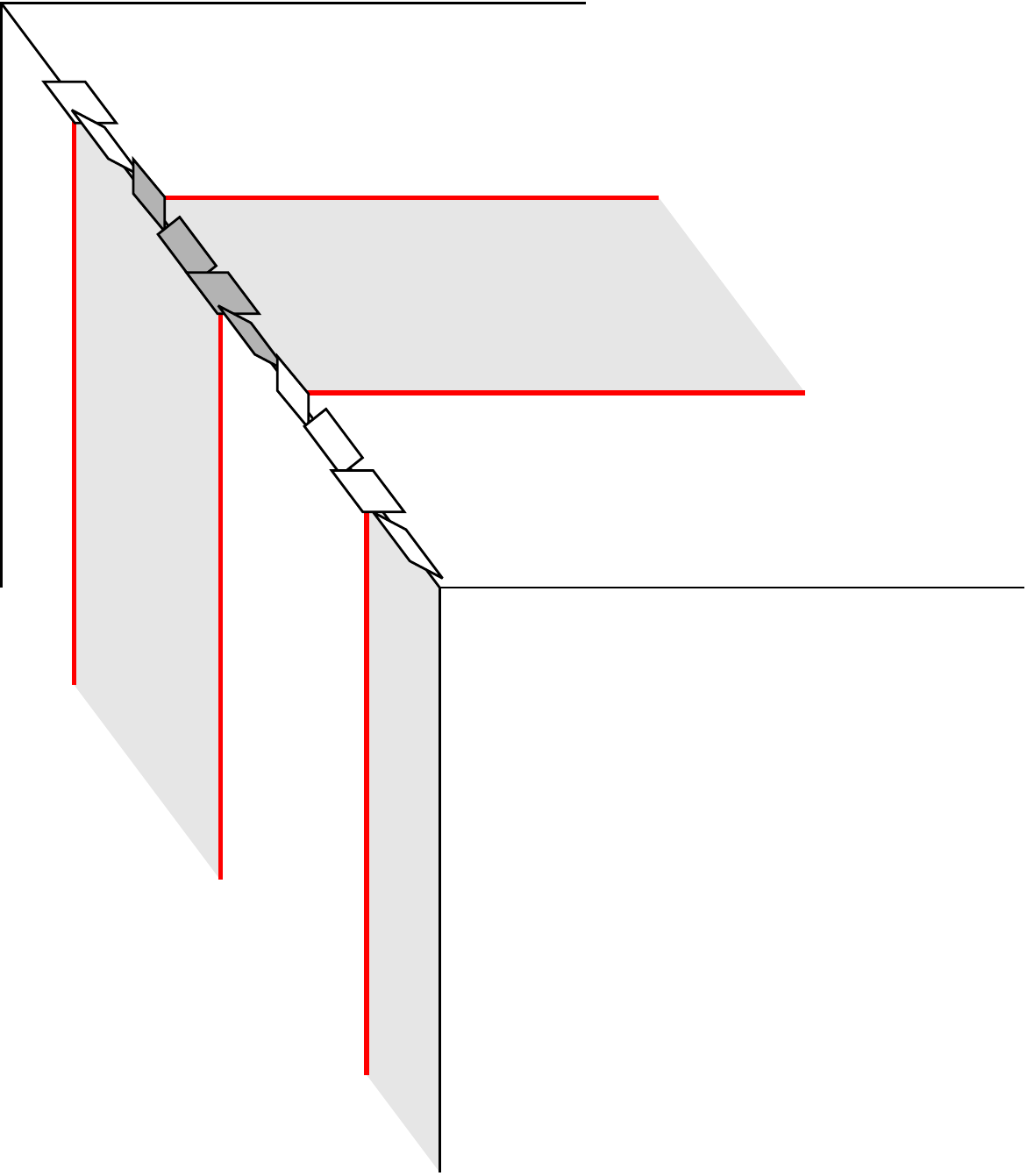}};
\draw [<->] (-0.5,0) -- (0.5,0);
\draw (3,0) node {\includegraphics[scale=0.3]{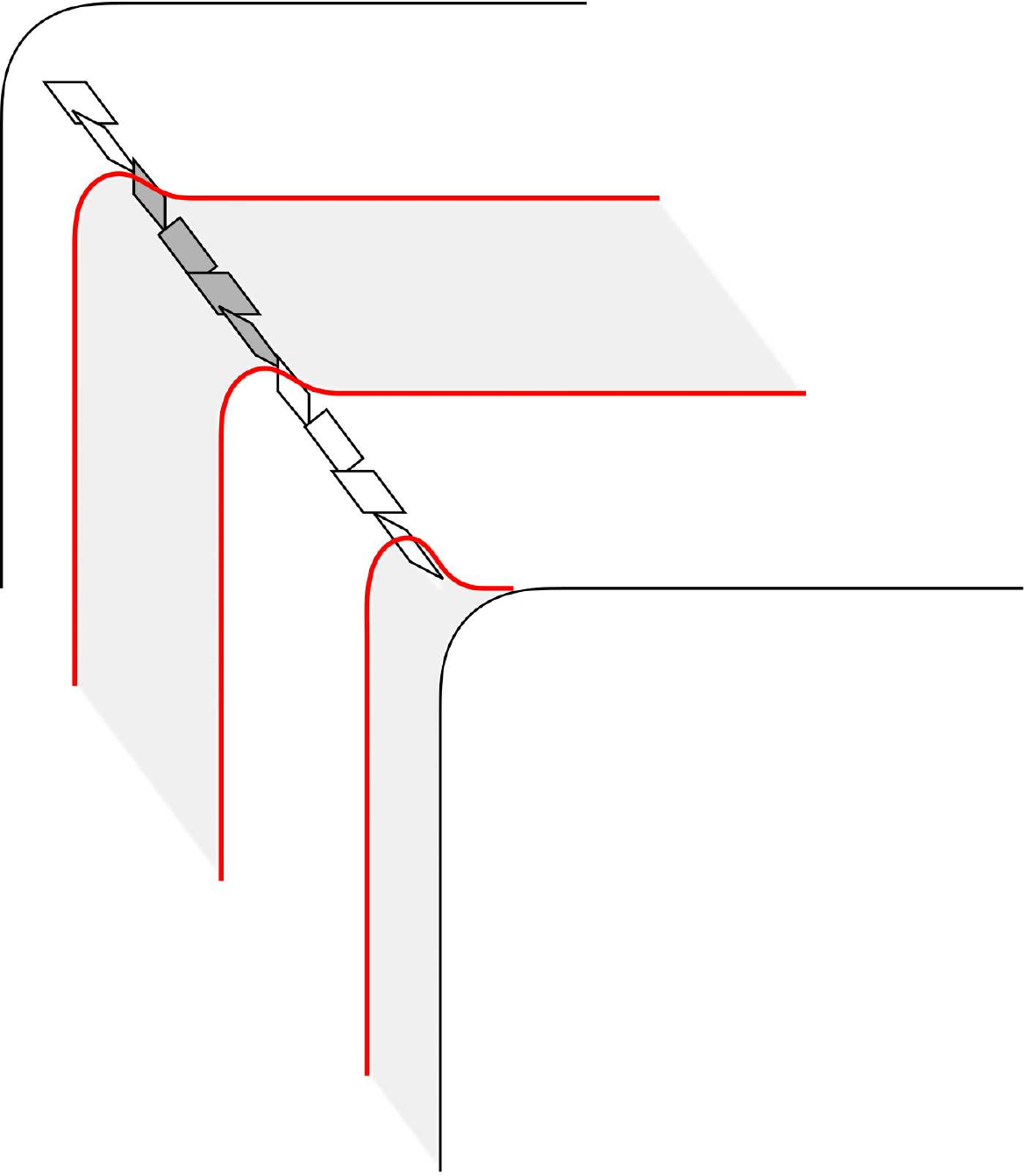}};
\end{tikzpicture}
\caption{Smoothing a corner, or conversely, sharpening along a Legendrian curve.}
\label{fig:smoothing_sharpening}
\end{center}
\end{figure}

Since a dividing set determines the germ of the contact structure near a surface, if we have two convex surfaces $S_1, S_2 \subset \partial M$ in the boundary of a contact 3-manifold $(M, \xi)$, whose dividing sets can be glued, then the contact structures can also be glued. If $S_1, S_2$ have corners along their Legendrian boundaries, then they can be glued so that the dividing sets connect smoothly across the corners, yielding a smooth dividing set on the glued manifold.

\subsection{Contact categories}

We will be concerned with contact structures on product manifolds of the form $\Sigma \times [0,1]$. In the case where $\Sigma$ is a disc, the contact geometry of such manifolds (and some surprisingly deep related algebraic and combinatorial structures) was studied in \cite{Me09Paper, Me10_Sutured_TQFT, Me12_elementary, Me16ContactCategories}.

For such a manifold, we will always describe its boundary as consisting of three parts: the \emph{top} $\Sigma \times \{1\}$, the \emph{bottom} $\Sigma \times \{0\}$, and the \emph{side} $\partial \Sigma \times [0,1]$. The $[0,1]$ direction is called \emph{vertical}: increasing and decreasing $[0,1]$ directions are \emph{up} and \emph{down}. We orient $\Sigma \times [0,1]$ so that the induced boundary orientation agrees with $\Sigma$ along $\Sigma \times \{1\}$, and disagrees along $\Sigma \times \{0\}$. There are corners along $\partial \Sigma \times \{0,1\}$. We consider contact structures on $\Sigma \times [0,1]$ with prescribed dividing sets on the top and bottom, and a vertical dividing set $\{\cdot\} \times [0,1]$ on the side. Because of the corners, it is natural to use a marked surface $(\Sigma, F, V)$, with top and bottom dividing sets having boundary $F$, and side dividing set having boundary $V$. This leads to the following definition.
\begin{defn}
\label{defn:M}
Let $\Gamma_0, \Gamma_1$ two dividing sets on the marked surface $(\Sigma,F,V)$. The sutured 3-manifold $M(\Gamma_0, \Gamma_1)$ is 
\[
M(\Gamma_0, \Gamma_1) = \Big( \Sigma \times [0,1], \; (-\Gamma_0) \times \{0\} \cup V \times [0,1] \cup \Gamma_1 \times \{1\} \Big).
\]
\end{defn}
Here $V \times [0,1]$ is oriented: $V_+ \times [0,1]$ is oriented upwards, and $V_- \times [0,1]$ downwards. On $\Sigma \times \{0\}$, the signs of $R_+, R_-$ are interchanged by the orientation reversal. The Euler class of the dividing set on $M(\Gamma_0, \Gamma_1)$ is $e(\Gamma_1) - e(\Gamma_0)$, so contact structures exist on $M(\Gamma_0, \Gamma_1)$ if and only if $e(\Gamma_0) = e(\Gamma_1)$.

Such contact structures can be \emph{stacked} as follows. Let $\Gamma_0, \Gamma_1, \Gamma_2$ be dividing sets on $(\Sigma, F)$, let $\xi_0$ be a contact structure on $M(\Gamma_0, \Gamma_1)$, and let $\xi_1$ be a contact structure on $M(\Gamma_1, \Gamma_2)$. Since the top dividing set of $\xi_0$ agrees with the bottom dividing set of $\xi_1$, we may glue these two faces together and obtain a contact structure on $\Sigma \times [0,2]$, which of course is homeomorphic to $\Sigma \times [0,1]$. Thus we obtain a contact structure on $M(\Gamma_0, \Gamma_2)$, which we say is obtained by \emph{stacking $\xi_1$ on $\xi_0$}.

The notion of a \emph{contact category} was introduced by Honda \cite{HonCat} and has been discussed in several papers \cite{Cooper15, Honda_Tian, Me09Paper, Me12_elementary, Me16ContactCategories}. We gave a rough idea of a contact category in the introduction; we now make it precise. The only subtleties are that we keep Euler classes separate (since the existence of a contact structure on $M(\Gamma_0, \Gamma_1)$ implies $e(\Gamma_0) = e(\Gamma_1)$); and overtwisted contact structures, regarded as trivial (their classification reduces to homotopy theory of plane fields \cite{ElOT}), are collapsed into ``zero" objects and morphisms. We refer to \emph{zero} and \emph{overtwisted} objects interchangeably; similarly to zero and overtwisted morphisms.
\begin{defn}
Let $(\Sigma,F)$ be a marked surface, and $e$ an integer such that $e \equiv I(\Sigma,F)$ mod $2$ and $|e| \leq I(\Sigma,F)$. The \emph{contact category of $(\Sigma,F)$ with Euler class $e$}, denoted $\Cat_e(\Sigma,F)$, consists of the following.
\begin{enumerate}
\item The objects are equivalence classes of tight dividing sets $\Gamma$ on $(\Sigma,F)$ such that $e(\Gamma) = e$, together with a zero object $*_e$.
\item The morphisms are as follows.
\begin{enumerate}
\item
If $\Gamma_0, \Gamma_1$ are tight objects, the morphisms $\Gamma_0 \To \Gamma_1$ are the isotopy classes of tight contact structures on $M(\Gamma_0, \Gamma_1)$, together with a zero morphism $\0_{\Gamma_0,\Gamma_1}$.
\item
If $X,Y$ are two objects, at least one of which is $*_e$, then there is a single zero morphism $\0_{X,Y} : X \To Y$.
\end{enumerate}
\item
The identity morphisms are as follows.
\begin{enumerate}
\item If $\Gamma$ is a tight object, the identity $1_\Gamma : \Gamma \To \Gamma$ is the isotopy class of the contact structure $\xi_\Gamma$ on $M(\Gamma, \Gamma)$ invariant in the $[0,1]$ direction.
\item The identity $1_* : *_e \To *_e$ is $\0_{*_e,*_e}$.
\end{enumerate}
\item
The composition of morphisms is as follows.
\begin{enumerate}
\item
The composition of two tight morphisms $\Gamma_0 \stackrel{\xi_0}{\To} \Gamma_1 \stackrel{\xi_1}{\To} \Gamma_2$ is the isotopy class of the contact structure on $M(\Gamma_0, \Gamma_2)$ given by stacking $\xi_0$ and $\xi_1$, if this contact structure is tight; otherwise it is $\0_{\Gamma_0,\Gamma_2}$.
\item
The composition of two morphisms $X \stackrel{f}{\To} Y \stackrel{g}{\To} Z$, where at least one of $f,g$ is a zero morphism, is $\0_{X,Z}$.
\end{enumerate}
\end{enumerate}
\end{defn}

\begin{defn}
The \emph{contact category} of a marked surface $(\Sigma,F)$, denoted $\Cat(\Sigma,F)$, is the disjoint union of the $\Cat_e (\Sigma,F)$.
\end{defn}
Nonzero objects or morphisms are also referred to as \emph{tight}. Following the standard abuse, we obscure the distinction between contact structures and their equivalence classes; hopefully no confusion will result.

Thus, the objects and morphisms are dividing sets and contact structures, but everything overtwisted becomes zero. Indeed $\Cat(\Sigma,F)$ is the ``quotient", in an appropriate sense, of an ``unreduced" contact category containing all contact structures, by an overtwisted subcategory; see \cite{Me16ContactCategories}.

Given a quadrangulation $Q$ of $(\Sigma,F,V)$, the basic dividing sets, being tight, form a subset of the tight objects of $\Cat(\Sigma,F)$; we call them \emph{basic objects} with respect to $Q$.
\begin{defn}
Let $(\Sigma,F,Q)$ be a quadrangulated marked surface.

If $e$ is an integer such that $e \equiv I(\Sigma,F)$ mod $2$ and $|e| \leq I(\Sigma,F)$, the \emph{(basic) contact category} of $(\Sigma,Q)$ of Euler class $e$, denoted $\Cat_e (\Sigma,Q)$, is the full subcategory of $\Cat_e (\Sigma,F)$ on the basic objects with respect to $Q$. 

The \emph{(basic) contact category} of $(\Sigma,Q)$, denoted $\Cat(\Sigma,Q)$, is the union of the $\Cat_e (\Sigma,Q)$.
\end{defn}

Given a category $\Cat$ and a base ring $R$ (commutative with $1$), we may form an $R$-algebra $R \Cat$, called the \emph{category algebra}, as follows. As an $R$-module, $R \Cat$ is free with basis given by the morphisms of $\Cat$. The product of two basis elements $f,g$ (morphisms) is then defined to be their composition in $\Cat$, if it is well-defined (i.e. $f,g$ are composable); otherwise the product is defined to be zero. Extending by linearity we obtain an associative $R$-algebra. 

For each object $X$ of $\Cat$, the identity morphism $1_X$ is an idempotent in $R \Cat$. For distinct objects $X,Y$, we have $1_X 1_Y = 1_Y 1_X = 0$, so these idempotents are orthogonal. If $\Cat$ has finitely many objects, then the sum of all identity morphisms is a multiplicative identity element of $R \Cat$. For two objects $X, Y$, the $R$-submodule $1_X R \Cat 1_Y$ has basis the morphisms $X \to Y$, and we have the decomposition $R\Cat = \bigoplus_{X,Y \in \Ob(\Cat)} 1_X \; R\Cat \; 1_Y$.
If $\Cat = \sqcup_e \Cat_e$ is a disjoint union of subcategories then we obtain $R \Cat = \bigoplus_e R \Cat_e$. See \cite{Webb07} for details. 

If $\Cat$ has some morphisms designated \emph{zero} morphisms, such that any composition of morphisms involving a zero morphism is also a zero morphism, then the $R$-submodule of $R\Cat$ generated by zero morphisms is a two-sided ideal, so we can take a quotient of $R\Cat$ by this ideal. The zero morphisms in fact become $0$ in the quotient algebra. 

We will be interested in the case when $R = \Z_2$ and $\Cat = \Cat(\Sigma,Q)$ or $\Cat_e (\Sigma, Q)$. On the quadrangulated surface $(\Sigma, Q)$ there are only finitely many (indeed precisely $2^{I(\Sigma,F)}$) basic dividing sets, hence $\Cat (\Sigma,Q)$ has finitely many objects. And for any basic $\Gamma_0, \Gamma_1$, there are only finitely many isotopy classes of tight contact structures on $M(\Gamma_0, \Gamma_1)$: this follows from the decomposition into cubes which we will discuss as we proceed, or see \cite{Colin_Giroux_Honda03, Colin_Giroux_Honda09} for general finiteness results. In any case, $\Cat(\Sigma,Q)$ contains finitely many morphisms, so $\Z_2 \Cat(\Sigma,Q)$ is finitely generated over $\Z_2$ and has a multiplicative identity. The overtwisted/zero morphisms have the property that any composition involving a zero morphism is also zero, so we make the following definition.
\begin{defn}
Let $(\Sigma,F,Q)$ be a quadrangulated marked surface and $e$ an integer such that $e \equiv I(\Sigma,F)$ mod $2$ and $|e| \leq I(\Sigma,F)$.
\begin{enumerate}
\item
The \emph{contact category algebra} of $(\Sigma,Q)$ of Euler class $e$, denoted $CA_e (\Sigma,Q)$, is the quotient of $\Z_2 \Cat_e (\Sigma,Q)$ by the ideal generated by overtwisted morphisms.
\item
The \emph{contact category algebra} of $(\Sigma,Q)$, denoted $CA (\Sigma,Q)$, is the quotient of $\Z_2 \Cat (\Sigma,Q)$ by the ideal generated by overtwisted morphisms.
\end{enumerate}
\end{defn}
Alternatively, $CA(\Sigma,Q)$ can be defined as the direct sum of the $\Z_2$-algebras $CA_e (\Sigma,Q)$.

The contact category algebra $CA(\Sigma,Q)$ is the algebra appearing in theorem \ref{thm:main_thm}. 

\subsection{From quadrangulations to cubulations}
\label{sec:cubulated_contact_structures}

Let $(\Sigma, Q)$ be a quadrangulated surface, with decomposing arcs $A_1, \ldots, A_j$ and squares $Q_1, \ldots, Q_k$. Then $\Sigma \times [0,1]$ is a union of \emph{cubes} $Q_i \times [0,1]$. Each cube $Q_i \times [0,1]$ has top and bottom boundary a square, and side boundary consisting of four square faces. Some of the side faces of cubes are part of the boundary of $\Sigma \times [0,1]$; others (namely the $A_i \times [0,1]$) are glued in pairs. We refer to this decomposition into cubes as the \emph{cubulation} of $\Sigma \times [0,1]$ corresponding to the quadrangulation $Q$, and denote it by $Q \times [0,1]$; we denote the \emph{cubulated 3-manifold} by $(\Sigma \times [0,1], Q \times [0,1])$ or $(\Sigma, Q) \times [0,1]$. Just as we can refer to the quadrangulation $Q$ by its arcs $A_i$ or squares $Q_i$, we can refer to the corresponding cubulation $Q \times [0,1]$ by its glued faces $A_i \times [0,1]$ or cubes $Q_i \times [0,1]$. Obviously there are many ways to glue cubes together to obtain a 3-manifold, but for present purposes a cubulation refers only to a $(\Sigma, Q) \times [0,1]$ for some quadrangulation $Q$ of $\Sigma$. For us, cubulations are just thickened quadrangulations.

We will think of our cubes as having convex boundary, but we will need to round and sharpen various corners. By default, when we refer to a cube, we actually mean a \emph{rounded cube}. If we Legendrian realise and then make a corner along the boundary of one of the six faces, we say the cube has that face \emph{sharpened}. We can sharpen any single face, and we can simultaneously sharpen two opposite faces, such as the top and bottom faces, but we will not sharpen any adjacent faces simultaneously.

A rounded cube is of course smooth and so it makes sense to speak of a smooth dividing set on its boundary. On each square face then we may draw the curves of a standard dividing set; joining these curves up across the rounded edges, we obtain a dividing set on the rounded cube. We call a cube with such a dividing set a \emph{standard cube}, or a cube with a \emph{standard dividing set}. If some faces of such a cube are sharpened, we still refer to the cube as standard. Note that adjacent vertices (or what remains of them after rounding) have opposite signs with respect to the dividing set.

When we sharpen a face of a standard cube, we always do so in such a way that the top and bottom dividing sets appear standard. The effect is shown in figure \ref{fig:cube_sharpening}.

\begin{figure}
\begin{center}
\begin{tikzpicture}[mypersp, scale=2]
\draw [yzp=1, rounded corners=5mm] (0,0) -- (0,1) -- (1,1) -- (1,0) -- cycle;
\draw [yzp=0, rounded corners=5mm] (0,0) -- (0,1) -- (1,1) -- (1,0) -- cycle;
\draw [xzp=1, rounded corners=5mm] (0,0) -- (0,1) -- (1,1) -- (1,0) -- cycle;
\draw [xzp=0, rounded corners=5mm] (0,0) -- (0,1) -- (1,1) -- (1,0) -- cycle;
\draw [xyp=1, rounded corners=5mm] (0,0) -- (0,1) -- (1,1) -- (1,0) -- cycle;
\draw [xyp=0, rounded corners=5mm] (0,0) -- (0,1) -- (1,1) -- (1,0) -- cycle;
\bottomoff
\backclean
\leftclean
\frontclean
\draw [red, ultra thick, yzp=1] (0.5,0) arc (0:90:0.5);
\draw [red, ultra thick, yzp=1] (0.5,1) arc (-180:-90:0.5);
\topon
\end{tikzpicture}
\begin{tikzpicture}[mypersp, scale=2]
\draw [xyp=1, rounded corners=5mm] (0,0) -- (0,1) -- (1,1) -- (1,0) -- cycle;
\draw [xyp=0, rounded corners=5mm] (0,0) -- (0,1) -- (1,1) -- (1,0) -- cycle;
\draw [xzp=1] (0.2,0) -- (0.2,1);
\draw [xzp=1] (0.9,-0.1) -- (0.9,0.9);
\draw [xzp=0] (0.1,0.1) -- (0.1,1.1);
\draw [xzp=0] (0.95,0.05) -- (0.95,1.05);
\draw [red, ultra thick, xzp=0] (0.1,0.1) -- (0.1,1.1);
\draw [red, ultra thick, densely dotted, xzp=1] (0.2,0) -- (0.2,1);
\draw [red, ultra thick, yzp=1] (0.3,0) to [bend left=90] (0.7,0);
\draw [red, ultra thick, yzp=1] (0.3,1) to [bend right=90] (0.7,1);
\topon
\bottomoff
\end{tikzpicture}
\begin{tikzpicture}[mypersp, scale=2]
\draw [yzp=1, rounded corners=5mm] (-0.05,-0.1) -- (-0.05,1.1) -- (1.05,1.1) -- (1.05,-0.1) -- cycle;
\draw [yzp=0, rounded corners=5mm] (0,0) -- (0,1) -- (1,1) -- (1,0) -- cycle;
\draw [xzp=1, rounded corners=5mm] (1,1) -- (0,1) -- (0,0) -- (1,0);
\draw [xzp=0, rounded corners=5mm] (1,1) -- (0,1) -- (0,0) -- (1,0);
\draw [xyp=1, rounded corners=5mm] (1,1) -- (0,1) -- (0,0) -- (1,0);
\draw [xyp=0, rounded corners=5mm] (1,1) -- (0,1) -- (0,0) -- (1,0);
\draw [red, ultra thick, xyp=1] (0.5,0) arc (180:90:0.4);
\draw [red, ultra thick, xyp=1] (0.5,1) arc (0:-90:0.5);
\draw [red, ultra thick, densely dotted, xyp=0] (0.5,0) arc (0:90:0.5);
\draw [red, ultra thick, densely dotted, xyp=0] (0.5,1) arc (-180:-90:0.55);
\leftclean
\backclean
\draw [red, ultra thick, xzp=0] (0.5,0) arc (180:90:0.48);
\draw [red, ultra thick, xzp=0] (0.5,1) arc (0:-90:0.5);
\draw [red, ultra thick, yzp=1] (0,0) to [bend left=45] (1,0);
\draw [red, ultra thick, yzp=1] (0,1) to [bend right=45] (1,1);
\end{tikzpicture}

\caption{Left: A standard rounded cube. Centre: Top and bottom faces are sharpened. Right: The right side face is sharpened.}
\label{fig:cube_sharpening}
\end{center}
\end{figure}
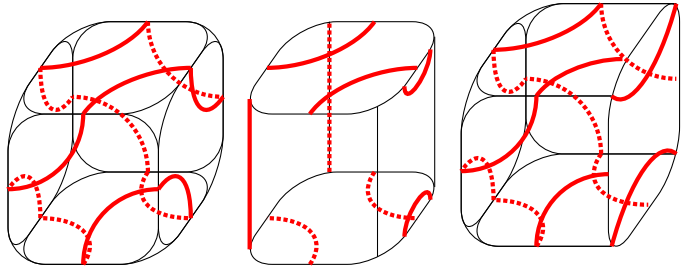

A rounded standard cube has a dividing set on its boundary sphere, which may or may not be connected, or equivalently, tight; we will investigate which standard cubes are tight in section \ref{sec:tightness_of_cubes}. Figure \ref{fig:more_cube_examples} shows some possibilities. Since it is topologically a 3-ball, a standard cube with tight dividing set has a unique isotopy class of tight contact structure \cite{ElMartinet}. 

\begin{defn}
Let $(\Sigma,Q)$ be a quadrangulated surface, so that $(\Sigma, Q) \times [0,1]$ is a cubulated 3-manifold. A \emph{cubulated contact structure} on $(\Sigma,Q) \times [0,1]$ is a contact structure such that each cube $Q_i \times [0,1]$ has a standard dividing set.
\end{defn}
Since the top face of a cube in a cubulated contact structure is a square with a standard dividing set, the dividing set on $\Sigma \times \{1\}$ is basic with respect to $Q$; and since the bottom faces are also standard squares, the dividing set on $\Sigma \times \{0\}$ is also basic with respect to $Q$.

There is a technical issue with this definition, which should be mentioned. Each $Q_i \times [0,1]$ is a bona fide cube with 12 corner edges and 8 corner vertices, but in our scheme of rounding and sharpening faces of cubes we only allow sharpening along simple closed curves. However, we can successively round corners as we decompose $\Sigma \times [0,1]$, so that in the end every cube is rounded, and we can make sense of a dividing set being standard. Precisely, we begin by rounding the corners $\partial \Sigma \times \{0,1\}$ of $\Sigma \times [0,1]$. Then we can cut along a rounded $Q_i \times [0,1]$ and obtain a manifold with two corners along the smooth curves given by the rounded boundary of $Q_i \times [0,1]$. We round these corners, and cut again. By rounding corners at each stage, we eventually  obtain rounded cubes. This process is illustrated in figure \ref{fig:cube_gluing}.

As it turns out, \emph{all} tight contact structures with basic dividing sets are cubulated.
\begin{lem}
\label{lem:all_contact_strs_cubulated}
Let $(\Sigma, Q)$ be a quadrangulated surface, let $\Gamma_0, \Gamma_1$ be basic dividing sets, and let $\xi$ be a tight contact structure on $M(\Gamma_0, \Gamma_1)$. Then $\xi$ is isotopic to a cubulated contact structure on $(\Sigma,Q) \times [0,1]$ where every cube has a tight contact structure.
\end{lem}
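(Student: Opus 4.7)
The plan is to use convex surface decomposition, cutting $\Sigma \times [0,1]$ along the vertical rectangles $A_i \times [0,1]$, where $A_1, \ldots, A_j$ are the decomposing arcs of $Q$. After a contact isotopy, each rectangle becomes convex with a dividing set matching a standard cube face; then cutting along all rectangles simultaneously produces the cube decomposition, and tightness of $\xi$ ensures each cube is tight.

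The first step is to Legendrian-realize the arcs $A_i \times \{1\}$ on the convex top face $\Sigma \times \{1\}$ (with dividing set $\Gamma_1$), and similarly $A_i \times \{0\}$ on the bottom. Because $\Gamma_1$ is basic with respect to $Q$, it restricts to the standard dividing set on each square, and each $A_i$ meets $\Gamma_1$ transversely in a single point. Each component of $\Sigma \setminus (\Gamma_1 \cup \bigcup_i A_i)$ is a corner triangle or middle region inside a standard square, hence meets a dividing arc; the nonisolating condition holds and the Legendrian realisation principle applies to the full 1-submanifold $\bigcup_i (A_i \times \{1\})$ simultaneously. The vertical side arcs $\{v_\pm\} \times [0,1]$ may be taken Legendrian as they coincide with components of the dividing set of the convex side surface $\partial \Sigma \times [0,1]$.

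Next, I would isotope each $A_i \times [0,1]$ to be convex rel its Legendrian boundary. The dividing set on this convex rectangle has one endpoint on the top edge from $\Gamma_1 \cap A_i$, one on the bottom from $\Gamma_0 \cap A_i$, and (after rounding) matches the side's vertical dividing curve at the left and right corners. The main obstacle is to simplify this dividing set into the standard form compatible with a cube face. Tightness of $\xi$ rules out contractible closed components on the rectangle immediately. Any extra $\partial$-parallel arcs correspond to bypass half-discs, and the key point is that each such bypass can either be pushed off into an adjacent cube without affecting tightness globally, or else would yield an overtwisted disc in $\xi$, contradicting the hypothesis. Keeping careful bypass bookkeeping across all the rectangles simultaneously is the crux of the argument.

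Once every rectangle carries a standard dividing set, cutting along them all decomposes $\Sigma \times [0,1]$ into the cubes $Q_i \times [0,1]$. Each cube has convex boundary with a standard dividing set on every face: standard squares on top and bottom (from $\Gamma_0, \Gamma_1$), vertical arcs on the two faces lying in $\partial \Sigma \times [0,1]$, and the standard arcs from the cut rectangles on the remaining two faces. Since $\xi$ is tight on the whole of $\Sigma \times [0,1]$, its restriction to each cube is tight, and by Eliashberg's uniqueness of tight contact structures on the $3$-ball with prescribed convex boundary \cite{ElMartinet}, each cube carries the unique tight contact structure compatible with its standard boundary dividing set. This produces the desired cubulated contact structure isotopic to $\xi$ in which every cube is tight.
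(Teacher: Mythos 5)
Your overall strategy --- cut along convex copies of the rectangles $A_i\times[0,1]$ and invoke Eliashberg's uniqueness on each resulting ball --- is the same as the paper's. But there is a genuine gap at what you yourself call the crux, and it hides the fact that no crux is needed.

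You assert that the convex rectangle $A_i\times[0,1]$ may carry ``extra $\partial$-parallel arcs'' which must be pushed off into adjacent cubes or shown to produce an overtwisted disc, and you leave this ``bypass bookkeeping'' unresolved. In fact this situation cannot arise, and the reason is exactly the boundary count you already wrote down: after rounding the corners $\partial\Sigma\times\{0,1\}$, the Legendrian boundary of the (rounded) rectangle meets the dividing set of $\partial M(\Gamma_0,\Gamma_1)$ in precisely four points (one from $\Gamma_1$, one from $\Gamma_0$, one from each side). Tightness of $\xi$ excludes closed components of the dividing set on the convex disc, so the dividing set consists of exactly two properly embedded disjoint arcs joining four boundary points. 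There are only two such configurations up to isotopy, and both are precisely the standard (used/unused) dividing sets on a cube face. So no simplification, bypass removal, or bookkeeping across rectangles is needed; your proposal neither notices this nor supplies the argument it promises in its place.

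A secondary problem is your treatment of the vertical edges $\{v_\pm\}\times[0,1]$: you propose to take them Legendrian ``as they coincide with components of the dividing set of the convex side surface.'' The Legendrian realisation principle requires the curve to be \emph{transverse} to the dividing set, and a curve lying inside the dividing set cannot be used as the Legendrian boundary of a cutting surface in the way you need. This is exactly why the paper rounds the corners of $M(\Gamma_0,\Gamma_1)$ first: after rounding, $\partial(A_i\times[0,1])$ becomes transverse to the dividing set and meets it in the four points that make the rest of the argument work.
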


\begin{proof}
As $\Gamma_0, \Gamma_1$ are tight, each cube of $Q \times [0,1]$ has a standard dividing set on its top and bottom faces. Rounding the corners $\partial \Sigma \times \{0,1\}$ of $M(\Gamma_0, \Gamma_1)$, the vertical dividing set on the side boundary of $M(\Gamma_0, \Gamma_1)$ naturally provides each unglued side face with a standard dividing set, as illustrated in figure \ref{fig:vertical_side_boundary}.

\begin{figure}
\begin{center}
\begin{tikzpicture}[mypersp, scale=2]
\draw [xyp=1, rounded corners=5mm] (0,0) -- (0,1) -- (2,1) -- (2,0) -- cycle;
\draw [xyp=0, rounded corners=5mm] (0,0) -- (0,1) -- (2,1) -- (2,0) -- cycle;
\draw [yzp=1] (0,0) -- (0,1) -- (1,1) -- (1,0) -- cycle;
\draw [red, ultra thick, densely dotted, xzp=1] (0.2,0) -- (0.2,1);
\draw [red, ultra thick, densely dotted, xzp=1] (1,0) -- (1,1);
\draw [red, ultra thick, xzp=1] (1.9,-0.1) -- (1.9,0.9); 
\draw [red, ultra thick, xzp=0] (1.95,0.05) -- (1.95,1.05);
\draw [red, ultra thick, xzp=0] (0.1,0.1) -- (0.1,1.1);
\draw [red, ultra thick, xzp=0] (1,0) -- (1,1);
\draw [red, ultra thick, densely dotted, xyp=0] (0.5,0) -- (0.5,0.3);
\draw [red, ultra thick, densely dotted, xyp=0] (0,0.5) -- (0.3,0.5);
\draw [red, ultra thick, densely dotted, xyp=0] (0.5,0.7) -- (0.5,1);
\draw [red, ultra thick, xyp=1] (0.5,0) -- (0.5,0.3);
\draw [red, ultra thick, xyp=1] (0,0.5) -- (0.3,0.5);
\draw [red, ultra thick, xyp=1] (0.5,0.7) -- (0.5,1);
\begin{scope}[xshift=1cm]
\draw [red, ultra thick, densely dotted, xyp=0] (0.5,0.7) -- (0.5,1);
\draw [red, ultra thick, densely dotted, xyp=0] (0.7,0.5) -- (1,0.5);
\draw [red, ultra thick, densely dotted, xyp=0] (0.5,0) -- (0.5,0.3);
\draw [red, ultra thick, xyp=1] (0.5,0.7) -- (0.5,1);
\draw [red, ultra thick, xyp=1] (0.7,0.5) -- (1,0.5);
\draw [red, ultra thick, xyp=1] (0.5,0) -- (0.5,0.3);
\end{scope}
\draw [<->] (2.2,1.5) -- (3,1.5);
\begin{scope}[xshift=4cm]
\draw [yzp=1, rounded corners=5mm] (-0.05,-0.1) -- (-0.05,1.1) -- (1.05,1.1) -- (1.05,-0.1) -- cycle;
\draw [yzp=0, rounded corners=5mm] (0,0) -- (0,1) -- (1,1) -- (1,0) -- cycle;
\draw [xzp=1, rounded corners=5mm] (1,1) -- (0,1) -- (0,0) -- (1,0);
\draw [xzp=0, rounded corners=5mm] (1,1) -- (0,1) -- (0,0) -- (1,0);
\draw [xyp=1, rounded corners=5mm] (1,1) -- (0,1) -- (0,0) -- (1,0);
\draw [xyp=0, rounded corners=5mm] (1,1) -- (0,1) -- (0,0) -- (1,0);
\draw [yzp=2, rounded corners=5mm] (0,0) -- (0,1) -- (1,1) -- (1,0) -- cycle;
\draw [xzp=1, rounded corners=5mm] (1,1) -- (2,1) -- (2,0) -- (1,0);
\draw [xzp=0, rounded corners=5mm] (1,1) -- (2,1) -- (2,0) -- (1,0);
\draw [xyp=1, rounded corners=5mm] (1,1) -- (2,1) -- (2,0) -- (1,0);
\draw [xyp=0, rounded corners=5mm] (1,1) -- (2,1) -- (2,0) -- (1,0);
\backclean
\leftclean
\frontclean
\draw [red, ultra thick, densely dotted, xyp=0] (0.5,0) -- (0.5,0.3);
\draw [red, ultra thick, densely dotted, xyp=0] (0,0.5) -- (0.3,0.5);
\draw [red, ultra thick, densely dotted, xyp=0] (0.5,0.7) -- (0.5,1);
\draw [red, ultra thick, xyp=1] (0.5,0) -- (0.5,0.3);
\draw [red, ultra thick, xyp=1] (0,0.5) -- (0.3,0.5);
\draw [red, ultra thick, xyp=1] (0.5,0.7) -- (0.5,1);
\end{scope}
\begin{scope}[xshift=5cm]
\backclean
\rightclean
\frontclean
\draw [red, ultra thick, densely dotted, xyp=0] (0.5,0.7) -- (0.5,1);
\draw [red, ultra thick, densely dotted, xyp=0] (0.7,0.5) -- (1,0.5);
\draw [red, ultra thick, densely dotted, xyp=0] (0.5,0) -- (0.5,0.3);
\draw [red, ultra thick, xyp=1] (0.5,0.7) -- (0.5,1);
\draw [red, ultra thick, xyp=1] (0.7,0.5) -- (1,0.5);
\draw [red, ultra thick, xyp=1] (0.5,0) -- (0.5,0.3);
\end{scope}
\end{tikzpicture}
\caption{Rounding corners, the vertical dividing set on the side boundary of $M(\Gamma_0, \Gamma_1)$ becomes a standard unused dividing set on each side face.}
\label{fig:vertical_side_boundary}
\end{center}
\end{figure}
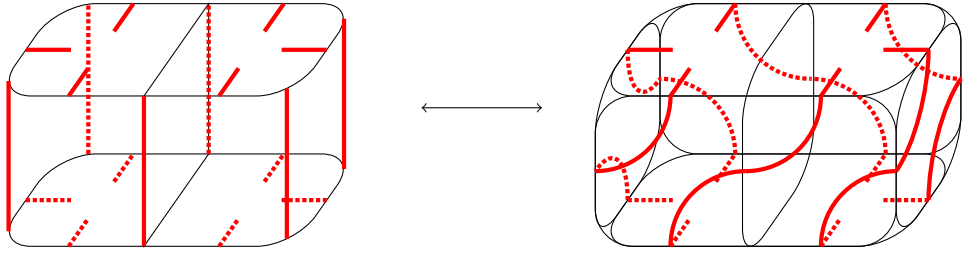

Consider a decomposing arc $A$; so $A \times I$ is a glued face of the cubulation. After rounding corners, $\partial (A \times I)$ intersects the dividing set on $\partial M(\Gamma_0, \Gamma_1)$ in four points, so can be Legendrian realised. Then $A \times I$ (suitably rounded), after a small isotopy, can be made convex. The dividing set on (rounded) $A \times I$ has four endpoints and, as $\xi$ is tight, must consists of two arcs. After cutting along this surface and rounding edges, as in figure \ref{fig:cube_gluing}, we have standard dividing sets on the resulting side faces.

Proceeding in this way, we cut $M(\Gamma_0, \Gamma_1)$ along rounded surfaces, close to $A \times I$, into a collection of standard cubes. As $\xi$ is tight, each standard cube has a tight contact structure. 
\end{proof}

A priori, a cubulated contact structure might be overtwisted, even though all cubes have tight contact structures. However, as we will see next, the tightness of each cube implies the tightness of the entire contact structure.

We have seen how a cubulated $(\Sigma, Q) \times [0,1]$ can be cut into standard cubes, as illustrated in figure \ref{fig:cube_gluing}, read from bottom to top. If we read it from top to bottom, reversing the process, we have a recipe for gluing faces of standard cubes.

To glue two rounded standard cubes along a pair of faces, we sharpen those faces, and then glue them together, identifying dividing sets, so as to obtain a manifold with smooth boundary.

As illustrated at the top of figure \ref{fig:cube_gluing}, two faces of rounded standard cubes can be glued together if and only if the dividing sets on those faces appear to \emph{disagree!} Effectively they differ by a $90^\circ$ rotation, but in the sharpening process the dividing sets are each shifted $45^\circ$, so disagreement of dividing sets on a rounded cube implies agreement once those faces are sharpened. If we are looking at rounded standard cubes, we say two sides can be \emph{validly identified} if their dividing sets precisely disagree in this way; sharpening the faces and gluing, we call a \emph{valid identification} of faces.

\begin{figure}
\begin{center}
\begin{tikzpicture}[mypersp, scale=1.5]
\draw [yzp=1, rounded corners=5mm] (0,0) -- (0,1) -- (1,1) -- (1,0) -- cycle;
\draw [yzp=0, rounded corners=5mm] (0,0) -- (0,1) -- (1,1) -- (1,0) -- cycle;
\draw [xzp=1, rounded corners=5mm] (0,0) -- (0,1) -- (1,1) -- (1,0) -- cycle;
\draw [xzp=0, rounded corners=5mm] (0,0) -- (0,1) -- (1,1) -- (1,0) -- cycle;
\draw [xyp=1, rounded corners=5mm] (0,0) -- (0,1) -- (1,1) -- (1,0) -- cycle;
\draw [xyp=0, rounded corners=5mm] (0,0) -- (0,1) -- (1,1) -- (1,0) -- cycle;
\draw [red, ultra thick, densely dotted, xyp=0] (1,0.5) -- (0.7,0.5);
\draw [red, ultra thick, xyp=1] (1,0.5) -- (0.7,0.5);
\draw [red, ultra thick, xzp=0] (1,0.5) -- (0.7,0.5);
\draw [red, ultra thick, densely dotted, xzp=1] (1,0.5) -- (0.7,0.5);
\fill [gray!30!white, draw=none, opacity=0.5, yzp=1, rounded corners=5mm] (0,0) -- (0,1) -- (1,1) -- (1,0) -- cycle;
\draw [red, ultra thick, yzp=1] (0.5,0) arc (180:90:0.5);
\draw [red, ultra thick, yzp=1] (0.5,1) arc (0:-90:0.5);
\begin{scope}[xshift=2 cm]
\fill [gray!30!white, draw=none, opacity=0.5, yzp=0, rounded corners=5mm] (0,0) -- (0,1) -- (1,1) -- (1,0) -- cycle;
\draw [yzp=1, rounded corners=5mm] (0,0) -- (0,1) -- (1,1) -- (1,0) -- cycle;
\draw [yzp=0, rounded corners=5mm] (0,0) -- (0,1) -- (1,1) -- (1,0) -- cycle;
\draw [xzp=1, rounded corners=5mm] (0,0) -- (0,1) -- (1,1) -- (1,0) -- cycle;
\draw [xzp=0, rounded corners=5mm] (0,0) -- (0,1) -- (1,1) -- (1,0) -- cycle;
\draw [xyp=1, rounded corners=5mm] (0,0) -- (0,1) -- (1,1) -- (1,0) -- cycle;
\draw [xyp=0, rounded corners=5mm] (0,0) -- (0,1) -- (1,1) -- (1,0) -- cycle;
\draw [red, ultra thick, densely dotted, xyp=0] (0,0.5) -- (0.3,0.5);
\draw [red, ultra thick, densely dotted, xzp=1] (0,0.5) -- (0.3,0.5);
\draw [red, ultra thick, xyp=1] (0,0.5) -- (0.3,0.5);
\draw [red, ultra thick, xzp=0] (0,0.5) -- (0.3,0.5);
\draw [red, ultra thick, yzp=0] (0.5,0) arc (0:90:0.5);
\draw [red, ultra thick, yzp=0] (0.5,1) arc (-180:-90:0.5);
\end{scope}
\begin{scope}[xshift = 5 cm]
\draw [yzp=1, rounded corners=5mm] (0,0) -- (0,1) -- (1,1) -- (1,0) -- cycle;
\draw [yzp=0, rounded corners=5mm] (0,0) -- (0,1) -- (1,1) -- (1,0) -- cycle;
\draw [xzp=1, rounded corners=5mm] (0,0) -- (0,1) -- (1,1) -- (1,0) -- cycle;
\draw [xzp=0, rounded corners=5mm] (0,0) -- (0,1) -- (1,1) -- (1,0) -- cycle;
\draw [xyp=1, rounded corners=5mm] (0,0) -- (0,1) -- (1,1) -- (1,0) -- cycle;
\draw [xyp=0, rounded corners=5mm] (0,0) -- (0,1) -- (1,1) -- (1,0) -- cycle;
\draw [red, ultra thick, densely dotted, xyp=0] (1,0.5) -- (0.7,0.5);
\draw [red, ultra thick, xyp=1] (1,0.5) -- (0.7,0.5);
\draw [red, ultra thick, xzp=0] (1,0.5) -- (0.7,0.5);
\draw [red, ultra thick, densely dotted, xzp=1] (1,0.5) -- (0.7,0.5);
\fill [gray!30!white, draw=none, opacity=0.5, yzp=1, rounded corners=5mm] (0,0) -- (0,1) -- (1,1) -- (1,0) -- cycle;
\draw [red, ultra thick, yzp=1] (0.5,0) arc (0:90:0.5);
\draw [red, ultra thick, yzp=1] (0.5,1) arc (-180:-90:0.5);
\end{scope}
\begin{scope}[xshift = 7 cm]
\fill [gray!30!white, draw=none, opacity=0.5, yzp=0, rounded corners=5mm] (0,0) -- (0,1) -- (1,1) -- (1,0) -- cycle;
\draw [yzp=1, rounded corners=5mm] (0,0) -- (0,1) -- (1,1) -- (1,0) -- cycle;
\draw [yzp=0, rounded corners=5mm] (0,0) -- (0,1) -- (1,1) -- (1,0) -- cycle;
\draw [xzp=1, rounded corners=5mm] (0,0) -- (0,1) -- (1,1) -- (1,0) -- cycle;
\draw [xzp=0, rounded corners=5mm] (0,0) -- (0,1) -- (1,1) -- (1,0) -- cycle;
\draw [xyp=1, rounded corners=5mm] (0,0) -- (0,1) -- (1,1) -- (1,0) -- cycle;
\draw [xyp=0, rounded corners=5mm] (0,0) -- (0,1) -- (1,1) -- (1,0) -- cycle;
\draw [red, ultra thick, densely dotted, xyp=0] (0,0.5) -- (0.3,0.5);
\draw [red, ultra thick, densely dotted, xzp=1] (0,0.5) -- (0.3,0.5);
\draw [red, ultra thick, xyp=1] (0,0.5) -- (0.3,0.5);
\draw [red, ultra thick, xzp=0] (0,0.5) -- (0.3,0.5);
\draw [red, ultra thick, yzp=0] (0.5,0) arc (180:90:0.5);
\draw [red, ultra thick, yzp=0] (0.5,1) arc (0:-90:0.5);
\end{scope}

\begin{scope}[yshift = -2 cm]
\draw [yzp=1, rounded corners=5mm] (-0.05,-0.1) -- (-0.05,1.1) -- (1.05,1.1) -- (1.05,-0.1) -- cycle;
\draw [yzp=0, rounded corners=5mm] (0,0) -- (0,1) -- (1,1) -- (1,0) -- cycle;
\draw [xzp=1, rounded corners=5mm] (1,1) -- (0,1) -- (0,0) -- (1,0);
\draw [xzp=0, rounded corners=5mm] (1,1) -- (0,1) -- (0,0) -- (1,0);
\draw [xyp=1, rounded corners=5mm] (1,1) -- (0,1) -- (0,0) -- (1,0);
\draw [xyp=0, rounded corners=5mm] (1,1) -- (0,1) -- (0,0) -- (1,0);
\draw [red, ultra thick, densely dotted, xyp=0] (1,0.5) -- (0.7,0.5);
\draw [red, ultra thick, xyp=1] (1,0.5) -- (0.7,0.5);
\draw [red, ultra thick, xzp=0] (1,0.5) -- (0.7,0.5);
\draw [red, ultra thick, densely dotted, xzp=1] (1,0.5) -- (0.7,0.5);
\fill [gray!30!white, draw=none, yzp=1, opacity=0.5, rounded corners=5mm] (-0.05,-0.1) -- (-0.05,1.1) -- (1.05,1.1) -- (1.05,-0.1) -- cycle;
\draw [red, ultra thick, yzp=1] (0,0) to [bend right=45] (0,1);
\draw [red, ultra thick, yzp=1] (1,0) to [bend left=45] (1,1);
\end{scope}
\begin{scope}[yshift = -2 cm, xshift = 2 cm]
\fill [gray!30!white, draw=none, yzp=0, opacity=0.5, rounded corners=5mm] (-0.05,-0.1) -- (-0.05,1.1) -- (1.05,1.1) -- (1.05,-0.1) -- cycle;
\draw [yzp=0, rounded corners=5mm] (-0.05,-0.1) -- (-0.05,1.1) -- (1.05,1.1) -- (1.05,-0.1) -- cycle;
\draw [yzp=1, rounded corners=5mm] (0,0) -- (0,1) -- (1,1) -- (1,0) -- cycle;
\draw [xzp=1, rounded corners=5mm] (0,1) -- (1,1) -- (1,0) -- (0,0);
\draw [xzp=0, rounded corners=5mm] (0,1) -- (1,1) -- (1,0) -- (0,0);
\draw [xyp=1, rounded corners=5mm] (0,1) -- (1,1) -- (1,0) -- (0,0);
\draw [xyp=0, rounded corners=5mm] (0,1) -- (1,1) -- (1,0) -- (0,0);
\draw [red, ultra thick, densely dotted, xyp=0] (0,0.5) -- (0.3,0.5);
\draw [red, ultra thick, xyp=1] (0,0.5) -- (0.3,0.5);
\draw [red, ultra thick, xzp=0] (0,0.5) -- (0.3,0.5);
\draw [red, ultra thick, densely dotted, xzp=1] (0,0.5) -- (0.3,0.5);
\draw [red, ultra thick, yzp=0] (0,0) to [bend right=45] (0,1);
\draw [red, ultra thick, yzp=0] (1,0) to [bend left=45] (1,1);
\end{scope}
\begin{scope}[yshift = -2cm, xshift = 5cm]
\draw [yzp=1, rounded corners=5mm] (-0.05,-0.1) -- (-0.05,1.1) -- (1.05,1.1) -- (1.05,-0.1) -- cycle;
\draw [yzp=0, rounded corners=5mm] (0,0) -- (0,1) -- (1,1) -- (1,0) -- cycle;
\draw [xzp=1, rounded corners=5mm] (1,1) -- (0,1) -- (0,0) -- (1,0);
\draw [xzp=0, rounded corners=5mm] (1,1) -- (0,1) -- (0,0) -- (1,0);
\draw [xyp=1, rounded corners=5mm] (1,1) -- (0,1) -- (0,0) -- (1,0);
\draw [xyp=0, rounded corners=5mm] (1,1) -- (0,1) -- (0,0) -- (1,0);
\draw [red, ultra thick, densely dotted, xyp=0] (1,0.5) -- (0.7,0.5);
\draw [red, ultra thick, xyp=1] (1,0.5) -- (0.7,0.5);
\draw [red, ultra thick, xzp=0] (1,0.5) -- (0.7,0.5);
\draw [red, ultra thick, densely dotted, xzp=1] (1,0.5) -- (0.7,0.5);
\fill [gray!30!white, draw=none, yzp=1, opacity=0.5, rounded corners=5mm] (-0.05,-0.1) -- (-0.05,1.1) -- (1.05,1.1) -- (1.05,-0.1) -- cycle;
\draw [red, ultra thick, yzp=1] (0,0) to [bend left=45] (1,0);
\draw [red, ultra thick, yzp=1] (0,1) to [bend right=45] (1,1);
\end{scope}
\begin{scope}[yshift = -2 cm, xshift = 7 cm]
\fill [gray!30!white, draw=none, yzp=0, opacity=0.5, rounded corners=5mm] (-0.05,-0.1) -- (-0.05,1.1) -- (1.05,1.1) -- (1.05,-0.1) -- cycle;
\draw [yzp=0, rounded corners=5mm] (-0.05,-0.1) -- (-0.05,1.1) -- (1.05,1.1) -- (1.05,-0.1) -- cycle;
\draw [yzp=1, rounded corners=5mm] (0,0) -- (0,1) -- (1,1) -- (1,0) -- cycle;
\draw [xzp=1, rounded corners=5mm] (0,1) -- (1,1) -- (1,0) -- (0,0);
\draw [xzp=0, rounded corners=5mm] (0,1) -- (1,1) -- (1,0) -- (0,0);
\draw [xyp=1, rounded corners=5mm] (0,1) -- (1,1) -- (1,0) -- (0,0);
\draw [xyp=0, rounded corners=5mm] (0,1) -- (1,1) -- (1,0) -- (0,0);
\draw [red, ultra thick, densely dotted, xyp=0] (0,0.5) -- (0.3,0.5);
\draw [red, ultra thick, xyp=1] (0,0.5) -- (0.3,0.5);
\draw [red, ultra thick, xzp=0] (0,0.5) -- (0.3,0.5);
\draw [red, ultra thick, densely dotted, xzp=1] (0,0.5) -- (0.3,0.5);
\draw [red, ultra thick, yzp=0] (0,0) to [bend left=45] (1,0);
\draw [red, ultra thick, yzp=0] (0,1) to [bend right=45] (1,1);
\end{scope}

\begin{scope}[yshift = -4 cm, xshift = 0.5 cm]
\draw [yzp=1, rounded corners=5mm] (-0.05,-0.1) -- (-0.05,1.1) -- (1.05,1.1) -- (1.05,-0.1) -- cycle;
\draw [yzp=0, rounded corners=5mm] (0,0) -- (0,1) -- (1,1) -- (1,0) -- cycle;
\draw [xzp=1, rounded corners=5mm] (1,1) -- (0,1) -- (0,0) -- (1,0);
\draw [xzp=0, rounded corners=5mm] (1,1) -- (0,1) -- (0,0) -- (1,0);
\draw [xyp=1, rounded corners=5mm] (1,1) -- (0,1) -- (0,0) -- (1,0);
\draw [xyp=0, rounded corners=5mm] (1,1) -- (0,1) -- (0,0) -- (1,0);
\draw [xzp=1, rounded corners=5mm] (1,1) -- (2,1) -- (2,0) -- (1,0);
\fill [gray!30!white, draw=none, yzp=1, opacity=0.5, rounded corners=5mm] (-0.05,-0.1) -- (-0.05,1.1) -- (1.05,1.1) -- (1.05,-0.1) -- cycle;
\draw [red, ultra thick, densely dotted, xyp=0] (1,0.5) -- (0.7,0.5);
\draw [red, ultra thick, xyp=1] (1,0.5) -- (0.7,0.5);
\draw [red, ultra thick, xzp=0] (1,0.5) -- (0.7,0.5);
\draw [red, ultra thick, densely dotted, xzp=1] (1,0.5) -- (0.7,0.5);
\draw [red, ultra thick, yzp=1] (0,0) to [bend right=45] (0,1);
\draw [red, ultra thick, yzp=1] (1,0) to [bend left=45] (1,1);
\draw [yzp=2, rounded corners=5mm] (0,0) -- (0,1) -- (1,1) -- (1,0) -- cycle;
\draw [xzp=0, rounded corners=5mm] (1,1) -- (2,1) -- (2,0) -- (1,0);
\draw [xyp=1, rounded corners=5mm] (1,1) -- (2,1) -- (2,0) -- (1,0);
\draw [xyp=0, rounded corners=5mm] (1,1) -- (2,1) -- (2,0) -- (1,0);
\draw [red, ultra thick, densely dotted, xyp=0] (1,0.5) -- (1.3,0.5);
\draw [red, ultra thick, xyp=1] (1,0.5) -- (1.3,0.5);
\draw [red, ultra thick, xzp=0] (1,0.5) -- (1.3,0.5);
\draw [red, ultra thick, densely dotted, xzp=1] (1,0.5) -- (1.3,0.5);
\end{scope}
\begin{scope}[yshift = -4 cm, xshift = 5.5 cm]
\draw [yzp=1, rounded corners=5mm] (-0.05,-0.1) -- (-0.05,1.1) -- (1.05,1.1) -- (1.05,-0.1) -- cycle;
\draw [yzp=0, rounded corners=5mm] (0,0) -- (0,1) -- (1,1) -- (1,0) -- cycle;
\draw [xzp=1, rounded corners=5mm] (1,1) -- (0,1) -- (0,0) -- (1,0);
\draw [xzp=0, rounded corners=5mm] (1,1) -- (0,1) -- (0,0) -- (1,0);
\draw [xyp=1, rounded corners=5mm] (1,1) -- (0,1) -- (0,0) -- (1,0);
\draw [xyp=0, rounded corners=5mm] (1,1) -- (0,1) -- (0,0) -- (1,0);
\draw [red, ultra thick, densely dotted, xzp=1] (1,0.5) -- (1.3,0.5);
\fill [gray!30!white, draw=none, yzp=1, opacity=0.5, rounded corners=5mm] (-0.05,-0.1) -- (-0.05,1.1) -- (1.05,1.1) -- (1.05,-0.1) -- cycle;
\draw [red, ultra thick, densely dotted, xyp=0] (1,0.5) -- (0.7,0.5);
\draw [red, ultra thick, xyp=1] (1,0.5) -- (0.7,0.5);
\draw [red, ultra thick, xzp=0] (1,0.5) -- (0.7,0.5);
\draw [red, ultra thick, densely dotted, xzp=1] (1,0.5) -- (0.7,0.5);
\draw [red, ultra thick, yzp=1] (0,0) to [bend left=45] (1,0);
\draw [red, ultra thick, yzp=1] (0,1) to [bend right=45] (1,1);
\draw [yzp=2, rounded corners=5mm] (0,0) -- (0,1) -- (1,1) -- (1,0) -- cycle;
\draw [xzp=1, rounded corners=5mm] (1,1) -- (2,1) -- (2,0) -- (1,0);
\draw [xzp=0, rounded corners=5mm] (1,1) -- (2,1) -- (2,0) -- (1,0);
\draw [xyp=1, rounded corners=5mm] (1,1) -- (2,1) -- (2,0) -- (1,0);
\draw [xyp=0, rounded corners=5mm] (1,1) -- (2,1) -- (2,0) -- (1,0);
\draw [red, ultra thick, densely dotted, xyp=0] (1,0.5) -- (1.3,0.5);
\draw [red, ultra thick, xyp=1] (1,0.5) -- (1.3,0.5);
\draw [red, ultra thick, xzp=0] (1,0.5) -- (1.3,0.5);
\end{scope}
\end{tikzpicture}
\caption{Cutting and gluing side faces of cubes. From top to bottom is the gluing process; from bottom to top, the cutting process. Left: the face is unused. Right: the face is used. Top: the faces are rounded. Middle: the faces are sharpened. Bottom: the faces are identified.}
\label{fig:cube_gluing}
\end{center}
\end{figure}
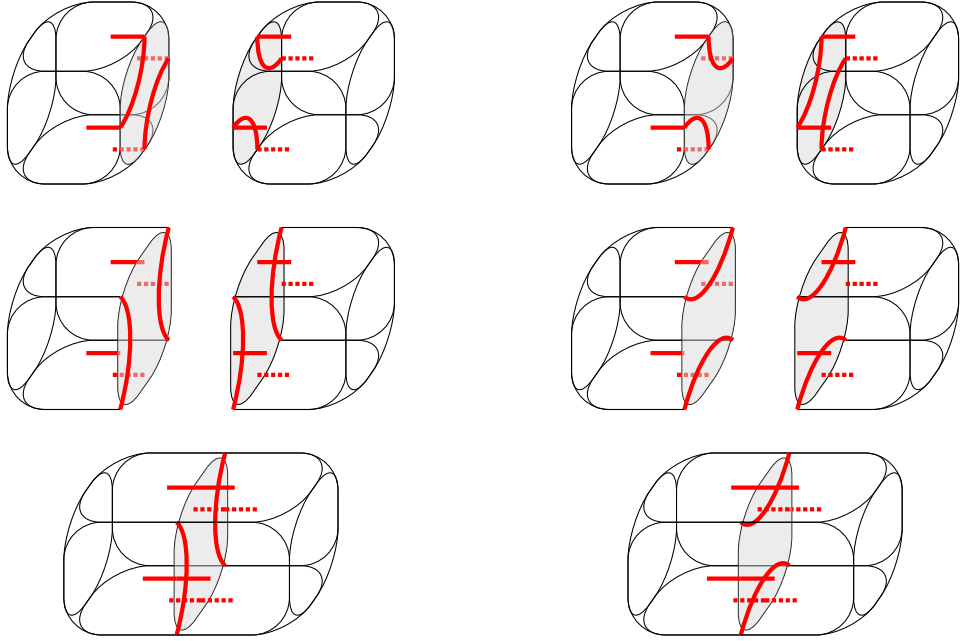

Given a collection of standard cubes, we may successively glue together their side faces in pairs by valid identifications. We may thus obtain a cubulated 3-manifold $(\Sigma, Q) \times [0,1]$. If each standard cube has a contact structure, we obtain a cubulated contact structure on $(\Sigma, Q) \times [0,1]$ where the top and bottom dividing sets are basic with respect to $Q$.

It turns out that if each cube is tight, then the result is tight, as we prove now.
\begin{lem}
\label{lem:cubulated_implies_tight}
Let $(\Sigma, Q)$ be a quadrangulated surface, and let $\xi$ be a cubulated contact structure on $(\Sigma, Q) \times [0,1]$, where each cube has a tight contact structure. Then $\xi$ is tight.
\end{lem}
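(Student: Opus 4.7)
The plan is to proceed by induction on the number $j$ of decomposing arcs in $Q$. When $j = 0$, the surface $\Sigma$ is a disjoint union of squares, so $(\Sigma, Q) \times [0,1]$ is a disjoint union of cubes, each tight by hypothesis, and there is nothing to prove.

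For the inductive step, fix a decomposing arc $A \subset Q$, cut $\Sigma$ along $A$ to obtain a quadrangulated surface $(\Sigma', Q')$ with $j-1$ decomposing arcs, and correspondingly cut $(\Sigma,Q) \times [0,1]$ along the face $A \times [0,1]$. After appropriate corner rounding, this face becomes a disk with Legendrian boundary whose dividing set, induced by the adjacent cubes' standard dividing sets, consists of two boundary-parallel arcs (mirroring the analysis in figure \ref{fig:cube_gluing}). The Legendrian realisation principle lets us isotope this disk to be convex, and cutting $\xi$ along it yields a cubulated contact structure $\xi'$ on $(\Sigma', Q') \times [0,1]$ with the same collection of tight cubes. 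The inductive hypothesis then ensures $\xi'$ is tight.

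To conclude, we must show that regluing the two copies of $A \times [0,1]$ in $\partial \Sigma' \times [0,1]$ to recover $\xi$ on $(\Sigma,Q) \times [0,1]$ preserves tightness. This is a gluing of a tight contact manifold along a pair of convex disks whose dividing sets each consist of two boundary-parallel arcs, matched coherently under the valid identification; the resulting boundary dividing sets (basic on top and bottom, vertical on the remaining sides) are tight and contain no contractible closed curves. Under these conditions, the standard convex-disk gluing principle from the theory of convex surfaces asserts that tightness is preserved. Indeed, an overtwisted disk $D$ in $(\Sigma, Q) \times [0,1]$ could be made convex and transverse to $A \times [0,1]$; the components of $D$ lying in the cut manifold, together with the boundary-parallel structure of the dividing set on $A \times [0,1]$, would produce either an overtwisted disk in $(\Sigma', Q') \times [0,1]$ (contradicting the inductive hypothesis) or a contractible closed dividing curve on $A \times [0,1]$ (contradicting its boundary-parallel form).

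The main obstacle is the last step, which hinges on invoking the correct convex-disk gluing principle. The useful fact is that cutting a tight contact manifold along a convex disk with entirely boundary-parallel dividing arcs preserves tightness, and conversely such a gluing of tight pieces produces a tight contact structure — a consequence of the standard analysis of bypass-free convex disks. The cubulated structure ensures that all hypotheses of this principle are met at every inductive step: the gluing disk $A \times [0,1]$ always inherits two boundary-parallel arcs from the two neighbouring standard cube faces, and no closed dividing curves ever appear on it.
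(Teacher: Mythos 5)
Your inductive structure — cut along one decomposing arc at a time and reglue — is essentially the same as the paper's (which glues cubes together one valid identification at a time), so the decomposition strategy is fine. The gap is in how you justify that regluing preserves tightness.

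You invoke a ``standard convex-disk gluing principle'' asserting that gluing tight contact manifolds along a convex disk with only boundary-parallel dividing arcs preserves tightness, and then try to justify it by isotoping a putative overtwisted disk $D$ transverse to $A\times[0,1]$ and claiming this produces an overtwisted disk on one side or a contractible closed curve on $A\times[0,1]$. Neither step is valid. There is no ``standard'' principle that gluing tight contact manifolds along a convex surface gives a tight result — this is false in general, which is precisely why gluing theorems in contact topology (Colin, Honda, etc.) are substantial results. And the overtwisted-disk argument does not work: if $D$ meets $A\times[0,1]$ in several arcs and circles, the components of $D$ in the cut pieces are not themselves overtwisted disks, and nothing forces a contractible closed dividing curve to appear on $A\times[0,1]$. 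An overtwisted disk can be spread across the gluing surface in ways that leave no trace on either side; detecting and ruling this out is exactly the content that a real gluing theorem supplies.

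The paper's proof correctly handles this by invoking Honda's gluing theorem \cite{Hon02}: the dividing set on the glued face consists of two arcs, so there are \emph{no nontrivial bypass surgeries} on it, hence the configuration $(\Gamma_S, \eta')$ is an isolated vertex in Honda's configuration graph, and the gluing theorem then guarantees tightness. Your intuition that the face is ``bypass-free'' is pointing at exactly this fact, but to make it a proof you need to appeal to Honda's theorem (or an equivalent result) rather than a direct overtwisted-disk argument. Replacing the last paragraph of your proposal with this citation and the two-arc/no-bypass observation would repair the argument.
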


\begin{proof}
Letting the dividing sets on $\Sigma \times \{0,1\}$ be $\Gamma_0, \Gamma_1$, we see $\xi$ is a contact structure on $M(\Gamma_0, \Gamma_1)$. As discussed above, we can cut $M(\Gamma_0, \Gamma_1)$ into a collection of standard tight cubes. We show that we can glue the cubes back together so that the contact structure remains tight, by applying Honda's theorem on gluing contact structures \cite{Hon02}. The idea is that the glued faces are ``too small" for any bypasses to pass through them.

Suppose we have a tight contact manifold $(M', \eta')$, consisting of some tight standard cubes, with some faces glued by valid identifications. We then glue two further side faces together, by a valid identification, to obtain a contact manifold $(M, \eta)$; we will show $\eta$ is tight. Both $M$ and $M'$ have dividing sets on their boundary; they are compact oriented irreducible sutured 3-manifolds which we denote $(M, \Gamma)$ and $(M',\Gamma')$. In $M$ the glued face forms an incompressible surface $S$ with Legendrian boundary intersecting the dividing set in four points. Moreover $S$ has a dividing set $\Gamma_S$ which interleaves with that on $\partial M'$, as shown in figure \ref{fig:cube_gluing}; it consists of two arcs. The dividing set $\Gamma_S$ on $S$, together with the (isotopy class of) tight contact structure $\eta$ on $M$, forms a potentially allowable configuration $(\Gamma_S, \eta')$, in the sense of \cite{Hon02}.

As the dividing set $\Gamma_S$ on $S$ consists of two arcs, there are no nontrivial bypass surgeries on it. Hence there are no nontrivial state transitions possible from $(\Gamma_S, \eta')$ to other configurations. 
So $(\Gamma_S, \eta)$ is an isolated vertex in the configuration graph, and hence by the gluing theorem, the result $\eta$ of gluing $\eta'$ along $S$ is tight.

In other words, as we glue two cubes together by a valid identification, if we had a tight contact structure beforehand, we still have one after gluing. Hence if each cube is tight, the cubulated contact structure $\xi$ obtained on $(\Sigma, Q) \times I$ is tight.
\end{proof}

Lemmas \ref{lem:all_contact_strs_cubulated} and \ref{lem:cubulated_implies_tight} are converses; together they immediately yield the following.
\begin{prop}
\label{prop:cubes_tight_means_tight}
Let $(\Sigma,Q)$ be a quadrangulated surface, and let $\Gamma_0, \Gamma_1$ be basic dividing sets. A contact structure on $M(\Gamma_0, \Gamma_1)$ is tight if and only if it is isotopic to a cubulated contact structure on $(\Sigma, Q) \times [0,1]$ where every cube has a tight contact structure.
\qed
\end{prop}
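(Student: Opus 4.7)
The plan is to observe that this proposition is precisely the conjunction of Lemma \ref{lem:all_contact_strs_cubulated} and Lemma \ref{lem:cubulated_implies_tight}, which are logical converses of one another, and so the proof amounts to citing both and combining them.

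For the forward direction, suppose $\xi$ is a tight contact structure on $M(\Gamma_0, \Gamma_1)$. Then I would apply Lemma \ref{lem:all_contact_strs_cubulated} directly: since $\Gamma_0, \Gamma_1$ are basic, it produces an isotopy of $\xi$ to a cubulated contact structure on $(\Sigma, Q) \times [0,1]$ in which each cube carries a tight contact structure, exactly as required.

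For the reverse direction, suppose $\xi$ is isotopic to a cubulated contact structure on $(\Sigma, Q) \times [0,1]$ in which every cube has a tight contact structure. Then Lemma \ref{lem:cubulated_implies_tight} applies verbatim and yields that $\xi$ is tight.

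Neither direction involves any genuine new content beyond the two lemmas just proved, so there is no main obstacle to overcome: the statement is assembled from pieces already established. A proof of essentially one line, concluding with \qed, is all that is needed.
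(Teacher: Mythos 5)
Your proof matches the paper's exactly: the paper introduces this proposition immediately after Lemmas \ref{lem:all_contact_strs_cubulated} and \ref{lem:cubulated_implies_tight} with the remark that the two lemmas are converses and together yield the result, and then marks the proposition with \qed. Nothing further is needed.
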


\subsection{Tightness of cubes}
\label{sec:tightness_of_cubes}

We now ask when a standard cube has a tight contact structure. In other words, we ask when the dividing set on a rounded standard cube is connected.

To this end we introduce some terminology and definitions for standard cubes in a cubulation $Q \times [0,1]$, where $Q$ is a quadrangulation of a marked surface $(\Sigma, V)$ consisting of squares $Q_1, \ldots, Q_k$. See figure \ref{fig:more_cube_examples} for illustrations of all this usage. The terminology may seem bizarre, but is adapted to the isomorphism of theorem \ref{thm:main_thm}.

First, we assign \emph{signs} to the vertices of cubes. Even when the cubes are rounded, we will still speak of vertices and think of them as points near the vertices of the bona fide cube. Each arc of $Q$ ends at a vertex of $V$, so each vertex of the cube $Q \times [0,1]$ is of the form $v \times \{0\}$ or $v \times \{1\}$ for some $v \in V$. We assign to this vertex the sign of $v$. Note these signs alternate around each $Q_i \times \{0\}$ and $Q_i \times \{1\}$, but do not alternate along vertical edges. We draw the positive vertices in green. (Note that these signs agree with signs of complementary regions of dividing sets on the top boundary, but disagree on the bottom.)

Second, we assign names to top and bottom dividing sets. A dividing set on the top or bottom square of a cube is called \emph{on} if it is a standard negative dividing set, and \emph{off} if it is a standard positive dividing set. The arc connecting the two positive vertices of the square $Q_i$ is called the \emph{principal diagonal} or \emph{positive diagonal}. So there is a principal diagonal on the top and bottom faces of the cube. The principal diagonal can be drawn on a face without intersecting the dividing set if and only if the face is on. When a face is on, we draw the corresponding diagonal and fill in the vertices connected by it; a face is off, we leave the diagonal out and leave the vertices hollow.

Third, we make some definitions for side faces. Swinging a principal diagonal from a positive vertex $v$ clockwise $45$ degrees, it hits an edge $e$ of the square; we say this edge is \emph{after} $v$, and the side face $e \times [0,1]$ is also \emph{after} $v$. Similarly, the edge and side face anticlockwise of $v$ are called \emph{before} $v$. Each side face can then be uniquely described as being before or after one of the two positive vertices of $Q$. The standard dividing set on a side face is called \emph{unused} if it spirals clockwise, as viewed from above, as it goes from top to bottom; otherwise it is called \emph{used}. We will often draw side faces as shaded when they are used, and clear if not.

Although these definitions assume the cube is rounded, so that the dividing set is smooth and standard on each face, they apply also when faces are sharpened. When we sharpen a side face, an unused dividing set becomes vertical, while a used dividing set becomes horizontal, as shown in figure \ref{fig:cube_gluing}. Thus, two side faces can be validly identified if and only if they are both used or both unused. 

A vertical dividing set on all sides of a cube with sharpened top and bottom corresponds, after rounding, to having all side faces unused. More generally, rounding the corners of $M(\Gamma_0, \Gamma_1)$ yields a dividing set in which every unglued side face of a cube (i.e. each side face around the boundary of $\Sigma \times [0,1]$) has unused dividing set, as shown in figure \ref{fig:vertical_side_boundary}.

\begin{figure}
\begin{center}
\begin{tikzpicture}[mypersp, scale=1.8]
\drawcube
\drawsigns
\leftclean
\rightclean
\backclean
\frontclean
\end{tikzpicture}   
\begin{tikzpicture}[mypersp, scale=1.8]
\drawcube
\drawsigns
\leftbyp
\rightbyp
\backbyp
\frontbyp
\end{tikzpicture}   
\begin{tikzpicture}[mypersp, scale=1.8]
\drawcube
\drawsigns
\leftclean
\backclean
\rightclean
\frontclean
\topon
\bottomoff
\end{tikzpicture}   
\begin{tikzpicture}[mypersp, scale=1.8]
\drawcube
\drawsigns
\leftclean
\backclean
\rightbyp
\frontbyp
\topoff
\bottomoff
\end{tikzpicture}   

\begin{tikzpicture}[mypersp, scale=1.8]
\drawcube
\drawsigns
\drawvertices
\bottomdiagonal
\draw [xyp=1, color=black!50!green, densely dotted, ultra thick] (0,0) -- (1,1);
\topoff
\bottomon
\end{tikzpicture}   
\begin{tikzpicture}[mypersp, scale=1.8]
\drawcube
\drawsigns
\drawvertices
\leftclean
\backclean
\rightclean
\frontclean
\topon
\bottomoff
\topdiagonal
\end{tikzpicture}   
\begin{tikzpicture}[mypersp, scale=1.8]
\drawcube
\drawsigns
\drawvertices
\labelvertices
\leftclean
\backclean
\frontbyp
\rightbyp
\topon
\bottomon
\topdiagonal
\bottomdiagonal
\end{tikzpicture}
\caption{Contact cubes; rounding not shown. Top, left to right: All side faces unused; all side faces used; bottom face off, top face on, all sides unused; front and right sides used, other sides unused, top and bottom off. Bottom, left to right: Top off, bottom on, so principal diagonals intersect the top dividing set but not the bottom. Centre: all sides unused, top on, bottom off. Right: the face after $v$ and the face before $w$ are used, other faces unused, top and bottom both on.}
\label{fig:cube_examples}
\label{fig:more_cube_examples}
\label{fig:principal_diagonal}
\end{center}
\end{figure}
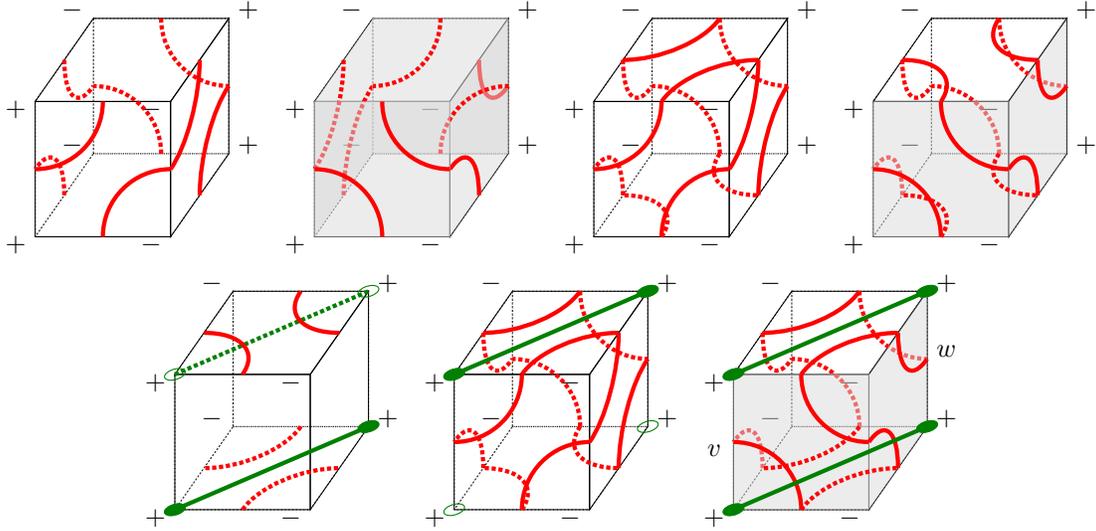


It is not difficult to run through the various cases and come up with a complete list of which standard cubes have a connected dividing set and hence a tight contact structure.
\begin{lem}
\label{lem:basic_cube_tight_conditions_1}
A standard cube has a tight contact structure if and only if it is one of the cases depicted in figure \ref{fig:tight_cubes}.
\qed
\end{lem}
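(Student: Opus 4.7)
My plan is to treat the statement as a finite combinatorial classification: there are only $2 \times 2 \times 2^4 = 64$ standard cubes (top on/off, bottom on/off, each of the four side faces used/unused), and the assertion is a statement about which of these 64 boundary configurations extend to a tight contact structure on the $3$-ball interior. The key reduction is that a standard rounded cube is a $B^3$ with convex boundary sphere, so by Eliashberg's uniqueness for tight contact structures on $B^3$ together with the criterion already cited for tightness of a dividing set on $S^2$, the cube supports a (necessarily unique) tight contact structure if and only if its rounded boundary dividing set is a single closed curve. So the entire content of the lemma is: identify which of the 64 ways of assembling standard face dividing sets produce a connected dividing set after rounding.

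To handle these 64 cases efficiently, I would first exploit the symmetries of the combinatorial structure. There is a top/bottom reflection (swapping on/off) that sends tight configurations to tight configurations; a $180^\circ$ rotation about the vertical axis through the center of the cube, which swaps $v$ with $w$, swaps the faces before/after $v$ with those before/after $w$ but preserves the used/unused labels; and a rotation swapping "before" with "after" in each pair. Modding out by the group these generate cuts the case analysis down substantially.

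Before tracing any curves, I would apply the obvious necessary condition from the Euler class. If the boundary dividing set is a single curve on $S^2$ then $R_\pm$ are both disks, so $e = \chi(R_+) - \chi(R_-) = 0$. The Euler class is additive in a straightforward way over faces after rounding (each face contributes $\pm 1$ from being on/off or used/unused with a definite sign dictated by the signed vertex pattern), so $e = 0$ eliminates a large fraction of the 64 configurations outright.

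For each remaining candidate configuration, I would directly trace the dividing set through the rounded edges and vertices. Each face carries $2$ dividing arcs, giving $12$ arcs and $24$ endpoints; each of the $12$ rounded edges pairs up $2$ of these endpoints, so the number of components is determined by a simple matching on a fixed set, and can be read off from the diagrams of each face. Carrying this out for each symmetry class yields exactly the list depicted in Figure \ref{fig:tight_cubes}. The main obstacle is purely bookkeeping: ensuring consistent conventions for "before/after $v$" and for the orientation-dependent sign of each face's Euler contribution, so that the symmetry reduction is genuinely applicable and nothing is double-counted or missed. Once these conventions are pinned down, the verification is mechanical and matches the asserted list.
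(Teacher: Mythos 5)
Your plan is essentially the paper's own argument made explicit: the paper reduces the lemma to the statement that a standard cube is tight iff the rounded boundary dividing set on $S^2$ is connected (citing Giroux's criterion and Eliashberg--Martinet just before the lemma) and then simply asserts that the finite case check can be carried out; you spell out the same reduction and add the organizing ideas of Euler-class pre-filtering and symmetry reduction. One small correction to your symmetry bookkeeping: the top/bottom reflection does not merely swap on/off on the horizontal faces --- since it reverses orientation and the top-to-bottom direction, it also exchanges used and unused on all four side faces (so for instance it interchanges the ``all unused, top/bottom both on'' case with the ``all used, top/bottom both off'' case, rather than fixing each); this does not affect the soundness of your plan, and you rightly flag convention-tracking as the main thing to get right.
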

In figure \ref{fig:tight_cubes}, to declutter the diagram we have not indicated signs of vertices explicitly, but the positive vertices are drawn in green. The two positive vertices of the square are labelled $v$ and $w$, and there is a symmetry of the square, preserving its product structure and signs of vertices, which rotates $180^\circ$ about a vertical axis, exchanging $v$ and $w$. Where two dividing sets are related by such a symmetry, we have only drawn one of them. Thus, if the vertices are labelled, we may have to rotate the cube before finding it in figure \ref{fig:tight_cubes}.

We can enumerate the various tight cubes in words too: they are as follows.
\begin{enumerate}
\item
All four side faces are unused; top and bottom are both on or both off.
\item
One side face $f$ used.
\begin{enumerate}
\item
$f$ is after a positive vertex, bottom on, top off.
\item
$f$ is before a positive vertex, bottom off, top on.
\end{enumerate}
\item
Two adjacent side faces are used.
\begin{enumerate}
\item
Used faces are before and after distinct positive vertices; top, bottom both on.
\item
Used faces are before and after the same positive vertex; top, bottom both off.
\end{enumerate}
\item
Three side faces are used.
\begin{enumerate}
\item
The unused side is after a positive vertex; bottom off, top on.
\item
The unused side is before a positive vertex; bottom on, top off.
\end{enumerate}
\item
All four side faces are used; top and bottom are both on or both off.
\end{enumerate}

\begin{figure}
\begin{center}
\begin{tikzpicture}[mypersp, scale=1.5]
\drawcube
\drawvertices
\labelvertices
\leftclean
\backclean
\frontclean
\rightclean
\topoff
\bottomoff
\draw (0.8,-0.7) node[align=center] {All sides unused \\ Top, bottom off};
\begin{scope}[xzp=0, xshift=2.3 cm, yshift = 0.2 cm]
\strandbackgroundshading
\cubestrandsetup
\leftoff
\rightoff
\draw (0.5,-0.5)  node[align=center] {$v,w \in {\bf a} \backslash (\supp h)$ \\ $M(v) \notin s \cup t$};
\end{scope}
\begin{scope}[xshift= 5 cm]
\drawcube
\drawvertices
\labelvertices
\leftclean
\backclean
\frontclean
\rightclean
\topon
\bottomon
\topdiagonal
\bottomdiagonal
\draw (0.8,-0.7) node[align=center] {All sides unused \\ Top, bottom on};
\end{scope}
\begin{scope}[xzp=0, xshift = 7.3 cm, yshift = 0.2 cm]
\strandbackgroundshading
\cubestrandsetup
\lefton
\righton
\draw [ultra thick, dotted]  (0,0.25) -- (1,0.25);
\draw [ultra thick, dotted]  (0,1.25) -- (1,1.25);
\draw (0.5,-0.5)  node[align=center] {$v,w \in {\bf a} \backslash (\supp h)$ \\ $M(v) \in s \cup t$};
\end{scope}

\begin{scope}[yshift = -3 cm]
\drawcube
\drawvertices
\labelvertices
\leftclean
\backclean
\frontbyp
\rightclean
\topoff
\bottomon
\bottomdiagonal
\draw (0.8,-0.7) node[align=center] {Side after $v$ used \\ Top off, bottom on};
\end{scope}
\begin{scope}[xzp=0, xshift=2.3 cm, yshift = -2.8 cm]
\strandbackgroundshading
\aftervused
\cubestrandsetup
\lefton
\rightoff
\draw [ultra thick]  (0,0.25) to [out=0, in=-90] (0.5,0.5);
\draw (0.5,-0.5)  node[align=center] {$v \in \partial^-(\supp h)$ \\ $w \in {\bf a} \backslash (\supp h)$ \\ $M(v) \in t \backslash s$};
\end{scope}
\begin{scope}[xshift = 5 cm, yshift=-3 cm]
\drawcube
\drawvertices
\labelvertices
\leftbyp
\backclean
\frontclean
\rightclean
\topon
\bottomoff
\topdiagonal
\draw (0.8,-0.7) node[align=center] {Side before $v$ used \\ Top on, bottom off};
\end{scope}
\begin{scope}[xzp=0, xshift=7.3 cm, yshift = -2.8 cm]
\strandbackgroundshading
\beforevused
\cubestrandsetup
\leftoff
\righton
\draw [ultra thick]  (0.5,0) to [out=90, in=180] (1,0.25);
\draw (0.5,-0.5)  node[align=center] {$v \in \partial^+ (\supp h)$ \\ $w \in {\bf a} \backslash (\supp h)$ \\ $M(v) \in t \backslash s$};
\end{scope}

\begin{scope}[yshift = -6 cm]
\drawcube
\drawvertices
\labelvertices
\leftclean
\backclean
\frontbyp
\rightbyp
\topon
\bottomon
\topdiagonal
\bottomdiagonal
\draw (0.8,-0.7)  node[align=center] {Sides after $v$, before $w$ used \\ Top, bottom on};
\end{scope}
\begin{scope}[xzp=0, xshift=2.3 cm, yshift = -5.8 cm]
\strandbackgroundshading
\aftervused
\beforewused
\cubestrandsetup
\lefton
\righton
\draw [ultra thick]  (0,0.25) to [out=0, in=-90] (0.5,0.5);
\draw [ultra thick]  (0.5,1) to [out=90, in=180] (1,1.25);
\draw (0.5,-0.5)  node[align=center] {$v \in \partial^- (\supp h)$ \\ $w \in \partial^+ (\supp h)$ \\ $M(v) \in s \cap t$};
\end{scope}
\begin{scope}[xshift = 5 cm, yshift = -6 cm]
\drawcube
\drawvertices
\labelvertices
\leftbyp
\backclean
\frontbyp
\rightclean
\topoff
\bottomoff
\draw (0.8,-0.7) node[align=center] {Sides before, after $v$ used \\ Top, bottom off};
\end{scope}
\begin{scope}[xzp=0, xshift=7.3 cm, yshift = -5.8 cm]
\strandbackgroundshading
\beforevused
\aftervused
\cubestrandsetup
\leftoff
\rightoff
\draw [ultra thick]  (0.4,0) to [out=90, in=-90] (0.6,0.5);
\draw (0.5,-0.5)  node[align=center] {$v \in \Int(\supp h)$ \\ $w \in {\bf a} \backslash (\supp h)$ \\ $M(v) \notin s \cup t$};
\end{scope}

\begin{scope}[yshift = -9 cm]
\drawcube
\drawvertices
\labelvertices
\leftbyp
\backbyp
\frontclean
\rightbyp
\topon
\bottomoff
\topdiagonal
\draw (0.8,-0.7) node[align=center] {Side after $v$ unused \\ Top on, bottom off};
\end{scope}
\begin{scope}[xzp=0, xshift=2.3 cm, yshift=-8.8 cm]
\strandbackgroundshading
\beforevused
\beforewused
\afterwused
\cubestrandsetup
\leftoff
\righton
\draw [ultra thick]  (0.5,0) to [out=90, in=180] (1,0.25);
\draw [ultra thick]  (0.4,1) to [out=90, in=-90] (0.6,1.5);
\draw (0.5,-0.5) node[align=center] {$v \in \partial^+ (\supp h)$ \\ $w \in \Int (\supp h)$ \\ $M(v) \in t \backslash s$};
\end{scope}
\begin{scope}[xshift = 5 cm, yshift = -9 cm]
\drawcube
\drawvertices
\labelvertices
\leftclean
\backbyp
\frontbyp
\rightbyp
\topoff
\bottomon
\bottomdiagonal
\draw (0.8,-0.7) node[align=center] {Side before $v$ unused \\ Top off, bottom on};
\end{scope}
\begin{scope}[xzp=0, xshift=7.3 cm, yshift = -8.8 cm]
\strandbackgroundshading
\aftervused
\beforewused
\afterwused
\cubestrandsetup
\lefton
\rightoff
\draw [ultra thick]  (0,0.25) to [out=0, in=-90] (0.5,0.5);
\draw [ultra thick]  (0.4,1) to [out=90, in=-90] (0.6,1.5);
\draw (0.5,-0.5)  node[align=center] {$v \in \partial^-  (\supp h)$ \\ $w \in \Int (\supp h)$ \\ $M(v) \in s \backslash t$};
\end{scope}

\begin{scope}[xshift = -1 cm, yshift = -12 cm]
\drawcube
\drawvertices
\labelvertices
\leftbyp
\backbyp
\frontbyp
\rightbyp
\topoff
\bottomoff
\draw (0.8,-0.7) node[align=center] {All sides used \\ Top, bottom off};\end{scope}
\begin{scope}[xzp=0, xshift=1.3 cm, yshift = -11.8 cm]
\strandbackgroundshading
\beforevused
\aftervused
\beforewused
\afterwused
\cubestrandsetup
\leftoff
\rightoff
\draw [ultra thick]  (0.4,0) to [out=90, in=-90] (0.6,0.5);
\draw [ultra thick]  (0.4,1) to [out=90, in=-90] (0.6,1.5);
\draw (0.5,-0.5)  node[align=center] {$v,w \in \Int(\supp h)$ \\ $M(v) \notin s \cup t$};
\end{scope}
\begin{scope}[xshift = 4 cm, yshift = -12 cm]
\drawcube
\drawvertices
\labelvertices
\leftbyp
\backbyp
\frontbyp
\rightbyp
\topon
\bottomon
\topdiagonal
\bottomdiagonal
\draw (0.8,-0.7) node[align=center] {All sides used \\ Top, bottom on};
\end{scope}
\begin{scope}[xzp=0, xshift=6.3 cm, yshift = -11.8 cm]
\strandbackgroundshading
\beforevused
\aftervused
\beforewused
\afterwused
\cubestrandsetup
\lefton
\righton
\draw [ultra thick]  (0,0.25) to [out=0, in=-90] (0.5,0.5);
\draw [ultra thick]  (0.5,0) to [out=90, in=180] (1,0.25);
\draw [ultra thick]  (0.4,1) to [out=90, in=-90] (0.6,1.5);
\draw (1.2,0.75) node {or};
\draw (1.2,-0.5)  node[align=center] {$v,w \in \Int(\supp h)$ \\ $M(p) \notin s \cup t$};
\end{scope}
\begin{scope}[xzp=0, xshift=8 cm, yshift = -11.8 cm]
\strandbackgroundshading
\beforevused
\aftervused
\beforewused
\afterwused
\cubestrandsetup
\lefton
\righton
\draw [ultra thick]  (0,1.25) to [out=0, in=-90] (0.5,1.5);
\draw [ultra thick]  (0.5,1) to [out=90, in=180] (1,1.25);
\draw [ultra thick]  (0.4,0) to [out=90, in=-90] (0.6,0.5);
\end{scope}
\end{tikzpicture} 

\caption{The various tight cubes, and corresponding strand diagrams.}
\label{fig:tight_cubes}
\label{fig:homology_generators_local}
\end{center}
\end{figure}
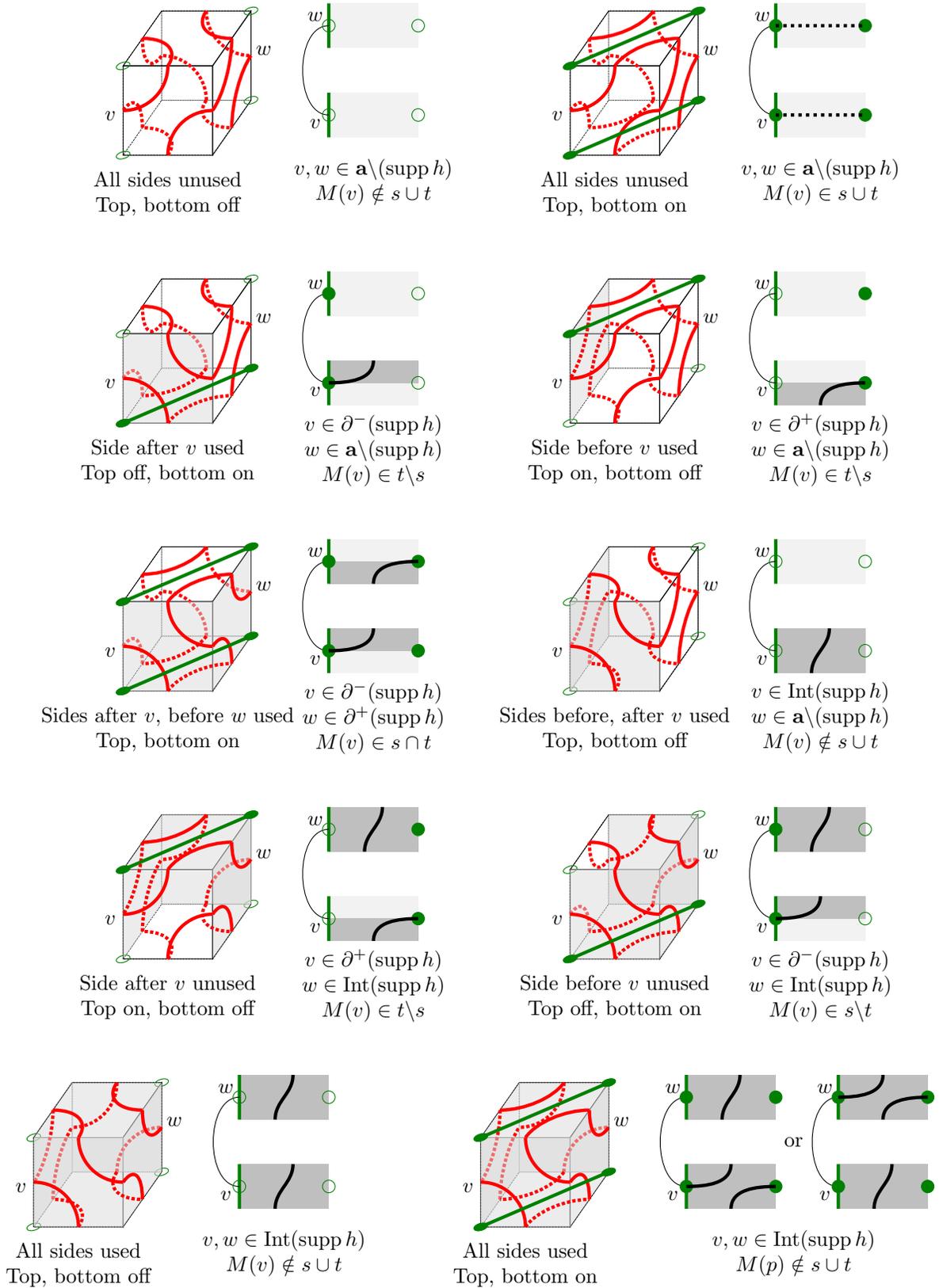

\subsection{Classification of tight contact structures}

Let $(\Sigma, Q)$ be a quadrangulated surface and let $\Gamma_0, \Gamma_1$ be basic dividing sets. We have seen (proposition \ref{prop:cubes_tight_means_tight}) that the tight contact structures on $M(\Gamma_0, \Gamma_1)$ are precisely the cubulated contact structures on $(\Sigma, Q) \times [0,1]$ with all cubes tight, i.e. satisfying the conditions of lemma \ref{lem:basic_cube_tight_conditions_1} and appearing in figure \ref{fig:tight_cubes}.

On each cube of the cubulation, the top and bottom dividing sets are prescribed by $\Gamma_0$ and $\Gamma_1$. Each side face which is not glued to another appears in the side boundary of $M(\Gamma_0, \Gamma_1)$ and hence is unused.

So the only way in which such contact structures may differ is by whether glued side faces are used or unused. The glued side faces are precisely those of the form $A \times [0,1]$, where $A$ is a decomposing arc of $Q$. If we choose, for each decomposing arc $A$, whether it is used or unused, then we have specified a dividing set on each face of each cube. These dividing sets may or may not be tight, but the choices of used and unused decomposing arcs which make all cubes tight yield tight contact structures on $M(\Gamma_0, \Gamma_1)$, and all tight contact structures on $M(\Gamma_0, \Gamma_1)$ are of this form.

To obtain a complete classification of contact structures in terms of a cubulation, it remains to check that distinct choices of used and unused glued faces yield distinct (i.e. non-isotopic) contact structures.
\begin{lem}
Suppose $\xi_0, \xi_1$ are two cubulated contact structures on $(\Sigma, Q) \times [0,1]$ such the face $A \times [0,1]$ is unused in $\xi_0$ and used in $\xi_1$. Then $\xi_0$ is not isotopic to $\xi_1$.
\end{lem}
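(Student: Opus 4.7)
The plan is to exhibit a contact-isotopy invariant that distinguishes $\xi_0$ from $\xi_1$, namely the dividing set on the convex disc $D = A \times [0,1]$. Its boundary $\partial D$ becomes Legendrian after small isotopy, and intersects the dividing set on $\partial M$ in a fixed set of four points (coming from $A \cap \Gamma_0$, $A \cap \Gamma_1$, and the endpoints of $A$ in $V$) that does not depend on the contact structure. In any tight contact structure on $M(\Gamma_0,\Gamma_1)$, after making $D$ convex, its dividing set consists of two arcs connecting these four points in one of two patterns: \emph{vertical} (two arcs running from bottom to top of $D$) corresponding to $A$ being unused, or \emph{horizontal} (two arcs crossing $D$ sideways) corresponding to $A$ being used. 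By assumption, this pattern is vertical for $\xi_0$ and horizontal for $\xi_1$.

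Suppose for contradiction that $\xi_0 \simeq \xi_1$ via an isotopy of contact structures rel $\partial M$. By Gray's theorem there is a diffeomorphism $\Phi: M \to M$, isotopic to the identity rel $\partial M$, with $\Phi_*\xi_0 = \xi_1$. Then $\Phi(D)$ is a convex disc in $(M,\xi_1)$ with dividing set the image under $\Phi$ of the vertical dividing set of $D$ in $\xi_0$, so $\Phi(D)$ has vertical dividing set. Since $\Phi$ restricts to the identity on $\partial M \supset \partial D$, the convex discs $D$ and $\Phi(D)$ share the same Legendrian boundary and are isotopic rel $\partial D$. Thus inside the tight manifold $(M,\xi_1)$ we have two convex discs, isotopic rel Legendrian boundary, whose dividing sets differ (horizontal for $D$, vertical for $\Phi(D)$).

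By standard convex surface theory (Giroux--Honda), two convex surfaces isotopic rel Legendrian boundary inside a tight contact manifold are related by a finite sequence of bypass attachments along the surface. Since the two dividing sets on $D$ differ, at least one non-trivial bypass half-disc for $D$ must exist inside $(M,\xi_1)$. Such a bypass half-disc lies in the interior of $M$ on one side of $D$, hence inside one of the two cubes $C_i = Q_i \times [0,1]$ or $C_j = Q_j \times [0,1]$ of the cubulation adjacent to $A \times [0,1]$.

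The main step, and the main obstacle, is then a case-by-case verification using Lemma \ref{lem:basic_cube_tight_conditions_1}: inside any of the tight standard cubes listed in figure \ref{fig:tight_cubes}, the dividing set on each face is rigidly determined by the tight configuration, and no bypass half-disc exists that would change the used/unused status of the face $A \times [0,1]$ while preserving the tightness of the cube. Concretely, for each of the tight cube types one checks that the three-arc bypass configuration on a convex half-disc abutting the $A$-face cannot be realized: in each case, attaching such a bypass would flip the configuration into one absent from the list in Lemma \ref{lem:basic_cube_tight_conditions_1}, hence overtwisted, contradicting tightness of the cube. This contradiction forces $\xi_0 \not\simeq \xi_1$, completing the proof.
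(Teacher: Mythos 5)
Your approach is far more elaborate than necessary and contains real gaps; the paper's proof is a one-liner. The paper simply observes that the two possible dividing-set patterns on the properly embedded disc $A\times I$ (vertical arcs for unused, horizontal for used) give different values of the relative Euler class $e(\xi)$ evaluated on $[A\times I]$. Since the relative Euler class is an isotopy invariant of the contact structure, $\xi_0\not\simeq\xi_1$. No convexity, no Gray's theorem, no bypass theory, and crucially no tightness assumption is needed.

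Your argument has two concrete problems. First, you invoke isotopy discretization by bypass attachments (Honda's theorem) and the cube-by-cube tightness analysis of lemma \ref{lem:basic_cube_tight_conditions_1}, but the lemma as stated does not assume $\xi_0$ or $\xi_1$ is tight; it only assumes they are cubulated. Your machinery is unavailable in the overtwisted case, which the statement nonetheless covers. Second, and independently, the localization step is unjustified: you assert that a bypass half-disc for $D$ ``lies inside one of the two cubes adjacent to $A\times[0,1]$.'' There is no reason for this --- a bypass disc attached to $D$ can range over arbitrarily many cubes of the cubulation, and the intermediate convex surfaces in the discretized isotopy need not respect the cubulation at all. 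The subsequent ``case-by-case verification'' therefore doesn't address the actual situation, and in any case is only asserted rather than carried out. The idea you are missing is to reach for a computable invariant (the relative Euler class) rather than a structural decomposition of the isotopy.
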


\begin{proof}
The two dividing sets obtained on $A \times I$ are the two distinct ways of matching four boundary points on a disc. So when we calculate the relative Euler class $e(\xi_0)$ on $[A \times I]$, it differs from that of $e(\xi_1)$, and hence the contact structures cannot be isotopic.
\end{proof}

We now immediately obtain the following proposition.
\begin{prop}
\label{prop:contact_arc_labelling}
The isotopy classes of tight contact structures on $M(\Gamma_0, \Gamma_1)$ are in bijective correspondence with labellings of decomposing arcs of $Q$ as ``used" of ``unused", so that each cube satisfies the conditions of lemma \ref{lem:basic_cube_tight_conditions_1}.
\qed
\end{prop}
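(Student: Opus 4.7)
The proof is largely a bookkeeping exercise consolidating the results established in the preceding subsections, so the plan is to simply assemble them into a bijection.

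First, I would set up the map from tight contact structures to labellings. Given a tight contact structure $\xi$ on $M(\Gamma_0, \Gamma_1)$, by proposition \ref{prop:cubes_tight_means_tight} we may isotope $\xi$ to a cubulated contact structure on $(\Sigma,Q) \times [0,1]$ in which every cube has a tight contact structure, i.e.\ appears on the list of lemma \ref{lem:basic_cube_tight_conditions_1}. For each decomposing arc $A$ of $Q$, the side face $A \times [0,1]$ is a glued face shared by two cubes; I label $A$ as \emph{used} or \emph{unused} according to the status of the dividing set on this face. Since each cube of the cubulation appears on the list of lemma \ref{lem:basic_cube_tight_conditions_1}, the labelling satisfies the required cube conditions. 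The top and bottom faces are fixed by $\Gamma_0$ and $\Gamma_1$ (both basic), and the unglued side faces, which lie in $\partial\Sigma \times [0,1]$, are automatically unused by the discussion surrounding figure \ref{fig:vertical_side_boundary}, so the labelling of decomposing arcs carries the only remaining data.

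Next I would show surjectivity. Given any labelling of the decomposing arcs satisfying the cube conditions, I assemble the standard cubes as prescribed: top and bottom faces are fixed by $\Gamma_0,\Gamma_1$, unglued sides are unused, and glued sides are used or unused according to the chosen labels. Each cube is then a standard cube on the list of lemma \ref{lem:basic_cube_tight_conditions_1}, hence carries a (unique) tight contact structure. The side faces of adjacent cubes agree in their used/unused status by construction, so the valid-identification condition is satisfied and the cubes glue into a cubulated contact structure on $(\Sigma,Q) \times [0,1]$. By lemma \ref{lem:cubulated_implies_tight}, the result is tight.

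Injectivity is exactly the content of the lemma immediately preceding the proposition: if two cubulated contact structures differ on the labelling of some glued face $A \times [0,1]$, then the relative Euler classes evaluated on $[A \times [0,1]]$ differ, so they are not isotopic. I expect no hidden obstacle here; the only mild subtlety worth flagging is ensuring that the passage from an abstract tight $\xi$ to a cubulated representative (via lemma \ref{lem:all_contact_strs_cubulated}) produces a well-defined labelling independent of the chosen isotopy — but this again follows from the relative Euler class computation, since isotopic contact structures induce the same Euler class on each $A \times [0,1]$ and there are only two possible dividing sets on this disc distinguished by that class. Combining the three steps gives the claimed bijection.
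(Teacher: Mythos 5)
Your proposal is correct and follows essentially the same approach as the paper, which records the proposition with a \texttt{\textbackslash qed} as an immediate consequence of proposition \ref{prop:cubes_tight_means_tight}, the preceding discussion (top and bottom faces are fixed by $\Gamma_0,\Gamma_1$, unglued side faces are unused, so only the glued faces carry freedom), and the unnamed lemma on relative Euler classes. You have merely spelled out the forward map, surjectivity, injectivity, and well-definedness more explicitly; the extra remark on well-definedness via relative Euler class is a reasonable clarification, though the paper implicitly covers it by the same lemma.
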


\subsection{Stacking cubulated contact structures}

In the contact category, as we know, morphisms correspond to stacking contact structures on $\Sigma \times [0,1]$ on top of each other. We now investigate the stacking of cubulated contact structures.

To this end let $(\Sigma,Q)$ be a quadrangulated surface, let $\Gamma_0, \Gamma_1, \Gamma_2$ be basic dividing sets, let $\xi_0$ be a tight contact structure on $M(\Gamma_0, \Gamma_1)$, and let $\xi_1$ be a tight contact structure on $M(\Gamma_1, \Gamma_2)$. Stacking $\xi_1$ on $\xi_0$ yields a contact structure $\xi$ on $M(\Gamma_0, \Gamma_2)$. Let $A$ be the set of decomposing arcs of $Q$. By proposition \ref{prop:contact_arc_labelling}, $\xi_0$ is defined by the subset of used arcs of $A$; let this set by $U_0 \subseteq A$. Similarly, let $U_1 \subseteq A$ be the set of used arcs in $\xi_1$. We can then describe $\xi$ as follows.
\begin{prop} \
\label{prop:stacking_cubulated_structures}
\begin{enumerate}
\item
If $U_0 \cap U_1 \neq \emptyset$ then $\xi$ is overtwisted.
\item
If $U_0 \cap U_1 = \emptyset$ then $\xi$ is a cubulated contact structure with used arcs $U_0 \cup U_1$. In this case, $\xi$ is tight if and only if there is a tight cubulated contact structure with used arcs $U_0 \cup U_1$.
\end{enumerate}
\end{prop}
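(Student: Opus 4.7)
My plan is to analyse the stacking on each cube of the cubulation $Q \times [0,1]$ of the stacked manifold $M(\Gamma_0, \Gamma_2)$. After rescaling the vertical direction, each cube $Q_i \times [0,1]$ of the stacked cubulation decomposes as the vertical concatenation of two sub-cubes $Q_i \times [0,1]$ (from $\xi_0$) and $Q_i \times [1,2]$ (from $\xi_1$), joined along the convex interior surface $\Sigma \times \{1\}$ which carries the basic dividing set $\Gamma_1$. The top and bottom faces of each stacked cube are standard (coming from $\Gamma_0$ and $\Gamma_2$), so the crucial step is to understand the dividing set on each combined side face $e \times [0,2]$, obtained by joining the two sub-face dividing sets across the middle arc $e \times \{1\}$, which carries the single intersection point $\Gamma_1 \cap e$.

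Next I would carry out a local case analysis on each combined side face. For an edge $e \subset \partial \Sigma$, both sub-faces are unused by construction, so the combined face is unused. For a decomposing arc $e$, the three cases can most cleanly be verified in the sharpened picture, where unused sub-faces carry two vertical dividing arcs and used sub-faces carry two horizontal ones. Stacking then gives: vertical $+$ vertical yields a single pair of vertical arcs (combined face standard unused); vertical $+$ horizontal (or horizontal $+$ vertical) yields a single pair of horizontal arcs (combined face standard used); and horizontal $+$ horizontal yields two boundary arcs together with an extra closed dividing loop formed at the middle interface. Equivalently, in the rounded picture one tracks the three arcs obtained after merging the two sub-face arcs at $\Gamma_1 \cap e$, and checks that the boundary-smoothing at $v_\pm \times \{1\}$ produces two standard boundary arcs in cases (i) and (ii), but an extra closed loop in case (iii).

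If some decomposing arc $A$ lies in $U_0 \cap U_1$, then by the $H+H$ analysis the side face $A \times [0,2]$ of an adjacent stacked cube $C$ carries a closed dividing loop, so the dividing set on $\partial C \cong S^2$ is disconnected; since $C$ is a ball, Eliashberg's classification \cite{ElMartinet} forces the contact structure on $C$ to be overtwisted, and hence so is $\xi$. On the other hand, if $U_0 \cap U_1 = \emptyset$, then every combined side face is standard and $\xi$ is a cubulated contact structure whose set of used arcs is precisely $U_0 \cup U_1$; tightness of $\xi$ then reduces, via proposition \ref{prop:cubes_tight_means_tight} and the arc-labelling correspondence of proposition \ref{prop:contact_arc_labelling}, to the existence of a tight cubulated contact structure with used arcs $U_0 \cup U_1$. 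The main obstacle I anticipate is the bookkeeping in the local case analysis of step two, particularly verifying that the $H+H$ case genuinely yields a closed dividing loop rather than merely rearranging the boundary arcs into a different standard pattern; this is the single combinatorial input on which both parts of the statement depend.
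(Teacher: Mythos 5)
Your local analysis of the stacked side faces (V$+$V unused, V$+$H and H$+$V used, H$+$H producing a closed dividing loop at the interface) is correct and matches figure \ref{fig:stacking_cubes}, and your argument for part (i) is sound: the paper concludes overtwistedness directly from the contractible dividing curve on the interior convex surface $A \times [0,2]$, while you pass to the disconnected dividing set on the boundary sphere of an adjacent cube; both routes are valid.

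There is, however, a genuine gap in part (ii). After establishing that $\xi$ is cubulated with used arcs $U_0 \cup U_1$, you assert that tightness of $\xi$ ``reduces'' via propositions \ref{prop:cubes_tight_means_tight} and \ref{prop:contact_arc_labelling} to the existence of a tight cubulated contact structure with those used arcs. But proposition \ref{prop:cubes_tight_means_tight} requires every cube of $\xi$ to carry a \emph{tight} contact structure, and this is not automatic from the combinatorics. Each cube $C$ of the stacked cubulation is obtained by gluing two tight sub-cubes $C_0, C_1$ along the square face $Q_i \times \{1\}$ carrying a standard dividing set. Even if $\partial C$ has a connected dividing set (so that a tight ball with that boundary exists), it does not follow for free that the specific contact structure produced by the gluing is that tight one rather than an overtwisted one; in general, gluing two tight balls along a convex disc can produce an overtwisted result. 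The paper closes this gap by invoking Honda's gluing theorem \cite{Hon02} exactly as in the proof of lemma \ref{lem:cubulated_implies_tight}: since the gluing square carries two boundary-parallel dividing arcs, no bypass surgeries are possible on it, so the configuration graph has an isolated vertex and the glued contact structure on $C$ is tight. Your proposal identifies the H$+$H case analysis as the ``single combinatorial input,'' but this tightness-of-the-glued-cube step is a second, independent input that your argument omits.
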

Note that if $U_0 \cap U_1 = \emptyset$, $\xi$ may or may not be tight. The final statement in the second part is necessary because $\xi$ being cubulated is just a statement about its dividing sets; we additionally assert that $\xi$ is tight, if it is possible to be so.

\begin{proof}
Let $A$ be a decomposing arc of $Q$. We first consider decomposing $\xi_0$ along $A \times [0,1]$. Rounding the corners of $M(\Gamma_0, \Gamma_1)$ as in figure \ref{fig:vertical_side_boundary} and cutting along a rounded convex $A \times [0,1]$ with Legendrian boundary, as in figure \ref{fig:cube_gluing}, we obtain a horizontal or vertical dividing set on $A \times [0,1]$, accordingly as $A$ is used or not in $\xi_0$. We can do the same for $\xi_1$.

If $A$ is used in both $\xi_0$ and $\xi_1$, then we obtain horizontal dividing sets on $A \times I$ in both contact structures. These two dividing sets stack together to give a convex surface in $\xi$ with a contractible loop: see figure \ref{fig:stacking_cubes}(above). Thus $\xi$ is overtwisted. 

If $A$ is used in one of $\xi_0, \xi_1$ but not the other, then we obtain a horizontal dividing set on $A \times [0,1]$ in one contact structure, and a vertical dividing set in the other. These piece together to give a horizontal dividing set on $A \times I$ in $\xi$: see figure \ref{fig:stacking_cubes}(below). By a similar argument, if $A$ is used in neither of $\xi_0, \xi_1$, then the face $A \times I$ is unused in $\xi$.

If $U_0 \cap U_1$ is nonempty, then there is some arc $A$ used in both $\xi_0$ and $\xi_1$, so $\xi$ is overtwisted.

Suppose now that $U_0 \cap U_1 = \emptyset$. Then we obtain a cubulation of $\xi$ by stacking together the cubes of $\xi_0$ and $\xi_1$. The used faces of $\xi$ are precisely those in which are used in $\xi_0$ or $\xi_1$, i.e. $U_0 \cup U_1$. Moreover, when we stack a tight cube $C_1$ from $\xi_1$ on top of a corresponding cube $C_0$ from $\xi_0$ to obtain a cube $C$ of $\xi$, we glue two tight 3-balls together along a square face containing a standard dividing set. If the cube $C$ has (after rounding edges) a connected dividing set on its boundary, then it has a tight contact structure. This can be seen from Honda’s gluing theorem \cite{Hon02} as in the proof of lemma \ref{lem:cubulated_implies_tight}; again, no bypass can be passed through the square with a standard dividing set. By proposition \ref{prop:cubes_tight_means_tight}, $\xi$ is tight if and only if every cube is tight.

So $\xi$ is cubulated with used arcs $U_0 \cup U_1$, and moreover, $\xi$ is tight if and only if each cube has a connected dividing set. This occurs if and only if there is a tight cubulated contact structure with used arcs $U_0 \cup U_1$.
\end{proof}

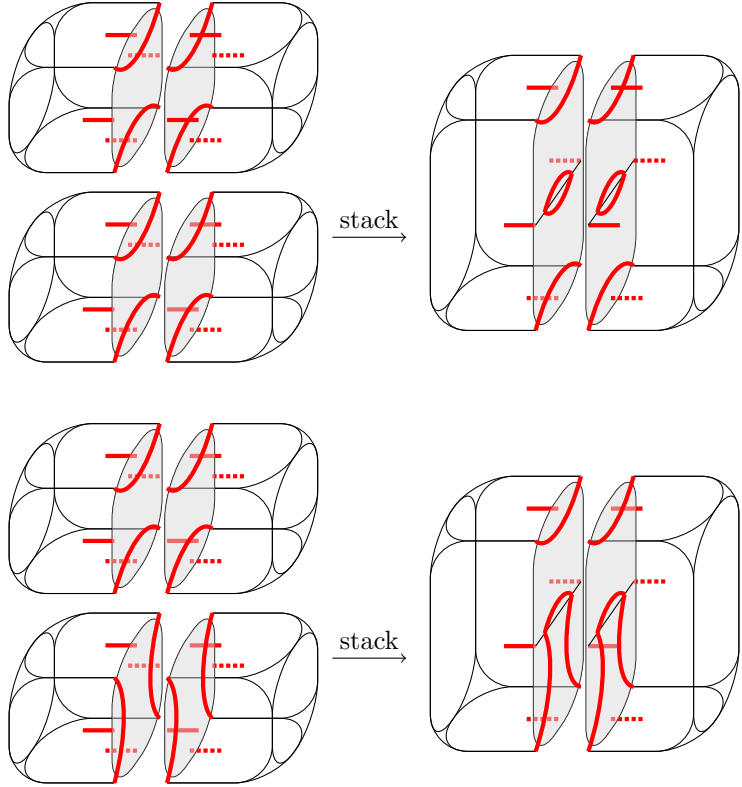
\begin{figure}
\begin{center}

\begin{tikzpicture}[mypersp, scale=1.4]
\draw [yzp=1, rounded corners=5mm] (-0.05,-0.1) -- (-0.05,1.1) -- (1.05,1.1) -- (1.05,-0.1) -- cycle;
\draw [yzp=0, rounded corners=5mm] (0,0) -- (0,1) -- (1,1) -- (1,0) -- cycle;
\draw [xzp=1, rounded corners=5mm] (1,1) -- (0,1) -- (0,0) -- (1,0);
\draw [xzp=0, rounded corners=5mm] (1,1) -- (0,1) -- (0,0) -- (1,0);
\draw [xyp=1, rounded corners=5mm] (1,1) -- (0,1) -- (0,0) -- (1,0);
\draw [xyp=0, rounded corners=5mm] (1,1) -- (0,1) -- (0,0) -- (1,0);
\draw [red, ultra thick, densely dotted, xyp=0] (1,0.5) -- (0.7,0.5);
\draw [red, ultra thick, xyp=1] (1,0.5) -- (0.7,0.5);
\draw [red, ultra thick, xzp=0] (1,0.5) -- (0.7,0.5);
\draw [red, ultra thick, densely dotted, xzp=1] (1,0.5) -- (0.7,0.5);
\fill [gray!30!white, draw=none, yzp=1, opacity=0.5, rounded corners=5mm] (-0.05,-0.1) -- (-0.05,1.1) -- (1.05,1.1) -- (1.05,-0.1) -- cycle;
\draw [red, ultra thick, yzp=1] (0,0) to [bend left=45] (1,0);
\draw [red, ultra thick, yzp=1] (0,1) to [bend right=45] (1,1);
\begin{scope}[xshift=1.5cm]
\draw [yzp=0, rounded corners=5mm] (-0.05,-0.1) -- (-0.05,1.1) -- (1.05,1.1) -- (1.05,-0.1) -- cycle;
\draw [yzp=1, rounded corners=5mm] (0,0) -- (0,1) -- (1,1) -- (1,0) -- cycle;
\draw [xzp=1, rounded corners=5mm] (0,1) -- (1,1) -- (1,0) -- (0,0);
\draw [xzp=0, rounded corners=5mm] (0,1) -- (1,1) -- (1,0) -- (0,0);
\draw [xyp=1, rounded corners=5mm] (0,1) -- (1,1) -- (1,0) -- (0,0);
\draw [xyp=0, rounded corners=5mm] (0,1) -- (1,1) -- (1,0) -- (0,0);
\fill [gray!30!white, draw=none, yzp=0, opacity=0.5, rounded corners=5mm] (-0.05,-0.1) -- (-0.05,1.1) -- (1.05,1.1) -- (1.05,-0.1) -- cycle;
\draw [red, ultra thick, densely dotted, xyp=0] (0,0.5) -- (0.3,0.5);
\draw [red, ultra thick, xyp=1] (0,0.5) -- (0.3,0.5);
\draw [red, ultra thick, xzp=0] (0,0.5) -- (0.3,0.5);
\draw [red, ultra thick, densely dotted, xzp=1] (0,0.5) -- (0.3,0.5);
\draw [red, ultra thick, yzp=0] (0,0) to [bend left=45] (1,0);
\draw [red, ultra thick, yzp=0] (0,1) to [bend right=45] (1,1);
\end{scope}
\begin{scope}[yshift=-1.8cm]
\draw [yzp=1, rounded corners=5mm] (-0.05,-0.1) -- (-0.05,1.1) -- (1.05,1.1) -- (1.05,-0.1) -- cycle;
\draw [yzp=0, rounded corners=5mm] (0,0) -- (0,1) -- (1,1) -- (1,0) -- cycle;
\draw [xzp=1, rounded corners=5mm] (1,1) -- (0,1) -- (0,0) -- (1,0);
\draw [xzp=0, rounded corners=5mm] (1,1) -- (0,1) -- (0,0) -- (1,0);
\draw [xyp=1, rounded corners=5mm] (1,1) -- (0,1) -- (0,0) -- (1,0);
\draw [xyp=0, rounded corners=5mm] (1,1) -- (0,1) -- (0,0) -- (1,0);
\draw [red, ultra thick, densely dotted, xyp=0] (1,0.5) -- (0.7,0.5);
\draw [red, ultra thick, xyp=1] (1,0.5) -- (0.7,0.5);
\draw [red, ultra thick, xzp=0] (1,0.5) -- (0.7,0.5);
\draw [red, ultra thick, densely dotted, xzp=1] (1,0.5) -- (0.7,0.5);
\fill [gray!30!white, draw=none, yzp=1, opacity=0.5, rounded corners=5mm] (-0.05,-0.1) -- (-0.05,1.1) -- (1.05,1.1) -- (1.05,-0.1) -- cycle;
\draw [red, ultra thick, yzp=1] (0,0) to [bend left=45] (1,0);
\draw [red, ultra thick, yzp=1] (0,1) to [bend right=45] (1,1);
\end{scope}
\begin{scope}[xshift=1.5 cm, yshift=-1.8cm]
\draw [yzp=0, rounded corners=5mm] (-0.05,-0.1) -- (-0.05,1.1) -- (1.05,1.1) -- (1.05,-0.1) -- cycle;
\draw [yzp=1, rounded corners=5mm] (0,0) -- (0,1) -- (1,1) -- (1,0) -- cycle;
\draw [xzp=1, rounded corners=5mm] (0,1) -- (1,1) -- (1,0) -- (0,0);
\draw [xzp=0, rounded corners=5mm] (0,1) -- (1,1) -- (1,0) -- (0,0);
\draw [xyp=1, rounded corners=5mm] (0,1) -- (1,1) -- (1,0) -- (0,0);
\draw [xyp=0, rounded corners=5mm] (0,1) -- (1,1) -- (1,0) -- (0,0);
\draw [red, ultra thick, densely dotted, xyp=0] (0,0.5) -- (0.3,0.5);
\draw [red, ultra thick, xyp=1] (0,0.5) -- (0.3,0.5);
\draw [red, ultra thick, xzp=0] (0,0.5) -- (0.3,0.5);
\draw [red, ultra thick, densely dotted, xzp=1] (0,0.5) -- (0.3,0.5);
\fill [gray!30!white, draw=none, yzp=0, opacity=0.5, rounded corners=5mm] (-0.05,-0.1) -- (-0.05,1.1) -- (1.05,1.1) -- (1.05,-0.1) -- cycle;
\draw [red, ultra thick, yzp=0] (0,0) to [bend left=45] (1,0);
\draw [red, ultra thick, yzp=0] (0,1) to [bend right=45] (1,1);
\end{scope}

\draw [->] (3.5,-1) -- (4.2,-1) node [midway, above] {stack};

\begin{scope}[xshift=4cm, yshift=-1.5cm]
\draw [yzp=1, rounded corners=5mm] (-0.05,-0.1) -- (-0.05,2.1) -- (1.05,2.1) -- (1.05,-0.1) -- cycle;
\draw [yzp=0, rounded corners=5mm] (0,0) -- (0,2) -- (1,2) -- (1,0) -- cycle;
\draw [xzp=1, rounded corners=5mm] (1,2) -- (0,2) -- (0,0) -- (1,0);
\draw [xzp=0, rounded corners=5mm] (1,2) -- (0,2) -- (0,0) -- (1,0);
\draw [xyp=2, rounded corners=5mm] (1,1) -- (0,1) -- (0,0) -- (1,0);
\draw [xyp=0, rounded corners=5mm] (1,1) -- (0,1) -- (0,0) -- (1,0);
\draw [red, ultra thick, densely dotted, xyp=0] (1,0.5) -- (0.7,0.5);
\draw [red, ultra thick, xyp=2] (1,0.5) -- (0.7,0.5);
\draw [red, ultra thick, xzp=0] (1,1) -- (0.7,1);
\draw [red, ultra thick, densely dotted, xzp=1] (1,1) -- (0.7,1);
\fill [gray!30!white, draw=none, opacity=0.5, yzp=1, rounded corners=5mm] (-0.05,-0.1) -- (-0.05,2.1) -- (1.05,2.1) -- (1.05,-0.1) -- cycle;
\draw [xyp=1] (1,0) -- (1,1);
\draw [red, ultra thick, yzp=1] (0,0) to [bend left=45] (1,0);
\draw [red, ultra thick, yzp=1] (0.2,1) to [bend right=45] (0.8,1);
\draw [red, ultra thick, yzp=1] (0,2) to [bend right=45] (1,2);
\draw [red, ultra thick, yzp=1] (0.2,1) to [bend left=45] (0.8,1);
\end{scope}
\begin{scope}[xshift=5.5cm, yshift=-1.5cm]
\draw [yzp=0, rounded corners=5mm] (-0.05,-0.1) -- (-0.05,2.1) -- (1.05,2.1) -- (1.05,-0.1) -- cycle;
\draw [yzp=1, rounded corners=5mm] (0,0) -- (0,2) -- (1,2) -- (1,0) -- cycle;
\draw [xzp=1, rounded corners=5mm] (0,2) -- (1,2) -- (1,0) -- (0,0);
\draw [xzp=0, rounded corners=5mm] (0,2) -- (1,2) -- (1,0) -- (0,0);
\draw [xyp=2, rounded corners=5mm] (0,1) -- (1,1) -- (1,0) -- (0,0);
\draw [xyp=0, rounded corners=5mm] (0,1) -- (1,1) -- (1,0) -- (0,0);
\fill [gray!30!white, draw=none, yzp=0, opacity=0.5, rounded corners=5mm] (-0.05,-0.1) -- (-0.05,2.1) -- (1.05,2.1) -- (1.05,-0.1) -- cycle;
\draw [red, ultra thick, densely dotted, xyp=0] (0,0.5) -- (0.3,0.5);
\draw [red, ultra thick, xyp=2] (0,0.5) -- (0.3,0.5);
\draw [red, ultra thick, xzp=0] (0,1) -- (0.3,1);
\draw [red, ultra thick, densely dotted, xzp=1] (0,1) -- (0.3,1);
\draw [xyp=1] (0,0) -- (0,1);
\draw [red, ultra thick, yzp=0] (0,0) to [bend left=45] (1,0);
\draw [red, ultra thick, yzp=0] (0.2,1) to [bend right=45] (0.8,1);
\draw [red, ultra thick, yzp=0] (0,2) to [bend right=45] (1,2);
\draw [red, ultra thick, yzp=0] (0.2,1) to [bend left=45] (0.8,1);
\end{scope}

\begin{scope}[yshift = -4 cm]
\draw [yzp=1, rounded corners=5mm] (-0.05,-0.1) -- (-0.05,1.1) -- (1.05,1.1) -- (1.05,-0.1) -- cycle;
\draw [yzp=0, rounded corners=5mm] (0,0) -- (0,1) -- (1,1) -- (1,0) -- cycle;
\draw [xzp=1, rounded corners=5mm] (1,1) -- (0,1) -- (0,0) -- (1,0);
\draw [xzp=0, rounded corners=5mm] (1,1) -- (0,1) -- (0,0) -- (1,0);
\draw [xyp=1, rounded corners=5mm] (1,1) -- (0,1) -- (0,0) -- (1,0);
\draw [xyp=0, rounded corners=5mm] (1,1) -- (0,1) -- (0,0) -- (1,0);
\draw [red, ultra thick, densely dotted, xyp=0] (1,0.5) -- (0.7,0.5);
\draw [red, ultra thick, xyp=1] (1,0.5) -- (0.7,0.5);
\draw [red, ultra thick, xzp=0] (1,0.5) -- (0.7,0.5);
\draw [red, ultra thick, densely dotted, xzp=1] (1,0.5) -- (0.7,0.5);
\fill [gray!30!white, draw=none, yzp=1, opacity=0.5, rounded corners=5mm] (-0.05,-0.1) -- (-0.05,1.1) -- (1.05,1.1) -- (1.05,-0.1) -- cycle;
\draw [red, ultra thick, yzp=1] (0,0) to [bend left=45] (1,0);
\draw [red, ultra thick, yzp=1] (0,1) to [bend right=45] (1,1);
\end{scope}
\begin{scope}[yshift = -4 cm, xshift=1.5cm]
\draw [yzp=0, rounded corners=5mm] (-0.05,-0.1) -- (-0.05,1.1) -- (1.05,1.1) -- (1.05,-0.1) -- cycle;
\draw [yzp=1, rounded corners=5mm] (0,0) -- (0,1) -- (1,1) -- (1,0) -- cycle;
\draw [xzp=1, rounded corners=5mm] (0,1) -- (1,1) -- (1,0) -- (0,0);
\draw [xzp=0, rounded corners=5mm] (0,1) -- (1,1) -- (1,0) -- (0,0);
\draw [xyp=1, rounded corners=5mm] (0,1) -- (1,1) -- (1,0) -- (0,0);
\draw [xyp=0, rounded corners=5mm] (0,1) -- (1,1) -- (1,0) -- (0,0);
\draw [red, ultra thick, densely dotted, xyp=0] (0,0.5) -- (0.3,0.5);
\draw [red, ultra thick, xyp=1] (0,0.5) -- (0.3,0.5);
\draw [red, ultra thick, xzp=0] (0,0.5) -- (0.3,0.5);
\draw [red, ultra thick, densely dotted, xzp=1] (0,0.5) -- (0.3,0.5);
\fill [gray!30!white, draw=none, yzp=0, opacity=0.5, rounded corners=5mm] (-0.05,-0.1) -- (-0.05,1.1) -- (1.05,1.1) -- (1.05,-0.1) -- cycle;
\draw [red, ultra thick, yzp=0] (0,0) to [bend left=45] (1,0);
\draw [red, ultra thick, yzp=0] (0,1) to [bend right=45] (1,1);
\end{scope}
\begin{scope}[yshift=-5.8cm]
\draw [yzp=1, rounded corners=5mm] (-0.05,-0.1) -- (-0.05,1.1) -- (1.05,1.1) -- (1.05,-0.1) -- cycle;
\draw [yzp=0, rounded corners=5mm] (0,0) -- (0,1) -- (1,1) -- (1,0) -- cycle;
\draw [xzp=1, rounded corners=5mm] (1,1) -- (0,1) -- (0,0) -- (1,0);
\draw [xzp=0, rounded corners=5mm] (1,1) -- (0,1) -- (0,0) -- (1,0);
\draw [xyp=1, rounded corners=5mm] (1,1) -- (0,1) -- (0,0) -- (1,0);
\draw [xyp=0, rounded corners=5mm] (1,1) -- (0,1) -- (0,0) -- (1,0);
\draw [red, ultra thick, densely dotted, xyp=0] (1,0.5) -- (0.7,0.5);
\draw [red, ultra thick, xyp=1] (1,0.5) -- (0.7,0.5);
\draw [red, ultra thick, xzp=0] (1,0.5) -- (0.7,0.5);
\draw [red, ultra thick, densely dotted, xzp=1] (1,0.5) -- (0.7,0.5);
\fill [gray!30!white, draw=none, yzp=1, opacity=0.5, rounded corners=5mm] (-0.05,-0.1) -- (-0.05,1.1) -- (1.05,1.1) -- (1.05,-0.1) -- cycle;
\draw [red, ultra thick, yzp=1] (0,0) to [bend right=45] (0,1);
\draw [red, ultra thick, yzp=1] (1,0) to [bend left=45] (1,1);
\end{scope}
\begin{scope}[xshift=1.5 cm, yshift=-5.8cm]
\draw [yzp=0, rounded corners=5mm] (-0.05,-0.1) -- (-0.05,1.1) -- (1.05,1.1) -- (1.05,-0.1) -- cycle;
\draw [yzp=1, rounded corners=5mm] (0,0) -- (0,1) -- (1,1) -- (1,0) -- cycle;
\draw [xzp=1, rounded corners=5mm] (0,1) -- (1,1) -- (1,0) -- (0,0);
\draw [xzp=0, rounded corners=5mm] (0,1) -- (1,1) -- (1,0) -- (0,0);
\draw [xyp=1, rounded corners=5mm] (0,1) -- (1,1) -- (1,0) -- (0,0);
\draw [xyp=0, rounded corners=5mm] (0,1) -- (1,1) -- (1,0) -- (0,0);
\draw [red, ultra thick, densely dotted, xyp=0] (0,0.5) -- (0.3,0.5);
\draw [red, ultra thick, xyp=1] (0,0.5) -- (0.3,0.5);
\draw [red, ultra thick, xzp=0] (0,0.5) -- (0.3,0.5);
\draw [red, ultra thick, densely dotted, xzp=1] (0,0.5) -- (0.3,0.5);
\fill [gray!30!white, draw=none, yzp=0, opacity=0.5, rounded corners=5mm] (-0.05,-0.1) -- (-0.05,1.1) -- (1.05,1.1) -- (1.05,-0.1) -- cycle;
\draw [red, ultra thick, yzp=0] (0,0) to [bend right=45] (0,1);
\draw [red, ultra thick, yzp=0] (1,0) to [bend left=45] (1,1);
\end{scope}

\draw [yshift = -4 cm, ->] (3.5,-1) -- (4.2,-1) node [midway, above] {stack};

\begin{scope}[xshift=4cm, yshift=-5.5cm]
\draw [yzp=1, rounded corners=5mm] (-0.05,-0.1) -- (-0.05,2.1) -- (1.05,2.1) -- (1.05,-0.1) -- cycle;
\draw [yzp=0, rounded corners=5mm] (0,0) -- (0,2) -- (1,2) -- (1,0) -- cycle;
\draw [xzp=1, rounded corners=5mm] (1,2) -- (0,2) -- (0,0) -- (1,0);
\draw [xzp=0, rounded corners=5mm] (1,2) -- (0,2) -- (0,0) -- (1,0);
\draw [xyp=2, rounded corners=5mm] (1,1) -- (0,1) -- (0,0) -- (1,0);
\draw [xyp=0, rounded corners=5mm] (1,1) -- (0,1) -- (0,0) -- (1,0);
\draw [red, ultra thick, densely dotted, xyp=0] (1,0.5) -- (0.7,0.5);
\draw [red, ultra thick, xyp=2] (1,0.5) -- (0.7,0.5);
\draw [red, ultra thick, xzp=0] (1,1) -- (0.7,1);
\draw [red, ultra thick, densely dotted, xzp=1] (1,1) -- (0.7,1);
\fill [gray!30!white, draw=none, opacity=0.5, yzp=1, rounded corners=5mm] (-0.05,-0.1) -- (-0.05,2.1) -- (1.05,2.1) -- (1.05,-0.1) -- cycle;
\draw [xyp=1] (1,0) -- (1,1);
\draw [red, ultra thick, yzp=1] (0,2) to [bend right=45] (1,2);
\draw [red, ultra thick, yzp=1] (0.2,1) to [bend left=45] (0.8,1);
\draw [red, ultra thick, yzp=1] (0,0) to [bend right=45] (0.2,1);
\draw [red, ultra thick, yzp=1] (1,0) to [bend left=45] (0.8,1);
\end{scope}
\begin{scope}[xshift=5.5cm, yshift=-5.5cm]
\draw [yzp=0, rounded corners=5mm] (-0.05,-0.1) -- (-0.05,2.1) -- (1.05,2.1) -- (1.05,-0.1) -- cycle;
\draw [yzp=1, rounded corners=5mm] (0,0) -- (0,2) -- (1,2) -- (1,0) -- cycle;
\draw [xzp=1, rounded corners=5mm] (0,2) -- (1,2) -- (1,0) -- (0,0);
\draw [xzp=0, rounded corners=5mm] (0,2) -- (1,2) -- (1,0) -- (0,0);
\draw [xyp=2, rounded corners=5mm] (0,1) -- (1,1) -- (1,0) -- (0,0);
\draw [xyp=0, rounded corners=5mm] (0,1) -- (1,1) -- (1,0) -- (0,0);
\draw [red, ultra thick, densely dotted, xyp=0] (0,0.5) -- (0.3,0.5);
\draw [red, ultra thick, xyp=2] (0,0.5) -- (0.3,0.5);
\draw [red, ultra thick, xzp=0] (0,1) -- (0.3,1);
\draw [red, ultra thick, densely dotted, xzp=1] (0,1) -- (0.3,1);
\fill [gray!30!white, draw=none, yzp=0, opacity=0.5, rounded corners=5mm] (-0.05,-0.1) -- (-0.05,2.1) -- (1.05,2.1) -- (1.05,-0.1) -- cycle;
\draw [xyp=1] (0,0) -- (0,1);
\draw [red, ultra thick, yzp=0] (0,2) to [bend right=45] (1,2);
\draw [red, ultra thick, yzp=0] (0.2,1) to [bend left=45] (0.8,1);
\draw [red, ultra thick, yzp=0] (0,0) to [bend right=45] (0.2,1);
\draw [red, ultra thick, yzp=0] (1,0) to [bend left=45] (0.8,1);
\end{scope}
\end{tikzpicture}

\caption{Above: Stacking two used side faces on top of each other yields an overtwisted contact structure. Below: A used face on top of an unused face yields a used face.}
\label{fig:stacking_cubes}
\end{center}
\end{figure}

\section{The strand algebra and its homology}
\label{sec:strands_algebra}

\subsection{Arc diagrams and tape graphs}
\label{sec:arc_diagrams_tape_graphs}

We now turn to the Heegaard-Floer side of the story, and discuss the strand algebra. We mostly follow Zarev's exposition in \cite{Zarev09}, but also refer to work of Lipshitz--Ozsv\'{a}th--Thurston \cite{LOT08, LOT11_Bimodules}, and refine and introduce some terminology for our purposes.

\begin{defn}
\label{def:arc_diagram}
An \emph{arc diagram} is a triple $\mathcal{Z} = ({\bf Z}, {\bf a}, M)$ where
\begin{enumerate}
\item
${\bf Z} = (Z_1, Z_2, \ldots, Z_l)$ is a sequence of oriented line segments,
\item
${\bf a} = (a_1, a_2, \ldots, a_{2k})$ is a sequence of distinct points of ${\bf Z}$ in order along ${\bf Z}$, and
\item
$M$ is a 2-to-1 function ${\bf a} \To \{1, 2, \ldots, k\}$.
\end{enumerate}
We require that after performing oriented surgery on ${\bf Z}$ at each 0-sphere $M^{-1}(i)$, the resulting 1-manifold should consist of arcs; no circles are allowed.
\end{defn}
The points of ${\bf a}$ are called \emph{places}. The function $M$ matches the points of $M$ in $k$ \emph{matched pairs} which we also call \emph{twins}. We reserve the letter $k$ for the number of marked pairs. Oriented surgery preserves the orientation of the $1$-manifold; see figure \ref{fig:0-surgery}. We draw arc diagrams by drawing the segments $Z_i$ vertically, oriented upwards; the matched pairs are indicated by \emph{arcs} between them. If we permute the sequence of line segments $Z_i$, reorder the $a_i$ and adjust the matching $M$ accordingly, we obtain an equivalent arc diagram.

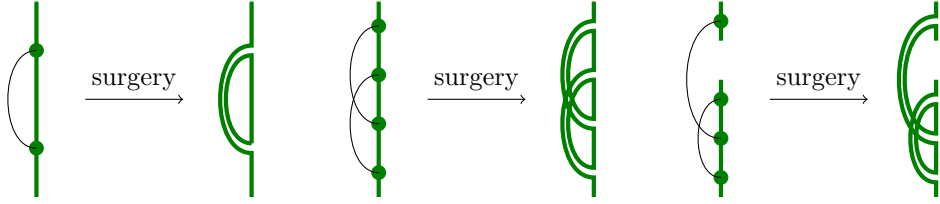
\begin{figure}
\begin{center}
\begin{tikzpicture}[scale=1.3]
\draw [ultra thick, color=black!50!green] (0,0) -- (0,2);
\draw [color=black!50!green, fill=black!50!green] (0,0.5) circle (2 pt);
\draw [color=black!50!green, fill=black!50!green] (0,1.5) circle (2 pt);
\draw (0,0.5) to [bend left=90] (0,1.5);
\draw [->] (0.5,1) -- (1.5,1) node [midway, above] {surgery};
\begin{scope}[xshift = 2.2 cm]
\draw [ultra thick, color=black!50!green] (0,0) -- (0,0.45) to [bend left=90] (0,1.55) -- (0,2);
\draw [ultra thick, color=black!50!green] (0,0.55) to [bend left=90] (0,1.45) -- (0,0.55);
\end{scope}
\begin{scope}[xshift=3.5 cm]
\draw [ultra thick, color=black!50!green] (0,0) -- (0,2);
\draw [color=black!50!green, fill=black!50!green] (0,0.25) circle (2 pt);
\draw [color=black!50!green, fill=black!50!green] (0,0.75) circle (2 pt);
\draw [color=black!50!green, fill=black!50!green] (0,1.25) circle (2 pt);
\draw [color=black!50!green, fill=black!50!green] (0,1.75) circle (2 pt);
\draw (0,0.25) to [bend left=90] (0,1.25);
\draw (0,0.75) to [bend left=90] (0,1.75);
\draw [->] (0.5,1) -- (1.5,1) node [midway, above] {surgery};
\end{scope}
\begin{scope}[xshift = 5.7 cm]
\draw [ultra thick, color=black!50!green] (0,0) -- (0,0.2) to [bend left=90] (0,1.3) -- (0,1.7) to [bend right=90] (0,0.8) -- (0,1.2) to [bend right=90] (0,0.3) -- (0,0.7) to [bend left=90] (0,1.8) -- (0,2);
\end{scope}
\begin{scope}[xshift=7 cm]
\draw [ultra thick, color=black!50!green] (0,0) -- (0,1.2);
\draw [ultra thick, color=black!50!green] (0,1.6) -- (0,2);
\draw [color=black!50!green, fill=black!50!green] (0,0.2) circle (2 pt);
\draw [color=black!50!green, fill=black!50!green] (0,0.6) circle (2 pt);
\draw [color=black!50!green, fill=black!50!green] (0,1) circle (2 pt);
\draw [color=black!50!green, fill=black!50!green] (0,1.8) circle (2 pt);
\draw (0,0.2) to [bend left=90] (0,1);
\draw (0,0.6) to [bend left=90] (0,1.8);
\draw [->] (0.5,1) -- (1.5,1) node [midway, above] {surgery};
\end{scope}
\begin{scope}[xshift = 9.2 cm]
\draw [ultra thick, color=black!50!green] (0,0) -- (0,0.15) to [bend left=90] (0,1.05) -- (0,1.2);
\draw [ultra thick, color=black!50!green] (0,1.6) -- (0,1.75) to [bend right=90] (0,0.65) -- (0,0.95) to [bend right=90] (0,0.25) -- (0,0.55) to [bend left=90] (0,1.85) -- (0,2);
\end{scope}
\end{tikzpicture}
\caption{Oriented surgery. Left: surgery yields a closed loop, so this is not an arc diagram. Centre and right: two examples of arc diagrams.}
\label{fig:0-surgery}
\end{center}
\end{figure}

An arc diagram is equivalent to a tape graph. From an arc diagram, we may collapse each line segment $Z_i$ to a vertex, and regard matched pairs as connecting pairs of vertices by edges. The orientation on each line segment provides a total ordering on total ordering of the half-edges incident to each vertex. Conversely, from a tape graph we may ``blow up" each vertex as in \cite[sec. 4.2]{Me14_twisty_itsy_bitsy} into an oriented line segment, with incident half-edges ordered along it, as in figure \ref{fig:blowing_up}(left).

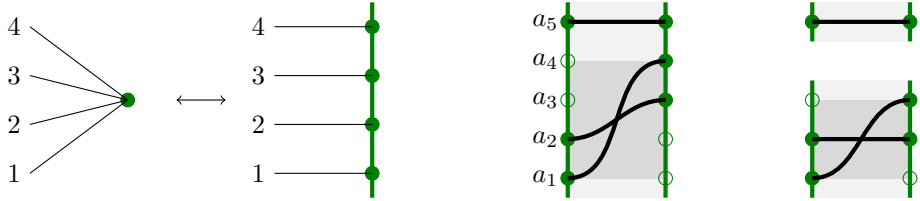
\begin{figure}
\begin{center}
\begin{tikzpicture}[scale=1.3]
\draw [color=black!50!green, fill=black!50!green] (0,0) circle (2 pt);
\draw (0,0) -- (-1,0.75) node[left] {$4$};
\draw (0,0) -- (-1,0.25) node[left] {$3$};
\draw (0,0) -- (-1,-0.25) node[left] {$2$};
\draw (0,0) -- (-1,-0.75) node[left] {$1$};
\draw [<->] (0.5,0) -- (1,0);
\begin{scope}[xshift=2.5cm]
\draw [ultra thick, color=black!50!green] (0,-1) -- (0,1);
\draw [color=black!50!green, fill=black!50!green] (0,0.75) circle (2 pt);
\draw [color=black!50!green, fill=black!50!green] (0,0.25) circle (2 pt);
\draw [color=black!50!green, fill=black!50!green] (0,-0.25) circle (2 pt);
\draw [color=black!50!green, fill=black!50!green] (0,-0.75) circle (2 pt);
\draw (0,0.75) -- (-1,0.75) node[left] {$4$};
\draw (0,0.25) -- (-1,0.25) node[left] {$3$};
\draw (0,-0.25) -- (-1,-0.25) node[left] {$2$};
\draw (0,-0.75) -- (-1,-0.75) node[left] {$1$};
\end{scope}

\begin{scope}[xshift=4.5cm, yshift = -1 cm]
\draw [draw=none, fill=gray!10!white] (0,0) -- (0,2) -- (1,2) -- (1,0) -- cycle;
\draw [draw=none, fill=gray!30!white] (0,0.2) -- (0,1.4) -- (1,1.4) -- (1,0.2) -- cycle;
\draw [ultra thick, color=black!50!green] (0,0) -- (0,2);
\draw [ultra thick, color=black!50!green] (1,0) -- (1,2);
\draw [color=black!50!green, fill=black!50!green] (0,0.2) circle (2 pt) node[left, color=black] {$a_1$};
\draw [color=black!50!green, fill=black!50!green] (0,0.6) circle (2 pt) node [left, color=black] {$a_2$};
\draw [color=black!50!green, fill=none] (0,1) circle (2 pt) node[left, color=black] {$a_3$};
\draw [color=black!50!green, fill=none] (0,1.4) circle (2 pt) node[left, color=black] {$a_4$};
\draw [color=black!50!green, fill=black!50!green] (0,1.8) circle (2 pt) node[left, color=black] {$a_5$};
\draw [color=black!50!green, fill=none] (1,0.2) circle (2 pt);
\draw [color=black!50!green, fill=none] (1,0.6) circle (2 pt);
\draw [color=black!50!green, fill=black!50!green] (1,1) circle (2 pt);
\draw [color=black!50!green, fill=black!50!green] (1,1.4) circle (2 pt);
\draw [color=black!50!green, fill=black!50!green] (1,1.8) circle (2 pt);
\draw [ultra thick]  (0,0.2) to [out=0, in=180] (1,1.4);
\draw [ultra thick]  (0,0.6) to [out=0, in=180] (1,1);
\draw [ultra thick]  (0,1.8) to [out=0, in=180] (1,1.8);
\end{scope}

\begin{scope}[xshift=7 cm, yshift = -1 cm]
\draw [draw=none, fill=gray!10!white] (0,0) -- (0,1.2) -- (1,1.2) -- (1,0) -- cycle;
\draw [draw=none, fill=gray!10!white] (0,1.6) -- (0,2) -- (1,2) -- (1,1.6) -- cycle;
\draw [draw=none, fill=gray!30!white] (0,0.2) -- (0,1) -- (1,1) -- (1,0.2) -- cycle;
\draw [ultra thick, color=black!50!green] (0,0) -- (0,1.2);
\draw [ultra thick, color=black!50!green] (0,1.6) -- (0,2);
\draw [ultra thick, color=black!50!green] (1,0) -- (1,1.2);
\draw [ultra thick, color=black!50!green] (1,1.6) -- (1,2);
\draw [color=black!50!green, fill=black!50!green] (0,0.2) circle (2 pt);
\draw [color=black!50!green, fill=black!50!green] (0,0.6) circle (2 pt);
\draw [color=black!50!green, fill=none] (0,1) circle (2 pt);
\draw [color=black!50!green, fill=black!50!green] (0,1.8) circle (2 pt);
\draw [color=black!50!green, fill=none] (1,0.2) circle (2 pt);
\draw [color=black!50!green, fill=black!50!green] (1,0.6) circle (2 pt);
\draw [color=black!50!green, fill=black!50!green] (1,1) circle (2 pt);
\draw [color=black!50!green, fill=black!50!green] (1,1.8) circle (2 pt);
\draw [ultra thick]  (0,0.2) to [out=0, in=180] (1,1);
\draw [ultra thick]  (0,0.6) to [out=0, in=180] (1,0.6);
\draw [ultra thick]  (0,1.8) to [out=0, in=180] (1,1.8);
\end{scope}
\end{tikzpicture}
\caption{Left: equivalence of tape graphs and arc diagrams. Centre: strand diagram with $3$ strands, $5$ places and $1$ inversion. Right: strand diagram with $3$ strands, $(3,1)$ places and 1 inversion.}
\label{fig:unconstrained_strands_map}
\label{fig:blowing_up}
\end{center}
\end{figure}

\subsection{Algebra of strands}
\label{sec:algebra_of_strands}

Let $k \geq 0$ and $n \geq 1$ be integers. An \emph{(unconstrained) strand map with $k$ strands on $n$ places} is a triple $\mu = (S,T,\phi)$, where $S,T \subseteq \{1, 2, \ldots, n\}$, $|S| = |T| = k$, and $\phi: S \To T$ is a non-decreasing bijection. 

We can draw a \emph{strand diagram} (or Reeb chord description) of $\mu$ as follows. Label $n$ points in order on an oriented interval $Z$ as $\{a_1, a_2, \ldots, a_n\}$. The diagram is drawn in $Z \times [0,1]$ and consists of an arc with non-negative slope (since $\phi$ is non-decreasing) from $(i,0)$ to $(\phi(i), 1)$, for each $i \in S$. If each of these $k$ arcs, or \emph{strands}, is drawn transversely, and they meet efficiently without triple crossings, then the number of crossings in the diagram is the number of \emph{inversions} of $\phi$, i.e. the number of pairs $(i,j)$ such that $i<j$ and $\phi(i) > \phi(j)$. The set of inversions of $\mu$ is denoted by $\Inv(\mu)$ and its cardinality by $\inv(\mu)$. We say $\mu$ \emph{begins} at $S$ and \emph{ends} at $T$, or \emph{goes from $S$ to $T$}; we indicate $S$ and $T$ in a strand diagram by drawing the corresponding points $a_i$ filled-in. The points $a_i$ break $Z$ into consecutive sub-intervals which we call the \emph{steps} of $Z$; we always draw the $a_i$ in the interior of $Z$ so that there are \emph{interior} steps $[a_i, a_{i+1}]$ and \emph{exterior} steps at the ends of the interval. An interior step is \emph{used} if some strand's vertical coordinate passes through the step, i.e. if there is some $j \in S$ with $j \leq i$ and $\phi(j) \geq i+1$; otherwise we say it is \emph{unused}. We indicate a used step $[a_i, a_{i+1}]$ by shading $[a_i, a_{i+1}] \times [0,1]$ darker in a strand diagram. See figure \ref{fig:unconstrained_strands_map}(centre). In practice we use strand diagrams and maps interchangeably.

The \emph{(unconstrained) strand algebra with $k$ strands and $n$ places} $\A(n,k)$ is a $\Z_2$ algebra, generated freely as a $\Z_2$-module by the strand maps with $k$ strands on $n$ places. The product of two strand maps is roughly their composition, if it is defined and has no ``excess inversions", otherwise it is zero. More precisely, let $\mu = (S,T,\phi)$ and $\nu = (U,V,\psi)$. If $T = U$ and $\inv(\psi \circ \phi) = \inv(\phi) + \inv(\psi)$, then $\mu \cdot \nu = (S,V, \psi \circ \phi)$; otherwise $\mu \cdot \nu = 0$. The product can be obtained by concatenating strand diagrams from left to right, but if strands do not match, or two strands intersect twice, the result is zero. 

The strand diagrams consisting entirely of horizontal strands are the strand maps of the form $I(S) = (S,S,1_S)$, where $S \subseteq \{1, \ldots, n\}$. Each $I(S)$ is a left and right idempotent of $\A(n,k)$; the $\Z_2$-submodule $I(S) \cdot \A(n,k) \cdot I(T)$ has basis the strand diagrams going from $S$ to $T$.

The algebra $\A(n,k)$ has a differential $\partial$ which roughly ``resolves crossings" in strand diagrams. Each crossing/inversion may be resolved in a unique way to obtain another strand diagram with less inversions. Then $\partial \mu$ is the sum of all strand diagrams obtained from $\mu$ by resolving a crossing such that the number of inversions decreases by exactly $1$. If $\mu$ has no crossings then $\partial \mu = 0$. This differential satisfies $\partial^2 = 0$ and the Leibniz rule \cite{LOT08}.

The \emph{(unconstrained) strand algebra with $k$ strands and $(n_1, \ldots, n_l)$ places} is the $\Z_2$-algebra given by
\[
\A(n_1, n_2, \ldots, n_l ; k) = \bigoplus_{k_1, \ldots, k_l} \A (n_1, k_1) \otimes \cdots \otimes \A(n_l, k_l),
\]
where the direct sum is over integers $k_1, \ldots, k_l \geq 0$ such that $k_1 + \cdots + k_l = k$. We consider $n_1, \ldots, n_l$ places lying on separate intervals $Z_1, \ldots, Z_l$. As a $\Z_2$-module, $\A(n_1, \ldots, n_l; k)$ is generated by \emph{(unconstrained) strand maps with $k$ strands and $(n_1, \ldots, n_l)$ places}, which are strand maps $\mu = (S,T,\phi)$ with $k$ strands and $n_1 + \cdots + n_l$ places, such that for each $i \in S$, $i$ and $\phi(i)$ lie on the same interval. A strand diagram of such a strand map can naturally be drawn as in figure \ref{fig:unconstrained_strands_map}(right). Notions of inversion, multiplication, used steps, and the differential carry over to $\A(n_1, \ldots, n_l; k)$ immediately.

\subsection{Algebra associated to an arc diagram}

We now define an algebra associated to an arc diagram $\mathcal{Z} = ({\bf Z}, {\bf a}, M)$, where ${\bf Z} = (Z_1, \ldots, Z_l)$ and ${\bf a} = (a_1, \ldots, a_{2k})$. Roughly, $\ZZ$ \emph{constrains} the arc diagrams discussed above for the intervals $Z_j$: strands cannot start or end at both points of a pair matched by $M$; and they must be ``symmetric" with respect to matched pairs, in a certain sense. The places of ${\bf a}$, ordered along ${\bf Z}$, can be regarded as the places of a strand map; let there be $n_j$ places on $Z_j$, so $n_1 + \cdots + n_l = 2k$. The algebra $\A(\ZZ)$ is  related to the algebras $\A(n_1, \ldots, n_l; \cdot)$.

Given a set $s \subseteq \{1, \ldots, k\}$ of size $i$, there are $2^{i}$ subsets $S$ of ${\bf a}$ which are \emph{sections} of $s$, i.e. such that $M$ restricts to a bijection $S \To s$: $s$ can be regarded a subset of the arcs of $\ZZ$ joining matched points, and each $S$ corresponds to choosing an endpoint of each arc. Throughout, we will use lower case letters to refer to subsets of $\{1, \ldots, k\}$, and upper case to refer to their sections. Each $S \subseteq {\bf a}$ defines an idempotent $I(S)$ of $\A(n_1, \ldots, n_l; i)$, consisting of horizontal strands at $S$. Adding these up over all $2^i$ sections $S$ of $s$, we obtain another idempotent $I(s)$ of $\A(n_1, \ldots, n_l; i)$, ``symmetrised" with respect to the matching; and then adding up all \emph{these} over the subsets $s \subseteq \{ 1, \ldots, k \}$ of size $i$, we obtain an idempotent $I_i$:
\[
I(s) = \sum_{S \text{ a section of } s} I(S), \quad \quad \quad
I_i = \sum_{\stackrel{s \subseteq \{1, \ldots, k\}}{|s|=i}} I(s).
\]
The $I(s)$ and the $I_i$ are orthogonal: $I(s)I(t) = I(s)$ if $s=t$, and is otherwise zero; and $I_i I_j = I_i$, if $i=j$, and is otherwise zero. The \emph{ring $\mathcal{I}(\mathcal{Z},i)$ of $\ZZ$-constrained $i$-strand idempotents} is the $\Z_2$-subalgebra of $\A(|Z_1|, \ldots, |Z_l|;i)$ generated by $I(s)$, over all $i$-element sets $s \subset \{1, \ldots, k\}$. 

We say a strand map \emph{begins} at $s \subseteq \{1, \ldots, k\}$ if it begins at a section of $s$, and \emph{ends} at $t \subseteq \{1, \ldots, k\}$ if it ends at a section of $t$; similarly we say that it \emph{goes from $s$ to $t$}. A \emph{$\ZZ$-constrained strand map} is a strand map on $(n_1, \ldots, n_l)$ places which begins at some $s$ and ends at some $t$. In other words, a strand map is $\ZZ$-constrained if it begins and ends at subsets of ${\bf a}$ which contain no matched pairs, i.e. on which $M$ is injective. See figure \ref{fig:constrained_strands_diagrams}.

\begin{figure}
\begin{center}
\begin{tikzpicture}[scale=1.4]
\begin{scope}[xshift=-5 cm]
\draw [draw=none, fill=gray!10!white] (0,0) -- (0,1.2) -- (1,1.2) -- (1,0) -- cycle;
\draw [draw=none, fill=gray!10!white] (0,1.6) -- (0,2) -- (1,2) -- (1,1.6) -- cycle;
\draw [draw=none, fill=gray!30!white] (0,0.2) -- (0,0.6) -- (1,0.6) -- (1,0.2) -- cycle;
\draw [ultra thick, color=black!50!green] (0,0) -- (0,1.2);
\draw [ultra thick, color=black!50!green] (0,1.6) -- (0,2);
\draw [color=black!50!green, fill=black!50!green] (0,0.2) circle (2 pt);
\draw [color=black!50!green, fill=black!50!green] (0,0.6) circle (2 pt);
\draw [color=black!50!green, fill=black!50!green] (0,1) circle (2 pt);
\draw [color=black!50!green, fill=black!50!green] (0,1.8) circle (2 pt);
\draw (0,0.2) to [bend left=90] (0,1);
\draw (0,0.6) to [bend left=90] (0,1.8);
\draw [ultra thick, color=black!50!green] (0,0) -- (0,1.2);
\draw [ultra thick, color=black!50!green] (0,1.6) -- (0,2);
\draw [ultra thick, color=black!50!green] (1,0) -- (1,1.2);
\draw [ultra thick, color=black!50!green] (1,1.6) -- (1,2);
\draw [color=black!50!green, fill=black!50!green] (0,0.2) circle (2 pt);
\draw [color=black!50!green, fill=black!50!green] (0,0.6) circle (2 pt);
\draw [color=black!50!green, fill=black!50!green] (0,1) circle (2 pt);
\draw [color=black!50!green, fill=black!50!green] (0,1.8) circle (2 pt);
\draw [color=black!50!green, fill=black!50!green] (1,0.2) circle (2 pt);
\draw [color=black!50!green, fill=black!50!green] (1,0.6) circle (2 pt);
\draw [color=black!50!green, fill=black!50!green] (1,1) circle (2 pt);
\draw [color=black!50!green, fill=black!50!green] (1,1.8) circle (2 pt);
\draw [ultra thick]  (0,0.2) to [out=0, in=180] (1,0.6);
\draw [ultra thick]  (0,1) to [out=0, in=180] (1,1);
\end{scope}
\begin{scope}[xshift=-2.5 cm]
\draw [draw=none, fill=gray!10!white] (0,0) -- (0,1.2) -- (1,1.2) -- (1,0) -- cycle;
\draw [draw=none, fill=gray!10!white] (0,1.6) -- (0,2) -- (1,2) -- (1,1.6) -- cycle;
\draw [draw=none, fill=gray!30!white] (0,0.2) -- (0,1) -- (1,1) -- (1,0.2) -- cycle;
\draw [ultra thick, color=black!50!green] (0,0) -- (0,1.2);
\draw [ultra thick, color=black!50!green] (0,1.6) -- (0,2);
\draw [color=black!50!green, fill=black!50!green] (0,0.2) circle (2 pt);
\draw [color=black!50!green, fill=black!50!green] (0,0.6) circle (2 pt);
\draw [color=black!50!green, fill=black!50!green] (0,1) circle (2 pt);
\draw [color=black!50!green, fill=black!50!green] (0,1.8) circle (2 pt);
\draw (0,0.2) to [bend left=90] (0,1);
\draw (0,0.6) to [bend left=90] (0,1.8);
\draw [ultra thick, color=black!50!green] (0,0) -- (0,1.2);
\draw [ultra thick, color=black!50!green] (0,1.6) -- (0,2);
\draw [ultra thick, color=black!50!green] (1,0) -- (1,1.2);
\draw [ultra thick, color=black!50!green] (1,1.6) -- (1,2);
\draw [color=black!50!green, fill=black!50!green] (0,0.2) circle (2 pt);
\draw [color=black!50!green, fill=black!50!green] (0,0.6) circle (2 pt);
\draw [color=black!50!green, fill=black!50!green] (0,1) circle (2 pt);
\draw [color=black!50!green, fill=black!50!green] (0,1.8) circle (2 pt);
\draw [color=black!50!green, fill=black!50!green] (1,0.2) circle (2 pt);
\draw [color=black!50!green, fill=black!50!green] (1,0.6) circle (2 pt);
\draw [color=black!50!green, fill=black!50!green] (1,1) circle (2 pt);
\draw [color=black!50!green, fill=black!50!green] (1,1.8) circle (2 pt);
\draw [ultra thick]  (0,0.2) to [out=0, in=180] (1,1);
\draw [ultra thick]  (0,0.6) to [out=0, in=180] (1,0.6);
\end{scope}
\end{tikzpicture}
\caption{Left: A strand diagram which is not constrained by $\mathcal{Z}$, as it begins at two matched points. Right: A $\ZZ$-constrained strand diagram.}
\label{fig:constrained_strands_diagrams}
\end{center}
\end{figure}
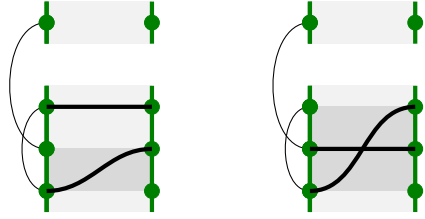

As $\Z_2$-module, $I(s) \cdot \A(n_1, \ldots, n_l; i) \cdot I(t)$ is freely generated by diagrams of $i$ strands on $(n_1, \ldots, n_l)$ places which begin at $s$ and end at $t$. Similarly, $I_i \cdot \A(n_1, \ldots, n_l; i) \cdot I_i$ has basis given by $i$-strand $\ZZ$-constrained diagrams. Thus $I_i \mu I_i = \mu$ or $0$, accordingly as $\mu$ is $i$-strand $\ZZ$-constrained or not. 

We also require that horizontal strands be ``symmetrised" with respect to  matched pairs. To this end, suppose $\mu =(A,B,\phi)$ is an unconstrained strand map on $(n_1, \ldots, n_l)$ places, where $\phi$ is strictly increasing (i.e. has no horizontal strands), and consider adding horizontal strands to $\phi$ at some places $U$, so as to obtain a strand map of $i$ strands. Such $U$ are subsets of $\{1,\ldots, 2k\}$ disjoint from $A$ and $B$, of size $i - |A| = i - |B|$. Adjoining horizontal strands to $\phi$ at $U$ results in a strand map $(A \cup U, B \cup U, \phi_U)$, where $\phi_U|_A = \phi$ and $\phi_U|_U = 1_U$. Then we can define the following sum over such $U$
\[
a_i (\mu) = a_i (A,B,\phi) = \sum_{U} \left( A \cup U, B \cup U, \phi_U \right) \in \A (n_1, \ldots, n_l ; i).
\]
Thus, $a_i(\mu)$ is the sum of all possible $i$-strand diagrams on $(n_1, \ldots, n_l)$ places obtained from $\mu$ by adding horizontal strands. ``Constraining" by multiplying by $I_i$ on left and right means that $I_i \cdot a_i (\mu) \cdot I_i$ is the sum of all possible $\ZZ$-constrained $i$-strand diagrams obtained from $\mu$ by adding horizontal strands. Constraining further by multiplying, we see that $I(s) a_i (\mu) I(t)$ is them sum of such diagrams which begin at $s$ and end at $t$. This leads to the following definition.
\begin{defn}
The \emph{$i$-strand algebra $\mathcal{A}(\mathcal{Z},i)$ of the arc diagram} $\mathcal{Z}$ is the $\Z_2$ subalgebra of $\A(n_1, \ldots, n_l; i)$ generated by $\mathcal{I}(\mathcal{Z},i)$ and the elements $I_i \cdot a_i (\mu) \cdot I_i$, over all strictly increasing strand maps $\mu$ with at most $i$ strands on $(n_1, \ldots, n_l)$ places.

The \emph{strand algebra of the arc diagram} $\mathcal{Z}$ is the direct sum
\[
\A ( \mathcal{Z} ) = \bigoplus_{i=0}^k \A (\mathcal{Z},i).
\]
\end{defn}
As a $\Z_2$-module, $\mathcal{A}(\mathcal{Z},i)$ is generated by elements of the form $I(s) a_i (\mu) I(t)$, where $\mu$ is as in the definition above and $s,t \subseteq \{1, \ldots, k\}$. Suppose $\nu$ is a $\ZZ$-constrained $i$-strand diagram appearing in $I(s) a_i (\mu) I(t)$, i.e. obtained from $\mu$ by adding horizontal strands so as to begin at $s$ and end at $t$; suppose further that $\nu$ has a horizontal strand at $p$. Then no strand begins or ends at its twin $p'$, so removing the horizontal strand at $p$ and replacing it with a horizontal strand at $p'$ results in another diagram appearing in $I(s) a_i (\mu) I(t)$. Indeed, $I(s) a_i (\mu) I(t)$ consists precisely of the diagrams obtained from $\nu$ by replacing horizontal strands in this way. If $\mu$ has $i-j$ increasing strands, then $\nu$ has $j$ horizontal strands, and $I(s) \cdot a_i (\mu) \cdot I(t)$ is the sum of the $2^j$ diagrams obtained by replacements of horizontal strands. We draw a diagram of $I(s) a_i (\mu) I(t)$ by drawing the $j$ pairs of horizontal strands \emph{dotted}; we refer to this as a \emph{symmetrised $\ZZ$-constrained strand diagram}. See figure \ref{fig:symmetrised_diagram}.

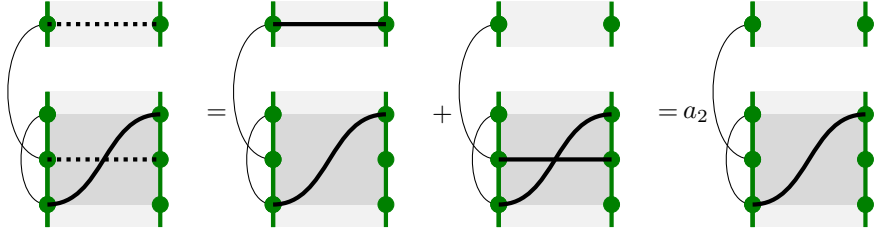
\begin{figure}
\begin{center}
\begin{tikzpicture}[scale=1.5]
\draw [draw=none, fill=gray!10!white] (0,0) -- (0,1.2) -- (1,1.2) -- (1,0) -- cycle;
\draw [draw=none, fill=gray!10!white] (0,1.6) -- (0,2) -- (1,2) -- (1,1.6) -- cycle;
\draw [draw=none, fill=gray!30!white] (0,0.2) -- (0,1) -- (1,1) -- (1,0.2) -- cycle;
\draw [ultra thick, color=black!50!green] (0,0) -- (0,1.2);
\draw [ultra thick, color=black!50!green] (0,1.6) -- (0,2);
\draw [color=black!50!green, fill=black!50!green] (0,0.2) circle (2 pt);
\draw [color=black!50!green, fill=black!50!green] (0,0.6) circle (2 pt);
\draw [color=black!50!green, fill=black!50!green] (0,1) circle (2 pt);
\draw [color=black!50!green, fill=black!50!green] (0,1.8) circle (2 pt);
\draw (0,0.2) to [bend left=90] (0,1);
\draw (0,0.6) to [bend left=90] (0,1.8);
\draw [ultra thick, color=black!50!green] (0,0) -- (0,1.2);
\draw [ultra thick, color=black!50!green] (0,1.6) -- (0,2);
\draw [ultra thick, color=black!50!green] (1,0) -- (1,1.2);
\draw [ultra thick, color=black!50!green] (1,1.6) -- (1,2);
\draw [color=black!50!green, fill=black!50!green] (0,0.2) circle (2 pt);
\draw [color=black!50!green, fill=black!50!green] (0,0.6) circle (2 pt);
\draw [color=black!50!green, fill=black!50!green] (0,1) circle (2 pt);
\draw [color=black!50!green, fill=black!50!green] (0,1.8) circle (2 pt);
\draw [color=black!50!green, fill=black!50!green] (1,0.2) circle (2 pt);
\draw [color=black!50!green, fill=black!50!green] (1,0.6) circle (2 pt);
\draw [color=black!50!green, fill=black!50!green] (1,1) circle (2 pt);
\draw [color=black!50!green, fill=black!50!green] (1,1.8) circle (2 pt);
\draw [ultra thick]  (0,0.2) to [out=0, in=180] (1,1);
\draw [ultra thick, dotted]  (0,0.6) to [out=0, in=180] (1,0.6);
\draw [ultra thick, dotted]  (0,1.8) to [out=0, in=180] (1,1.8);
\draw (1.5,1) node {$=$};
\begin{scope}[xshift = 2cm]
\draw [draw=none, fill=gray!10!white] (0,0) -- (0,1.2) -- (1,1.2) -- (1,0) -- cycle;
\draw [draw=none, fill=gray!10!white] (0,1.6) -- (0,2) -- (1,2) -- (1,1.6) -- cycle;
\draw [draw=none, fill=gray!30!white] (0,0.2) -- (0,1) -- (1,1) -- (1,0.2) -- cycle;
\draw [ultra thick, color=black!50!green] (0,0) -- (0,1.2);
\draw [ultra thick, color=black!50!green] (0,1.6) -- (0,2);
\draw [color=black!50!green, fill=black!50!green] (0,0.2) circle (2 pt);
\draw [color=black!50!green, fill=black!50!green] (0,0.6) circle (2 pt);
\draw [color=black!50!green, fill=black!50!green] (0,1) circle (2 pt);
\draw [color=black!50!green, fill=black!50!green] (0,1.8) circle (2 pt);
\draw (0,0.2) to [bend left=90] (0,1);
\draw (0,0.6) to [bend left=90] (0,1.8);
\draw [ultra thick, color=black!50!green] (0,0) -- (0,1.2);
\draw [ultra thick, color=black!50!green] (0,1.6) -- (0,2);
\draw [ultra thick, color=black!50!green] (1,0) -- (1,1.2);
\draw [ultra thick, color=black!50!green] (1,1.6) -- (1,2);
\draw [color=black!50!green, fill=black!50!green] (0,0.2) circle (2 pt);
\draw [color=black!50!green, fill=black!50!green] (0,0.6) circle (2 pt);
\draw [color=black!50!green, fill=black!50!green] (0,1) circle (2 pt);
\draw [color=black!50!green, fill=black!50!green] (0,1.8) circle (2 pt);
\draw [color=black!50!green, fill=black!50!green] (1,0.2) circle (2 pt);
\draw [color=black!50!green, fill=black!50!green] (1,0.6) circle (2 pt);
\draw [color=black!50!green, fill=black!50!green] (1,1) circle (2 pt);
\draw [color=black!50!green, fill=black!50!green] (1,1.8) circle (2 pt);
\draw [ultra thick]  (0,0.2) to [out=0, in=180] (1,1);
\draw [ultra thick]  (0,1.8) to [out=0, in=180] (1,1.8);
\draw (1.5,1) node {$+$};
\end{scope}
\begin{scope}[xshift = 4cm]
\draw [draw=none, fill=gray!10!white] (0,0) -- (0,1.2) -- (1,1.2) -- (1,0) -- cycle;
\draw [draw=none, fill=gray!10!white] (0,1.6) -- (0,2) -- (1,2) -- (1,1.6) -- cycle;
\draw [draw=none, fill=gray!30!white] (0,0.2) -- (0,1) -- (1,1) -- (1,0.2) -- cycle;
\draw [ultra thick, color=black!50!green] (0,0) -- (0,1.2);
\draw [ultra thick, color=black!50!green] (0,1.6) -- (0,2);
\draw [color=black!50!green, fill=black!50!green] (0,0.2) circle (2 pt);
\draw [color=black!50!green, fill=black!50!green] (0,0.6) circle (2 pt);
\draw [color=black!50!green, fill=black!50!green] (0,1) circle (2 pt);
\draw [color=black!50!green, fill=black!50!green] (0,1.8) circle (2 pt);
\draw (0,0.2) to [bend left=90] (0,1);
\draw (0,0.6) to [bend left=90] (0,1.8);
\draw [ultra thick, color=black!50!green] (0,0) -- (0,1.2);
\draw [ultra thick, color=black!50!green] (0,1.6) -- (0,2);
\draw [ultra thick, color=black!50!green] (1,0) -- (1,1.2);
\draw [ultra thick, color=black!50!green] (1,1.6) -- (1,2);
\draw [color=black!50!green, fill=black!50!green] (0,0.2) circle (2 pt);
\draw [color=black!50!green, fill=black!50!green] (0,0.6) circle (2 pt);
\draw [color=black!50!green, fill=black!50!green] (0,1) circle (2 pt);
\draw [color=black!50!green, fill=black!50!green] (0,1.8) circle (2 pt);
\draw [color=black!50!green, fill=black!50!green] (1,0.2) circle (2 pt);
\draw [color=black!50!green, fill=black!50!green] (1,0.6) circle (2 pt);
\draw [color=black!50!green, fill=black!50!green] (1,1) circle (2 pt);
\draw [color=black!50!green, fill=black!50!green] (1,1.8) circle (2 pt);
\draw [ultra thick]  (0,0.2) to [out=0, in=180] (1,1);
\draw [ultra thick]  (0,0.6) to [out=0, in=180] (1,0.6);
\draw (1.5,1) node {$=$};
\end{scope}
\begin{scope}[xshift = 6.25cm]
\draw (-0.5,1) node {$a_2$};
\draw [draw=none, fill=gray!10!white] (0,0) -- (0,1.2) -- (1,1.2) -- (1,0) -- cycle;
\draw [draw=none, fill=gray!10!white] (0,1.6) -- (0,2) -- (1,2) -- (1,1.6) -- cycle;
\draw [draw=none, fill=gray!30!white] (0,0.2) -- (0,1) -- (1,1) -- (1,0.2) -- cycle;
\draw [ultra thick, color=black!50!green] (0,0) -- (0,1.2);
\draw [ultra thick, color=black!50!green] (0,1.6) -- (0,2);
\draw [color=black!50!green, fill=black!50!green] (0,0.2) circle (2 pt);
\draw [color=black!50!green, fill=black!50!green] (0,0.6) circle (2 pt);
\draw [color=black!50!green, fill=black!50!green] (0,1) circle (2 pt);
\draw [color=black!50!green, fill=black!50!green] (0,1.8) circle (2 pt);
\draw (0,0.2) to [bend left=90] (0,1);
\draw (0,0.6) to [bend left=90] (0,1.8);
\draw [ultra thick, color=black!50!green] (0,0) -- (0,1.2);
\draw [ultra thick, color=black!50!green] (0,1.6) -- (0,2);
\draw [ultra thick, color=black!50!green] (1,0) -- (1,1.2);
\draw [ultra thick, color=black!50!green] (1,1.6) -- (1,2);
\draw [color=black!50!green, fill=black!50!green] (0,0.2) circle (2 pt);
\draw [color=black!50!green, fill=black!50!green] (0,0.6) circle (2 pt);
\draw [color=black!50!green, fill=black!50!green] (0,1) circle (2 pt);
\draw [color=black!50!green, fill=black!50!green] (0,1.8) circle (2 pt);
\draw [color=black!50!green, fill=black!50!green] (1,0.2) circle (2 pt);
\draw [color=black!50!green, fill=black!50!green] (1,0.6) circle (2 pt);
\draw [color=black!50!green, fill=black!50!green] (1,1) circle (2 pt);
\draw [color=black!50!green, fill=black!50!green] (1,1.8) circle (2 pt);
\draw [ultra thick]  (0,0.2) to [out=0, in=180] (1,1);
\end{scope}
\end{tikzpicture}
\caption{ Right: A symmetrised $\ZZ$-constrained strand diagram.}
\label{fig:symmetrised_diagram}
\end{center}
\end{figure}

As a $\Z_2$-module, $\mathcal{A}(\mathcal{Z},i)$ has basis the symmetrised $\ZZ$-constrained $i$-strand diagrams; and $\mathcal{A}(\mathcal{Z})$ has basis all symmetrised $\ZZ$-constrained strand diagrams. Multiplication is obtained by concatenating diagrams, the differential resolves crossings, and we can speak of used and unused steps, as before.

\subsection{Gradings on the strand algebra}
\label{sec:gradings}

The strand algebra $\mathcal{A}(\mathcal{Z},i)$ has some rather involved gradings. We only need some of these notions; see \cite{LOT08, LOT11_Bimodules, Zarev09} for details.

Fix an arc diagram $\mathcal{Z} = ({\bf Z}, {\bf a}, M)$, where ${\bf Z} = (Z_1, \ldots, Z_l)$ and ${\bf a} = (a_1, \ldots, a_{2k})$. A $\ZZ$-constrained strand map has, for our purposes, two gradings: a \emph{homological} or \emph{spin-c grading}, valued in $H_1 ({\bf Z}, {\bf a})$; and a \emph{Maslov grading} in $\frac{1}{2} \Z$. We will follow \cite{Zarev09}, which is slightly different from \cite{LOT08}.

Let $\mu = (S,T,\phi)$ be a $\ZZ$-constrained strand map. Then for each $a_i \in S$, both $a_i$ and $a_{\phi(a)}$ are places in ${\bf a}$, in the same interval $Z_j$, with $a<\phi(a)$, so there is a sub-interval $[a, \phi(a)] \subset Z_j$, giving a homology class in $H_1({\bf Z}, {\bf a})$. The \emph{homological grading} of $\mu$, denoted $[\mu]$, is given by the sum of these intervals $[a, \phi(a)]$.

The group $H_1 ({\bf Z}, {\bf a})$ has basis the interior steps of ${\bf Z}$. The number of times a step occurs in an element $\alpha$ of $H_1 ({\bf Z}, {\bf a})$ is called its \emph{multiplicity}. The steps with nonzero multiplicity form the \emph{support} of $\alpha$, denoted $\supp \alpha$. For a place $a_i \in {\bf a}$, the \emph{multiplicity} of $\alpha$ at $a_i$ is the average of the multiplicity of $\alpha$ on the steps immediately before and after $a_i$ (so lies in $\frac{1}{2} \Z$). Extending linearly, we obtain the multiplicity of $\alpha$ at any linear combination of points of ${\bf a}$, giving a linear map $m : H_0 ({\bf a}) \times H_1 ({\bf Z}, {\bf a}) \To \frac{1}{2} \Z$.

The homological grading $[\mu]$ of the $\ZZ$-constrained strand map $\mu$ is a non-negative integer combination of interior steps; its support consists of the used steps. Since horizontal strands contribute zero to the homological grading, symmetrised constrained strand maps have a well-defined homological grading. Moreover, homological grading is preserved by the differential, so $H(\A(\ZZ))$ splits over homological gradings.

The \emph{Maslov grading} of a $\ZZ$-constrained strand map $\mu = (S,T,\phi)$ is $\iota(\mu) = \inv(\phi) - m(S, [\mu]) \in \frac{1}{2} \Z$. One can check that the Maslov grading is preserved when we replace a horizontal strand at a point by a horizontal strand at its twin, so the Maslov grading is well-defined for symmetrised $\ZZ$-constrained strand diagrams. The full grading of $\mu$ is given by $(\iota(\mu), [\mu])$ and these pairs form a grading group $\Gr(\ZZ)$ which is $\frac{1}{2} \Z \times H_1 ({\bf Z}, {\bf a})$ with a certain non-abelian operation. With this grading, $\A(\ZZ)$ is a differential graded algebra \cite[prop. 2.14]{Zarev09}. 

The differential reduces the Maslov grading of a strand diagram by $1$. If we specify the homological grading $[\mu]$ and the beginning $S$ of a $\ZZ$-constrained strand diagram $\mu$, then $m(S,[\mu])$ is fixed so that $\iota(\mu) = \inv(\phi) - \text{constant}$.

\subsection{The homology of the strand algebra}

In \cite[thm. 9]{LOT11_Bimodules}, Lipshitz--Ozsv\'{a}th--Thurston gave a description of the homology of $\mathcal{A}(\mathcal{Z},i)$ for a pointed matched circle $\mathcal{Z}$. In order to generalise this description to general arc diagrams, we consider the relationship between the strand algebras of arc diagrams and pointed matched circles.

For our purposes, a \emph{pointed matched circle} is an arc diagram with one line segment, i.e. ${\bf Z} = (Z)$.

Let $\mathcal{Z}= ({\bf Z}, {\bf a}, M)$ be an arc diagram, where ${\bf Z} = (Z_1, \ldots, Z_l)$, $|Z_j| = n_j$, and ${\bf a} = (a_1, \ldots, a_{2k})$.

By gluing the intervals of $Z_j$ of ${\bf Z}$ together, we can obtain a single interval with places along it, matched in pairs; however this might not be a pointed matched circle, because of the surgery condition in the definition of an arc diagram. But by adding some additional intervals if necessary, with places matched in pairs, and gluing these together with the $Z_j$ in an appropriate fashion, one may obtain a pointed matched circle $\widehat{\mathcal{Z}} = ((Z), \widehat{{\bf a}}, \widehat{M})$ which contains $\mathcal{Z}$ as a sub-diagram. Specifically, $\mathcal{Z}$ is obtained from $\widehat{\mathcal{Z}}$ by splitting the interval $Z$ at various steps, and then removing some of the intervals, together with their matched pairs of places. Each of the intervals $Z_j$ can be regarded as sub-intervals of $Z$.

Now $\A(\ZZ,i)$ is a summand of $\A(\widehat{\ZZ},i)$ which can be defined purely in terms of homological grading and idempotents. A $\ZZ$-constrained strand diagram of $i$ strands can be regarded as a strand diagram, also of $i$ strands, constrained by $\widehat{\ZZ}$. And a $\widehat{\ZZ}$-constrained $i$-strand diagram can be regarded as a $\ZZ$-constrained $i$-strand diagram if and only if its homological grading is supported on the sub-intervals $Z_j$ of $Z$, and the strands begin and end at places on the $Z_j$. Symmetrising by taking $I(s) a_i (\mu) I(t)$ for $s,t$ corresponding to places on the $Z_j$, we then see that symmetrised $\ZZ$-constrained $i$-strand diagrams from $s$ to $t$, correspond precisely to symmetrised $\widehat{\ZZ}$-constrained $i$-strand diagrams from $s$ to $t$.

Letting $\A(\ZZ,i;h)$ denote the summand of $\A(\ZZ,i)$ with homological grading $h$, $\widehat{I}_i$ denote the sum of the idempotents $I(s)$ corresponding to places on the $Z_j$ where $|s|=i$, and noting that the differential preserves homological grading and beginning and ending idempotents, we thus have
\begin{equation}
\label{eqn:homology_from_pointed_matched_circle}
\A(\ZZ,i) = \bigoplus_{h,s,t} I(s) \A(\widehat{\ZZ},i;h) I(t)
\quad \text{and} \quad
H(\A(\ZZ,i)) = \bigoplus_{h,s,t} I(s) H(\A(\widehat{\ZZ},i;h)) I(t),
\end{equation}
where both direct sums are over $h$ supported on the $Z_j$, and $s,t$ corresponding to places on the $Z_j$.

Now as $\widehat{\ZZ}$ is a pointed matched circle, $H(\A(\widehat{\ZZ},i))$ is described by the theorem of Lipshitz--Ozsv\'{a}th--Thurston; so we obtain $H(\A(\ZZ,i))$ as the summand described above. We now state this theorem, adapted to our context.
\begin{thm}[\cite{LOT11_Bimodules} thm. 9]
\label{thm:LOT_theorem}
Let $\ZZ = ({\bf Z}, {\bf a}, M)$ be an arc diagram, let $s,t \subseteq \{1, \ldots, k\}$, and let $h \in H_1({\bf Z}, {\bf a})$. The homological-degree $h$ summand of $I(s) \cdot H(\A(\ZZ)) \cdot I(t)$ is nonzero if and only if the following conditions hold.
\begin{enumerate} 
\item The multiplicity of $h$ on each step of ${\bf Z}$ is $0$ or $1$.
\item If $v,w \in {\bf a}$ are matched by $M$, $v \in \Int(\supp(h))$, and $w \notin \Int(\supp(h))$, then $M(v) \notin s \cap t$.
\item There exists a symmetrised $\ZZ$-constrained strand diagram from $s$ to $t$ with homological grading $h$ without crossings.
\end{enumerate}
Moreover, if $h,s,t$ satisfy the conditions above, then the homological-degree $h$ summand of $I(s) \cdot H(\A(\ZZ)) \cdot I(t)$ is 1-dimensional, represented by any symmetrised $\ZZ$-constrained strand diagram from $s$ to $t$ with homological grading $h$ without crossings.
\end{thm}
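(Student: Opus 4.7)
The plan is to reduce the general arc-diagram statement to the Lipshitz--Ozsv\'ath--Thurston theorem for pointed matched circles, using the direct summand decomposition in equation \eqref{eqn:homology_from_pointed_matched_circle}. Recall that we have embedded $\ZZ$ as a sub-arc-diagram of some pointed matched circle $\widehat{\ZZ} = ((Z), \widehat{\bf a}, \widehat{M})$ by gluing the intervals $Z_j$ into a single interval $Z$, possibly with additional intervals and matched pairs inserted. The decomposition tells us that for each homological grading $h$ supported on the sub-intervals $Z_j$, and each $s, t$ corresponding to places on the $Z_j$, the summand $I(s) H(\A(\ZZ,i;h)) I(t)$ equals $I(s) H(\A(\widehat\ZZ,i;h)) I(t)$.

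First, I would invoke \cite[Thm. 9]{LOT11_Bimodules} directly for the pointed matched circle $\widehat{\ZZ}$: this gives the characterization that $I(s) H(\A(\widehat\ZZ, i; h)) I(t)$ is nonzero (and then one-dimensional) precisely when the three conditions of the theorem hold with respect to the arc diagram $\widehat{\ZZ}$. Then it remains to check that, when we further restrict to $h$ supported on the sub-intervals $Z_j$ and $s, t$ coming from places on the $Z_j$, the conditions for $\widehat{\ZZ}$ specialize exactly to the three conditions stated for $\ZZ$.

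Condition (i) is unchanged: multiplicity $0$ or $1$ on each step of $Z$ restricts to multiplicity $0$ or $1$ on each step of ${\bf Z}$, since the additional steps of $\widehat{\ZZ} \setminus \ZZ$ automatically have multiplicity $0$. For condition (ii), one checks that the only matched pairs of $\widehat{M}$ which are not already matched pairs of $M$ involve places on intervals of $\widehat{\ZZ} \setminus \ZZ$, and these places are disjoint from $\supp(h) \cup s \cup t$, so the condition is automatically satisfied for these extra pairs; for matched pairs of $M$, the condition for $\widehat{\ZZ}$ is identical to the condition stated for $\ZZ$. Condition (iii) then translates directly: a symmetrised $\widehat\ZZ$-constrained strand diagram from $s$ to $t$ with homological grading $h$ (supported on the $Z_j$) has all its strands on the $Z_j$, hence is a symmetrised $\ZZ$-constrained strand diagram, and vice versa.

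The main technical obstacle is condition (ii), where one must verify that the ``interior'' structure transfers correctly between the two arc diagrams: a point $v \in {\bf a}$ in the interior of $\supp(h)$ relative to $\widehat{\ZZ}$ is also in the interior of $\supp(h)$ relative to $\ZZ$, provided $v$ lies strictly inside one of the $Z_j$ (so that both adjacent steps belong to the same $Z_j$). If $v$ sits at an endpoint of some $Z_j$, one must be careful: what counts as ``interior'' can differ between $\ZZ$ and $\widehat{\ZZ}$. Handling this case is the only genuine subtlety; it is dealt with by observing that when $h$ is supported on the sub-intervals, points at the boundaries of the $Z_j$ cannot lie in the interior of $\supp(h)$ in either sense, so the two notions of interior agree at all relevant places. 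With these verifications, the theorem follows.
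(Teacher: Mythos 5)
Your proposal correctly identifies the overall strategy — reduce to the pointed matched circle $\widehat{\ZZ}$ via the decomposition \eqref{eqn:homology_from_pointed_matched_circle} and then invoke \cite[thm.~9]{LOT11_Bimodules} — and your discussion of how conditions (i)--(iii) transfer between $\ZZ$ and $\widehat{\ZZ}$ is reasonable (and in fact more careful on that point than the paper, which largely packages that transfer into equation \eqref{eqn:homology_from_pointed_matched_circle} itself). But there is a genuine gap: you treat \cite[thm.~9]{LOT11_Bimodules} as though it is already phrased as a statement about the homological-grading-$h$ summand with exactly the three conditions (i)--(iii). It is not. The theorem of Lipshitz--Ozsv\'ath--Thurston characterizes when a $(l,h)$-bigraded summand is nonzero, with \emph{four} conditions: two that match (i) and (ii), a ``compatibility'' condition on $h$ with $I(s)$ and $I(t)$, and a fourth condition requiring the Maslov grading $l$ to be \emph{minimal} among symmetrised constrained diagrams from $s$ to $t$ with grading $h$. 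Passing from that bigraded statement to the stated theorem here — a statement about the $h$-graded summand, with the crossingless-diagram condition (iii) replacing both the compatibility and minimality conditions — requires an argument that you omit entirely.

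Concretely, the missing step is the observation from section \ref{sec:gradings} that once $h$, $s$, $t$ are fixed, the Maslov grading of any diagram equals its number of crossings minus a fixed constant. This is what lets one identify LOT's minimality condition on $l$ with the existence of a crossingless diagram, and it is also what guarantees that among all $(l,h)$-bigraded pieces with this $h$ only one Maslov degree $l$ can carry nonzero homology — which is needed to conclude that the $h$-graded summand is one-dimensional (otherwise you might have contributions from several values of $l$). Likewise, in the converse direction, you need to argue that if a crossingless diagram $\mu$ exists with grading $h$ from $s$ to $t$, then its Maslov grading $l$ is minimal and $h$ is compatible with $I(s)$, $I(t)$, so that LOT's four conditions are satisfied for the pair $(l,h)$. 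Without addressing the bigraded-versus-singly-graded translation, the proposal does not actually prove the theorem: the central content of the proof is precisely this translation, not the reduction from $\ZZ$ to $\widehat{\ZZ}$, which is already furnished by \eqref{eqn:homology_from_pointed_matched_circle}.
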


\begin{proof}
Because of the decomposition in equation \ref{eqn:homology_from_pointed_matched_circle}, it suffices to check that the equivalence of our conditions with those of \cite{LOT11_Bimodules} for the appropriate homological gradings (i.e. those supported on the $Z_j$), for the algebra of the pointed matched circle $\widehat{\ZZ}$. 

Suppose the homological-degree $h$ summand of $I(s) \cdot H(\A(\widehat{\ZZ})) \cdot I(t)$ is nonzero; so there exists at least one $l \in \frac{1}{2} \Z$ such that the $(l,h)$-graded summand is nonzero. Then \cite{LOT11_Bimodules} states that (i) and (ii) hold. Moreover \cite{LOT11_Bimodules} asserts that when the $(l,h)$-graded summand is nonzero, it is 1-dimensional, represented by any crossingless diagram of that grading. Hence there exists a symmetrised $\widehat{\ZZ}$-constrained strand diagram from $s$ to $t$ with homological grading $h$ without crossings, so (iii) holds. 

The theorem in \cite{LOT11_Bimodules} also states that for the degree $(l,h)$ summand of $I(s) \cdot H(\A(\widehat{\ZZ})) \cdot I(t)$ to be nonzero, the Maslov degree must be minimal among symmetrised $\widehat{\ZZ}$-constrained strand diagrams from $s$ to $t$ with homological grading $h$. As mentioned in section \ref{sec:gradings}, once $h$, $s$ and $t$ are fixed, the Maslov grading is given by the number of crossings, minus a constant, and so the minimal Maslov degree is precisely the one with zero crossings. Thus there is precisely one Maslov grading $l$ such that the $(l,h)$ graded summand is nonzero. Hence the $h$-graded summand of $I(s) \cdot H(\A(\widehat{\ZZ})) \cdot I(t)$ is 1-dimensional, represented by any crossingless symmetrised $\ZZ$-constrained diagram from $s$ to $t$ with homological grading $h$.

Now suppose $h,s,t$ satisfy the conditions above. By (iii) there exists a symmetrised $\widehat{\ZZ}$-constrained strand diagram $\mu$ from $s$ to $t$ with homological grading $h$ and no crossings. Let $\mu$ have Maslov grading $l$. We show that $h,s,t,l$ satisfy conditions 1--4 of \cite[thm. 9]{LOT11_Bimodules}; conditions 2 and 3 are satisfied immediately by our conditions (i) and (ii). The existence of $\mu$ implies that $h$ is compatible with $I(s)$ and $I(t)$, in the sense of \cite[defn. 3.7]{LOT11_Bimodules}, so condition 1 is satisfied. Again, with $h,s,t$ are fixed, Maslov grading is number of crossings minus a constant; so since $\mu$ has no crossings, $l$ is minimal among all symmetrised $\widehat{\ZZ}$-constrained strand diagrams from $s$ to $t$ with homological grading $h$. Thus condition 4 holds. By \cite[thm. 9]{LOT11_Bimodules} then the degree $(l,h)$ summand of $I(s) \cdot H(\A(\widehat{\ZZ})) \cdot I(t)$ is nonzero, hence also the homological-degree $h$ summand.
\end{proof}

\subsection{Local description of homology}

We can now give a ``local" description of $H(\A(\ZZ))$. The idea is that if $h \in H_1({\bf Z}, {\bf a})$ and $s,t \subseteq \{1, \ldots, k\}$ satisfy conditions (i)---(iii) of theorem \ref{thm:LOT_theorem}, then we can say what a symmetrised $\ZZ$-constrained strand diagram must look like ``locally" near a pair of matched points.

More precisely, suppose $h,s,t$ satisfy conditions (i)--(iii) of theorem \ref{thm:LOT_theorem}, and let $v, w \in {\bf a}$ be two matched points, so $M(v)=M(w)$. We consider the fragment of $\ZZ$ at $v$, $w$ and their adjacent steps. 

As $h$ has multiplicity $0$ or $1$ on each step, it is specified by its set of used steps, or support; $h$ can be regarded as a collection of oriented sub-intervals of ${\bf Z}$. For any interval $I$, we write $\partial^+ I$ for its positive boundary (i.e. maximum), and $\partial^- I$ for its negative boundary (i.e. minimum); hence in homology $\partial I = \partial^+ I - \partial^- I$. Each point of ${\bf a}$ thus lies in exactly one of $\partial^- (\supp h)$, $\partial^+ (\supp h)$, $\Int (\supp h)$, or ${\bf a} \backslash (\supp h)$. 

Condition (iii) tells us that, if a place $v \in {\bf a}$ has the property that the step immediately before $v$ is not used by $h$, but the step immediately before $v$ is, then $M(v) \in s$. In other words, these conditions on $h$ imply that some strand must begin at $v$. Similarly, if the step before $p$ is used by $h$, but the step after $v$ is not, then $M(v)  \in t$. 

In general, $M(v)$ satisfies exactly one of $M(v) \in s \cap t$, $M(v) \in s \backslash t$, $M(v) \in t \backslash s$, or $M(v) \notin s \cup t$. We call the data of whether $v,w \in \partial^+(\supp h), \partial^- (\supp h), \Int(\supp h), {\bf a} \backslash (\supp h)$, and whether $M(v) \in s \cap t, s \backslash t, t \backslash s$ or $M(v) \notin s \cup t$, \emph{the data of $h,s,t$ near $v,w$}.

Considering the various possibilities, the fragment of $\ZZ$ at the $2$ places $v,w$ and the $4$ adjacent steps, must fall into precisely one of the following cases, which are also illustrated in figure \ref{fig:homology_generators_local}. We compute the data of $h,s,t$ near $v,w$ in each case.
\begin{enumerate}
\item
No steps used. In this case any strand appearing must be horizontal, hence symmetrised (dotted).
\begin{enumerate}
\item
There are symmetrised horizontal strands at $v, w$. Then $v,w \in {\bf a} \backslash (\supp h)$ and $M(v) \in s \cap t$.
\item
No strand begins or ends at $v$ or $w$. Then $v,w \in {\bf a} \backslash (\supp h)$ and $M(v) \notin s \cup t$.
\end{enumerate}
\item
One step used; say it is adjacent to $v$.
\begin{enumerate}
\item
Step after $v$ used, so a strand begins at $v$. Then $v \in \partial^- (\supp h)$ and $M(v) \in s \backslash t$.
\item
Step before $v$ used, so a strand ends at $v$. Then  $v \in \partial^+ (\supp h)$ and $M(v) \in t \backslash s$.
\end{enumerate}
\item
Two steps used. In this case the steps before $v$ and $w$ cannot both be used, for then a strand must end at $v$ and another strand must end at $w$, contradicting the $\ZZ$-constrained property. Similarly, the steps after $v$ and $w$ cannot both be used.
\begin{enumerate}
\item
Steps used are before and after distinct places, say after $v$ and before $w$, so a strand begins at $v$ and a strand ends at $w$. Then $v \in \partial^- (\supp h)$, $w \in \partial^+ (\supp h)$, and $M(v) \in s \cap t$.
\item
Steps used are before and after a single place, say $v$. Then $v \in \Int(\supp h )$ but $w \notin \Int(\supp h)$, so by condition (ii) then $M(v) \notin s \cap t$. Any strand at $w$ would have to be horizontal, contradicting this condition; and a strand begins at $v$ if and only if a strand ends at $v$, again contradicting this condition; hence no strand can begin or end at $v$ or $w$. So $v \in \Int(\supp h)$, $w \in {\bf a} \backslash (\supp h)$, and $M(v) \notin s \cup t$.
\end{enumerate}
\item
Three steps used; say the unused step is adjacent to $v$.
\begin{enumerate}
\item
The unused step is after $v$. Then a strand must end at $v$; the $\ZZ$-constrained condition them implies no strand ends at $w$; and so no strand can begin at $w$ either. Thus $v \in \partial^+ (\supp h)$, $w \in \Int(\supp h)$ and $M(v) \in t \backslash s$.
\item
The unused step is before $v$. Then a strand must begin at $v$; $\ZZ$-constraint then implies no strand begins at $w$; then no strand ends at $w$ either. Thus $v  \in \partial^-  (\supp h)$, $w \in \Int(\supp h)$ and $M(v) \in s \backslash t$.
\end{enumerate}
\item
All four steps used. Then a strand begins at $v$ if and only if a strand ends at $v$; and similarly for $w$. So $M(v)$ lies in neither or both of $s$ and $t$.
\begin{enumerate}
\item
No strand begins or ends at $v$ or $w$. So $v,w \in \Int(\supp h)$ and $(v) \notin s \cup t$.
\item
If $M(v)$ lies in both $s$ and $t$, there are two possibilities: either a strand begins at $v$, a strand ends at $v$, and no strand begins or ends at $w$; or a strand begins at $w$, a strand ends at $w$, and no strand begins or ends at $v$. Either way, $v,w \in \Int(\supp h)$ and $M(p) \notin s \cup t$.
\end{enumerate}
\end{enumerate}
(This classification into cases parallels the classification in section \ref{sec:tightness_of_cubes}.)

In other words, in a symmetrised $\ZZ$-constrained strand diagram which is nonzero in $H(\A(\ZZ))$, near every pair of matched places the diagram must look like one of the cases in figure \ref{fig:homology_generators_local} (up to a possible relabelling of $v$ and $w$). We also observe that, conversely, if a strand diagram (a priori unconstrained) looks near every pair of matched places like one of these cases, then it is $\ZZ$-constrained, symmetrised, and satisfies the conditions of theorem \ref{thm:LOT_theorem}, hence is nonzero in $H(\A(\ZZ))$.

If a diagram exists which is nonzero in $H(\A(\ZZ))$, goes from $s$ to $t$, and has homological grading $h$, then the diagram is completely determined near each pair of matched places $v,w$ by the data of $h,s,t$ near $v,w$ --- \emph{except} in the very final case where both $v,w \in \Int(\supp h)$ and $M(p) \notin s \cup t$. There are then two possibilities, which are shown at the bottom right of figure \ref{fig:homology_generators_local}. However, \emph{in homology} these two diagrams are equal, because of the equation shown in figure \ref{fig:equal_in_homology}. (Although this only shows the strand diagram near $v$ and $w$, if we have larger strand diagrams which are equal elsewhere, have no further crossings, and are as shown near $v$ and $w$, then the equation still holds.) 

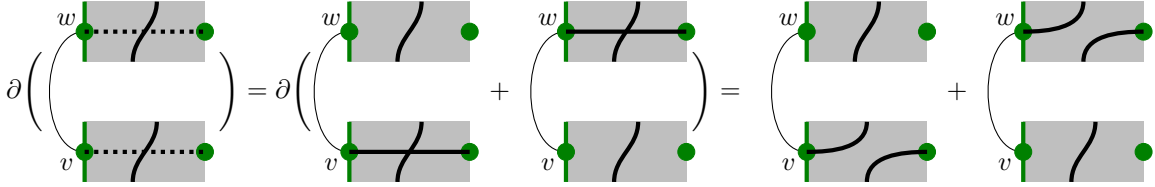
\begin{figure}
\begin{center}
\begin{tikzpicture}[mypersp, scale=1.6]
\begin{scope}[xzp=0, xshift = -2 cm]
\draw (-0.5,0.75) node {$\partial \Bigg($};
\strandbackgroundshading
\beforevused
\aftervused
\beforewused
\afterwused
\cubestrandsetup
\lefton
\righton
\draw [ultra thick]  (0.4,0) to [out=90, in=-90] (0.6,0.5);
\draw [ultra thick, dotted]  (0,0.25) -- (1,0.25); 
\draw [ultra thick, dotted]  (0,1.25) -- (1,1.25); 
\draw [ultra thick]  (0.4,1) to [out=90, in=-90] (0.6,1.5);
\draw (1.5,0.75) node {$\Bigg) = \partial \Bigg($};
\end{scope}
\begin{scope}[xzp=0, xshift=0.2 cm]
\strandbackgroundshading
\beforevused
\aftervused
\beforewused
\afterwused
\cubestrandsetup
\lefton
\righton
\draw [ultra thick]  (0.4,0) to [out=90, in=-90] (0.6,0.5);
\draw [ultra thick]  (0,0.25) -- (1,0.25); 
\draw [ultra thick]  (0.4,1) to [out=90, in=-90] (0.6,1.5);
\draw (1.25,0.75) node {$+$};
\end{scope}
\begin{scope}[xzp=0, xshift=2 cm]
\strandbackgroundshading
\beforevused
\aftervused
\beforewused
\afterwused
\cubestrandsetup
\lefton
\righton
\draw [ultra thick]  (0.4,0) to [out=90, in=-90] (0.6,0.5);
\draw [ultra thick]  (0,1.25) -- (1,1.25); 
\draw [ultra thick]  (0.4,1) to [out=90, in=-90] (0.6,1.5);
\draw (1.25,0.75) node {$\Bigg) =$};
\end{scope}
\begin{scope}[xshift = 4 cm, xzp=0]
\strandbackgroundshading
\beforevused
\aftervused
\beforewused
\afterwused
\cubestrandsetup
\lefton
\righton
\draw [ultra thick]  (0,0.25) to [out=0, in=-90] (0.5,0.5);
\draw [ultra thick]  (0.5,0) to [out=90, in=180] (1,0.25);
\draw [ultra thick]  (0.4,1) to [out=90, in=-90] (0.6,1.5);
\draw (1.25,0.75) node {+};
\end{scope}
\begin{scope}[xzp=0, xshift=5.8 cm]
\strandbackgroundshading
\beforevused
\aftervused
\beforewused
\afterwused
\cubestrandsetup
\lefton
\righton
\draw [ultra thick]  (0,1.25) to [out=0, in=-90] (0.5,1.5);
\draw [ultra thick]  (0.5,1) to [out=90, in=180] (1,1.25);
\draw [ultra thick]  (0.4,0) to [out=90, in=-90] (0.6,0.5);
\end{scope}
\end{tikzpicture} 
\caption{The two possible choices of strand diagrams are equal in homology.}
\label{fig:equal_in_homology}
\end{center}
\end{figure}

Thus, if we know $h,s,t$, then there is at most one corresponding generator of $H(\A(\ZZ))$, and it is given locally by figure \ref{fig:homology_generators_local}. We summarise this discussion by the following proposition.
\begin{prop}
\label{prop:strands_homology_geometrically}
Let $\mathcal{Z}$ be an arc diagram, let $s,t$ be two subsets of $\{1, \ldots, k\}$, and let $h \in H_1 ({\bf Z}, {\bf a})$. 
The $h$-graded summand of $I(s) \cdot H(\A(\ZZ)) \cdot I(t)$ is nonzero if and only if the data of $h,s,t$ near every pair of points $v,w \in {\bf a}$ matched by $M$ are as in one of the cases in figure \ref{fig:homology_generators_local} (up to a possible relabelling of $v$ and $w$). In this case the $h$-graded summand of $I(s) \cdot H(\A(\mathcal{Z})) \cdot I(t)$ is one-dimensional, generated by the corresponding diagram in figure \ref{fig:homology_generators_local}, which is unique in homology.
\qed
\end{prop}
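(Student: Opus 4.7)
The plan is to derive the proposition directly from Theorem \ref{thm:LOT_theorem}, combined with the case analysis carried out in the discussion immediately preceding the proposition. The three conditions (i), (ii), (iii) of the theorem are global, and the preceding analysis has essentially already translated them into local data at each matched pair $v,w$. What remains is to organize this carefully into the two implications of the biconditional, and to handle the one case in which the local data admits two a priori distinct diagrams.

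First I would establish the forward direction. Suppose the $h$-graded summand of $I(s)\cdot H(\A(\ZZ))\cdot I(t)$ is nonzero, so $(h,s,t)$ satisfies conditions (i)--(iii) of Theorem \ref{thm:LOT_theorem}. For each matched pair $v,w\in{\bf a}$ there are a priori $2^4=16$ possibilities for which of the four adjacent steps are used by $h$, together with the four possible values of $M(v)\in s\cap t$, $s\backslash t$, $t\backslash s$, or $M(v)\notin s\cup t$. Going through cases (1)--(5) above, conditions (i) and (ii) together with the $\ZZ$-constrained/symmetrized property coming from (iii) cut this down to precisely the list of configurations depicted in Figure \ref{fig:homology_generators_local}. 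In particular, as observed in case (3), the steps before both $v$ and $w$ cannot simultaneously be used, since this would force a strand to end at each of $v$ and $w$, violating the $\ZZ$-constrained condition; the analogous statement holds for the steps after; and as observed in case (2)(b), condition (ii) forces $M(v)\notin s\cap t$ whenever exactly one of $v,w$ lies in $\Int(\supp h)$.

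Next I would establish the converse. Assume that at every matched pair the local data of $h,s,t$ is as in one of the cases of Figure \ref{fig:homology_generators_local}. Condition (i) is immediate, since $\supp h$ is specified step-by-step by the local pictures, each of which uses each step with multiplicity $0$ or $1$; condition (ii) is visible case by case. For condition (iii), the local fragments shown in Figure \ref{fig:homology_generators_local} assemble to a global diagram: on each interval $Z_j$, the used steps determined by $h$ connect up consistently, and at each pair $v,w$ the local fragment specifies (up to the single ambiguous case) what strands begin and end there. This global diagram is $\ZZ$-constrained, symmetrized, crossingless, and has grading $(s,t,h)$, so (iii) holds, and Theorem \ref{thm:LOT_theorem} yields that the $h$-graded summand is $1$-dimensional, represented by that diagram.

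Finally I would address well-definedness in the ambiguous case, namely case (5)(b), where both $v,w\in\Int(\supp h)$ and $M(v)\notin s\cup t$, for which the local fragment can be drawn in two distinct ways (bottom right of Figure \ref{fig:homology_generators_local}). Theorem \ref{thm:LOT_theorem} guarantees that the homological-degree $h$ summand is $1$-dimensional, so these two choices must represent the same homology class; to exhibit this concretely I would use the identity displayed in Figure \ref{fig:equal_in_homology}. Writing $\eta$ for the symmetrized $\ZZ$-constrained diagram obtained by inserting dotted (symmetrized) horizontal strands at $M(v)$ at both the top and bottom, together with the two vertical crossings at $v$ and $w$, a direct computation of $\partial\eta$ resolves each dotted strand and sums to the two candidate diagrams; thus they differ by a boundary, completing the uniqueness assertion. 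The main obstacle is the final cohomological identification: one must verify that when $\partial$ is applied to the relevant symmetrized diagram no further terms survive, which relies on the fact that everywhere outside the fragment at $v,w$ the two candidate diagrams agree and have no additional crossings, so all other potential resolutions cancel in pairs or vanish. Once this is checked the proposition follows.
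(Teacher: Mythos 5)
Your proposal is correct and follows essentially the same route as the paper: the proposition is obtained by combining theorem \ref{thm:LOT_theorem} with the local case analysis at each matched pair, verifying both directions of the biconditional, and resolving the single ambiguous case (both $v,w$ interior to $\supp h$ with $M(v)\notin s\cup t$) via the differential identity of figure \ref{fig:equal_in_homology}, including the observation that the identity persists when the two candidate diagrams agree and are crossingless away from $v,w$. No gaps.
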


\subsection{Multiplication in homology}

As $\A(\ZZ)$ is a differential graded algebra, multiplication is well-defined in $H(\A(\ZZ))$. Suppose we have two generators of $H(\A(\ZZ))$ represented by symmetrised $\ZZ$-constrained strand diagrams $\mu_0, \mu_1$. Suppose $\mu_i$ goes from $s_i$ to $t_i$ and has homological grading $h_i$, having support on used steps. If $t_0 \neq s_1$ then $\mu_0 \mu_1 = 0$; so assume $t_0 = s_1$. Then $\mu_0 \mu_1$ goes from $s_0$ to $t_1$ and has homological grading $h_0 + h_1$. We describe the homology class of $\mu_0 \mu_1$ in $H(\A(\ZZ))$ in the following proposition.
\begin{prop}\
\label{prop:multiplication_in_homology}
\begin{enumerate}
\item
If $\mu_0, \mu_1$ have a common used step, then $\mu_0 \mu_1$ is zero in homology.
\item
If $\mu_0, \mu_1$ have no common used step, and the $(h_0 + h_1)$-graded summand of $I(s_0) \cdot H(\A(\ZZ)) \cdot I(t_1)$ is nonzero, then $\mu_0 \mu_1$ represents the generator of this summand.
\end{enumerate}
\end{prop}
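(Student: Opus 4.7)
Part (i) follows directly from Theorem~\ref{thm:LOT_theorem}. The homological grading of $\mu_0 \mu_1$ is $h_0 + h_1$, so the class $[\mu_0 \mu_1]$ lies in the $(h_0+h_1)$-graded summand of $I(s_0) \cdot H(\A(\ZZ)) \cdot I(t_1)$. Since $\mu_0, \mu_1$ each represent nonzero homology classes, by condition (i) of Theorem~\ref{thm:LOT_theorem} each of $h_0, h_1$ has multiplicity $0$ or $1$ on every step. If $\mu_0$ and $\mu_1$ share a used step $e$, then $e$ has multiplicity $1$ in each $h_i$ and hence multiplicity $2$ in $h_0+h_1$, violating condition (i) for $h_0+h_1$. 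So the target summand vanishes, and $[\mu_0 \mu_1] = 0$.

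For part (ii), assume $\supp h_0 \cap \supp h_1 = \emptyset$ and that the $(h_0+h_1)$-graded summand of $I(s_0) \cdot H(\A(\ZZ)) \cdot I(t_1)$ is nonzero. By Proposition~\ref{prop:strands_homology_geometrically}, this summand is one-dimensional and is represented by the (essentially unique) crossingless symmetrised $\ZZ$-constrained diagram $\nu$ from $s_0$ to $t_1$ with homological grading $h_0+h_1$. Since $[\mu_0 \mu_1]$ lives in a one-dimensional $\Z_2$-space, it suffices to show that $[\mu_0 \mu_1] \neq 0$, which I will do by showing that $\mu_0 \mu_1$ agrees, matched pair by matched pair, with $\nu$ in homology.

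The approach is a local analysis at each matched pair $\{v,w\}$ of $\ZZ$. By Proposition~\ref{prop:strands_homology_geometrically}, the local picture of $\mu_i$ near $\{v,w\}$ is dictated by the data of $(h_i, s_i, t_i)$ and is one of the cases of Figure~\ref{fig:homology_generators_local}; similarly the local picture of $\nu$ is determined by the data of $(h_0+h_1, s_0, t_1)$. Using $t_0 = s_1$ and the disjointness $\supp h_0 \cap \supp h_1 = \emptyset$, the set of used adjacent steps of $\mu_0 \mu_1$ at $\{v,w\}$ is the disjoint union of the corresponding sets for $\mu_0$ and $\mu_1$; this ensures no new crossings are created locally and the $\ZZ$-constrained property is preserved. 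A finite enumeration, combining each allowed case for $\mu_0$ with each allowed case for $\mu_1$, then shows that the concatenated local picture either coincides with that of $\nu$ or differs from it by the boundary relation of Figure~\ref{fig:equal_in_homology}.

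The main obstacle is purely one of bookkeeping in this case analysis. The subtle case is when $v,w \in \Int(\supp(h_0+h_1))$ with $M(v) \notin s_0 \cup t_1$ (the bottom-right of Figure~\ref{fig:homology_generators_local}): here the local picture of $\nu$ itself is ambiguous between two diagrams, and $\mu_0 \mu_1$ may realise either; the relation of Figure~\ref{fig:equal_in_homology} identifies them in homology. Once local agreement at every matched pair is verified, $\mu_0 \mu_1$ and $\nu$ coincide as symmetrised $\ZZ$-constrained diagrams modulo boundaries, so $[\mu_0 \mu_1] = [\nu]$ is the nonzero generator, as required.
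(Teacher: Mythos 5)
Your plan for part (i) is identical to the paper's and is correct. The issue lies in part (ii), where the critical step is not actually established.

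The heart of part (ii) is showing that $\mu_0 \mu_1$ is nonzero as an element of $\A(\ZZ)$ itself, i.e.\ that the two symmetrised diagrams actually concatenate (have a common section) at every matched pair. You assert that ``$t_0 = s_1$ and the disjointness $\supp h_0 \cap \supp h_1 = \emptyset$ \ldots ensures no new crossings are created locally and the $\ZZ$-constrained property is preserved,'' but these two conditions alone do not ensure this. A priori $\mu_0$ could have a non-horizontal strand forced to end at $v$ while $\mu_1$ has a non-horizontal strand forced to begin at the twin $w$; then no ending section of $\mu_0$ can agree with any starting section of $\mu_1$ (a section contains exactly one of each matched pair), and $\mu_0 \mu_1 = 0$ already at the chain level, leaving nothing to compare with $\nu$. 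What excludes this scenario is precisely the hypothesis that the $(h_0+h_1)$-graded summand of $I(s_0)\cdot H(\A(\ZZ)) \cdot I(t_1)$ is nonzero, and this requires a genuine argument. The paper runs it by contradiction: if concatenation fails at some $v,w$, then tracking which of the four adjacent steps are used by $h_0$ versus $h_1$ (using disjointness, the $\ZZ$-constrained property, and the fact that $\mu_0,\mu_1$ are each locally one of the cases of figure \ref{fig:homology_generators_local}) forces $h_0$ to be supported only on the step before $v$ and $h_1$ only on the step after $w$, whence $M(v) \notin s_0 \cup t_1$ while $v \in \partial^+(\supp(h_0+h_1))$ and $w \in \partial^-(\supp(h_0+h_1))$; this configuration does not appear in figure \ref{fig:homology_generators_local}, so by proposition \ref{prop:strands_homology_geometrically} the target summand would vanish, contradiction. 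Your proposed ``finite enumeration'' would ultimately have to encode exactly this analysis — in particular it must include, and then rule out, the combinations where concatenation would fail — but as written the enumeration is only promised, not done, and the stated justification (disjointness plus $t_0 = s_1$) is not sufficient. Your observation about the bottom-right ambiguous case and the role of figure \ref{fig:equal_in_homology} is correct and does need to be invoked, but it addresses a different, easier point (uniqueness in homology once the product is known to concatenate), not the nonvanishing of the product itself.
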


\begin{proof}
First, if $\mu_0, \mu_1$ have a common used step, then $(h_0 + h_1)$ has multiplicity $2$ on this step, so by theorem \ref{thm:LOT_theorem} the $(h_0 + h_1)$-graded summand of $H(\A(\ZZ))$ is zero. Second, if the $(h_0 + h_1)$-graded summand of $I(s_0) \cdot H(\A(\ZZ)) \cdot I(t_1)$ is zero, then $\mu_0 \mu_1$ must be zero in homology. Thus we can assume $\mu_0, \mu_1$ have no common used step, and that the $(h_0 + h_1)$-graded summand of $I(s_0) \cdot H(\A(\ZZ)) \cdot I(t_1)$ is nonzero; we show that $\mu_0 \mu_1$ generates this summand.

Since $\mu_0, \mu_1$ are nonzero in homology, by proposition \ref{prop:strands_homology_geometrically}, the local data of $h_0, s_0, t_0 = s_1$ and $h_1, s_1, t_1$ near every pair of matched places $v,w$ must be one of the cases shown in figure \ref{fig:homology_generators_local}. Moreover, the local data of $(h_0 + h_1), s_0, t_1$ near $v,w$ also appears in the figure, since the corresponding summand is nonzero.

Now if $\mu_0$ and $\mu_1$ do not concatenate along some $v,w$ then (after possibly relabelling $v,w$), $\mu_0$ must have a non-horizontal strand ending at $v$, and $\mu_1$ must have a non-horizontal strand beginning at $w$. Since $h_0, h_1$ have no step in common then the step before $v$ is used in $h_0$ but not $h_1$, and the step after $w$ is used in $h_1$ but not $h_0$. If the step after $v$ were used in $h_1$ then $\mu_1$ would have to have a strand beginning at $v$, but it already has one starting at $w$; so this step is not used in $h_1$. 
A similar argument shows that the step before $w$ is not used in $h_0$.
If the step before $w$ is used in $h_1$ then we have $w \in \Int(\supp h_1 )$ and $v \in {\bf a} \backslash (\supp h_1)$, so (by reference to figure \ref{fig:homology_generators_local}) $M(v) \notin s_1 \cup t_1$; but $M(v) \in s_1$ as $\mu_1$ has a strand beginning at $w$. This is a contradiction, so the step before $w$ is not used in $h_1$. A similar argument shows the step after $v$ is not used in $h_0$. Thus $h_0$ is supported only on the step before $v$, and hence $M(v) \notin s_0$. Similarly, $h_1$ is supported only on the step after $w$, and hence $M(v) \notin t_1$. Thus $(h_0 + h_1)$ is supported on the steps before $v$ and after $w$, so $\mu_0 \mu_1$ must have a strand beginning at $w$ and a strand ending at $v$, and yet $M(v) \notin s_0 \cup t_1$, yielding a contradiction. We conclude that $\mu_0, \mu_1$ must in fact concatenate along each $v,w$. As $\mu_0 \mu_1$ is obtained by such concatenation and has the starting places of $\mu_0$ and the ending places of $\mu_1$, it is a symmetrised $\ZZ$-constrained strand diagram.

As $\mu_0 \mu_1$ is a symmetrised $\ZZ$-constrained strand diagram, and the local data of $(h_0 + h_1), s_0, t_1$ near every matched pair $v,w$ appears in figure \ref{fig:homology_generators_local}, its homology class is nonzero and generates the $(h_0 + h_1)$-summand of $I(s_0) \cdot H(\A(\ZZ)) \cdot I(t_1)$.
\end{proof}

\section{The correspondence}
\label{sec:correspondence}

Having seen several similarities between cubulated contact structures and arc and strand diagrams, we now make the correspondence precise, establishing a ``dictionary" between contact structures and strand diagrams as shown in figure \ref{fig:dictionary}.

\begin{figure}
\begin{center}
\setlength\dashlinedash{0.2pt}
\setlength\dashlinegap{4.5pt}
\setlength\arrayrulewidth{0.2pt}
\begin{tabular}{c|c}
\hline \hline
{\bf Contact geometry} & {\bf Heegaard-Floer} \\ \hline \hline \\
Quadrangulated surface $(\Sigma, Q)$ & Arc diagram $\ZZ = ({\bf Z}, {\bf a}, M)$ \\ \hdashline
Positive vertex of $(\Sigma, V)$ & Interval $Z_j \in {\bf Z}$ \\ \hdashline
Square of $Q$ & Element of $\{1, \ldots, k\}$, Arc in $\ZZ$ \\ \hdashline
Positive vertex $v$ of square & Place $v \in {\bf a}$ \\ \hdashline
Pair of positive vertices $(v,w)$ of square & Matched places $M(v)=M(w)$, Endpoints of arc  \\ \hdashline
Index $I(\Sigma,Q) = \#$ squares of $Q$ & $k = \#$ matched pairs \\
$=\#$ cubes of cubulation $(\Sigma,Q) \times [0,1]$ & $= \frac{1}{2} |{\bf a}| = \frac{1}{2} \#$ places \\ 
\hdashline
Side faces of cubes & Steps of ${\bf Z}$ \\ \hdashline
Boundary edges of $(\Sigma,V)$ & Exterior steps of ${\bf Z}$ \\
Faces on side boundary of $(\Sigma,Q) \times [0,1]$ \\ \hdashline
Decomposing arcs of $Q$ & Interior steps of ${\bf Z}$ \\
Glued side faces of cubes \\ \hdashline
Side faces of cube before, after vertex & Steps before, after place \\
\hline
Basic dividing set $\Gamma$ & Subset $s \subseteq \{1, \ldots, k\}$, Idempotent $I(s)$ \\
\hdashline
Bottom squares which are on/negative & Beginning $s \subseteq \{1, \ldots, k\}$,  Left idempotent $I(s)$ \\
\hdashline
Top squares which are on/negative & Ending $t \subseteq \{1, \ldots, k\}$, Right idempotent $I(t)$ \\
\hdashline
Euler class $e(\Gamma) = I(\Sigma,V) - 2i$ & Sizes of beginning, ending sets $i = |s|=|t|$ \\
\hline
Used faces & Used steps = $\supp h$ \\
\hdashline
Data of top/bottom faces on/off, & Data of used/unused steps (or $h$), \\
side faces before/after $v,w$ used/unused & $s,t$ near $v,w$ \\
\hdashline
Data satisfies lemma \ref{lem:basic_cube_tight_conditions_1} 
or figure \ref{fig:tight_cubes}  & Data satisfies proposition \ref{prop:strands_homology_geometrically} 
or figure \ref{fig:homology_generators_local} \\
\hdashline
Given $\Gamma_0, \Gamma_1$, choice of used faces & Given $s,t$, choice of used steps \\
so each cube appears in  figure \ref{fig:tight_cubes} &  so each matched pair appears in figure \ref{fig:homology_generators_local} \\
\hdashline
All cubes tight & Symmetrised $\ZZ$-constrained strand diagram \\
Tight contact structure on $M(\Gamma_0, \Gamma_1)$ & which is nonzero in homology \\
\hline
$CA(\Sigma,Q)$ & $H(\A(\ZZ))$ \\
\hdashline
Summand $1_{\Gamma_0} \cdot CA(\Sigma,Q) \cdot 1_{\Gamma_1}$ & Summand $I(s) \cdot H(\A(\ZZ)) \cdot I(t)$ \\
\hdashline
Summand $CA_e (\Sigma,Q)$ & Summand $H(\A(\ZZ,i))$ \\
\hdashline
Relative Euler class $e(\xi)$ & Homological/spin-c grading \\
\hline
Stacking cubes & Multiplication in $H(\A(\ZZ))$ \\
\hdashline
Stack two used faces (=overtwisted) & Multiply diagrams with common used step (=0) \\
\hline
Bypass & Strand
\end{tabular}
\end{center}
\caption{Dictionary between contact geometry and Heegaard-Floer notions.}
\label{fig:dictionary}
\end{figure}

\subsection{From arc diagrams to quadrangulated surfaces and back}

Let $(\Sigma,Q)$ be a quadrangulated surface. As discussed in section \ref{sec:quadrangulations}, drawing the positive diagonal in each square yields the positive spine $G_Q^+$, which is a tape graph, and onto which $\Sigma$ retracts; the half-edges of $G_Q^+$ incident at a vertex are totally ordered clockwise around the vertex. And as discussed in section \ref{sec:arc_diagrams_tape_graphs}, a tape graph is equivalent to an arc diagram by ``blowing up" the vertices into line segments.

Conversely, from an arc diagram $\ZZ = ({\bf Z}, {\bf a}, M)$, we may collapse the line segments into vertices and obtain a tape graph $G_\ZZ$; then, with incident half-edges oriented clockwise around each vertex, we may thicken this tape graph into an oriented surface --- indeed, into a quadrangulated surface. In \cite[prop. 4.5]{Me14_twisty_itsy_bitsy} we gave a precise condition for when an oriented tape graph is the spine of a quadrangulated surface. Translated into the present context, the condition is that oriented surgery on ${\bf Z}$ at each $M^{-1}(i)$ results in a 1-manifold consisting of arcs, with no closed loops. (Each boundary component of the thickening contains a vertex of the graph, along with its adjacent barrier half-sides, in the language of \cite{Me14_twisty_itsy_bitsy}.) This condition is part of the definition of an arc diagram. Thus, as discussed in \cite{Me14_twisty_itsy_bitsy}, we naturally obtain a quadrangulated surface by thickening each edge of $G_\ZZ$ into the diagonal of a square, and gluing together sides of squares corresponding to adjacent half-edges. See figure \ref{fig:arc_diagram_to_surface}.

Thus, from $(\Sigma,Q)$ we obtain $\ZZ$ by retracting onto the positive spine, and then blowing up vertices into segments. From $\ZZ$ we obtain $(\Sigma,Q)$ by collapsing segments into vertices and then thickening each edge into a square.  

It is clear that these two processes are inverses of each other, and so there is a bijective correspondence between quadrangulated surfaces $(\Sigma,Q)$ and arc diagrams $\ZZ$, under which positive vertices of $(\Sigma,V)$ correspond to intervals of ${\bf Z}$, squares of $Q$ (or cubes of the cubulation) correspond to arcs of $\ZZ$ or elements of $\{1, \ldots, k\}$, a positive vertex of a particular square corresponds to a place in ${\bf a}$, and the two positive vertices of a square correspond to two matched places $v,w$ with $M(v) = M(w)$. The number of squares $I(\Sigma,V)$ in $Q$ (or cubes in the cubulation) is equal to the number $k = \frac{1}{2} |{\bf a}|$ of arcs or matched pairs. Moreover, the edges in $(\Sigma,Q)$ (or side faces of cubes) correspond to the steps of ${\bf Z}$: decomposing arcs of $Q$ (or glued side faces) correspond to interior steps of ${\bf Z}$, and boundary edges of $(\Sigma,Q)$ (or unglued side faces of cubes, those on the side boundary of $\Sigma \times [0,1]$) correspond to exterior steps of ${\bf Z}$. The faces of a cube before and after a vertex $v$ correspond to the steps before and after the corresponding place in ${\bf a}$.

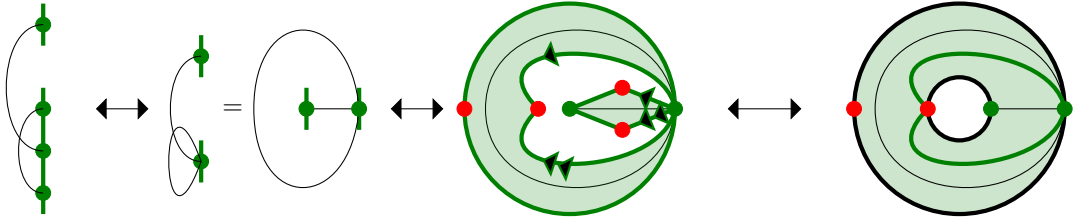
\begin{figure}
\begin{center}
\begin{tikzpicture}[scale=1.4,
decomposition/.style={thick, draw=green!50!black}, 
vertex/.style={draw=green!50!black, fill=green!50!black},
>=triangle 90, 
decomposition glued1/.style={thick, draw=green!50!black, postaction={nomorepostaction,decorate, decoration={markings,mark=at position 0.7 with {\arrow{>}}}}},
decomposition glued2/.style={thick, draw = green!50!black, postaction={nomorepostaction, decorate, decoration={markings,mark=at position 0.7 with {\arrow{>>}}}}}
]
\draw [ultra thick, color=black!50!green] (0,0) -- (0,1.2);
\draw [ultra thick, color=black!50!green] (0,1.6) -- (0,2);
\draw [color=black!50!green, fill=black!50!green] (0,0.2) circle (2 pt);
\draw [color=black!50!green, fill=black!50!green] (0,0.6) circle (2 pt);
\draw [color=black!50!green, fill=black!50!green] (0,1) circle (2 pt);
\draw [color=black!50!green, fill=black!50!green] (0,1.8) circle (2 pt);
\draw (0,0.2) to [bend left=90] (0,1);
\draw (0,0.6) to [bend left=90] (0,1.8);
\draw [<->] (0.5,1) -- (1,1);

\begin{scope}[xshift=1.5cm]
\draw [color=black!50!green, ultra thick] (0,0.3) -- (0,0.7);
\draw [color=black!50!green, ultra thick] (0,1.3) -- (0,1.7);
\draw [color=black!50!green, fill=black!50!green] (0,0.5) circle (2 pt);
\draw [color=black!50!green, fill=black!50!green] (0,1.5) circle (2 pt);
\draw (0,0.5) to [bend left=90] (0,1.5);
\draw (0,0.5) .. controls ($ (0,0.5) + (110:1.2) $) and ($ (0,0.5) + (-110:1.2) $) .. (0,0.5);
\draw (0.3,1) node {$=$};
\end{scope}
\begin{scope}[xshift=2cm]
\draw (1,1) -- (0.5,1);
\draw [color=black!50!green, ultra thick] (1,0.8) -- (1,1.2);
\draw [color=black!50!green, ultra thick] (0.5,0.8) -- (0.5,1.2);
\draw (1,1) .. controls ($ (1,1) + (95:1) $) and ($ (0,1) + (90:1) $) .. (0,1) .. controls ($ (0,1) + (-90:1) $) and ($ (1,1) + (-95:1) $) .. (1,1);
\draw [color=black!50!green, fill=black!50!green] (1,1) circle (2 pt);
\draw [color=black!50!green, fill=black!50!green] (0.5,1) circle (2 pt);
\draw [<->] (1.3,1) -- (1.8,1);
\end{scope}

\begin{scope}[xshift=5 cm]
\draw (1,1) -- (0,1);
\draw [fill=black!50!green, opacity=0.2] (1,1) -- (0.5,1.2) -- (0,1) -- (0.5,0.8) -- (1,1);
\draw [color=black!50!green, ultra thick] (1,1) -- (0.5,1.2) -- (0,1) -- (0.5,0.8) -- (1,1);

\draw (1,1) .. controls ($ (1,1) + (95:1) $) and ($ (-0.8,1) + (90:1) $) .. (-0.8,1) .. controls ($ (-0.8,1) + (-90:1) $) and ($ (1,1) + (-95:1) $) .. (1,1);
\draw [fill=black!50!green, opacity=0.2] (1,1) .. controls ($ (1,1) + (100:0.7) $) and ($ (-0.3,1) + (135:1) $) .. (-0.3,1) .. controls ($ (-0.3,1) + (-135:1) $) and ($ (1,1) + (-100:0.7) $) .. (1,1) arc (360:0:1);
\draw [color=black!50!green, ultra thick] (1,1) .. controls ($ (1,1) + (100:0.7) $) and ($ (-0.3,1) + (135:1) $) .. (-0.3,1) .. controls ($ (-0.3,1) + (-135:1) $) and ($ (1,1) + (-100:0.7) $) .. (1,1) arc (360:0:1);

\draw [decomposition glued1] (1,1) -- (0.5,1.2);
\draw [decomposition glued1] (1,1) .. controls ($ (1,1) + (100:0.7) $) and ($ (-0.3,1) + (135:1) $) .. (-0.3,1);

\draw [decomposition glued2] (1,1) -- (0.5,0.8);
\draw [decomposition glued2] (1,1) .. controls ($ (1,1) + (-100:0.7) $) and ($ (-0.3,1) + (-135:1) $) .. (-0.3,1);

\draw [color=black!50!green, fill=black!50!green] (1,1) circle (2 pt);
\draw [color=black!50!green, fill=black!50!green] (0,1) circle (2 pt);
\draw [color=red, fill=red] (0.5,1.2) circle (2 pt);
\draw [color=red, fill=red] (0.5,0.8) circle (2 pt);
\draw [color=red, fill=red] (-0.3,1) circle (2 pt);
\draw [color=red, fill=red] (-1,1) circle (2 pt);
\draw [<->] (1.5,1) -- (2.2,1);
\end{scope}

\begin{scope}[xshift = 8.7 cm]
\draw [fill=black!50!green, opacity=0.2] (1,1) arc (0:360:1);
\draw [fill=white] (0.3,1) arc (0:360:0.3);
\draw [ultra thick] (0,1) circle (1 cm);
\draw [ultra thick] (0,1) circle (0.3 cm);
\draw (1,1) -- (0.3,1);
\draw (1,1) .. controls ($ (1,1) + (95:1) $) and ($ (-0.8,1) + (90:1) $) .. (-0.8,1) .. controls ($ (-0.8,1) + (-90:1) $) and ($ (1,1) + (-95:1) $) .. (1,1);
\draw [color=black!50!green, ultra thick] (1,1) .. controls ($ (1,1) + (100:0.7) $) and ($ (-0.3,1) + (135:1) $) .. (-0.3,1) .. controls ($ (-0.3,1) + (-135:1) $) and ($ (1,1) + (-100:0.7) $) .. (1,1);
\draw [color=black!50!green, fill=black!50!green] (1,1) circle (2 pt);
\draw [color=black!50!green, fill=black!50!green] (0.3,1) circle (2 pt);
\draw [color=red, fill=red] (-1,1) circle (2 pt);
\draw [color=red, fill=red] (-0.3,1) circle (2 pt);
\end{scope}
\end{tikzpicture}

\caption{From arc diagram to quadrangulated surface. Green and red vertices are positive and negative.}
\label{fig:arc_diagram_to_surface}
\end{center}
\end{figure}

We note that a version of this construction appears in \cite[sec. 2.1]{Zarev09}, which constructs a surface from an arc diagram $\mathcal{Z}= ({\bf Z}, {\bf A}, M)$ by thickening each segment $Z_i$ into a rectangle $Z_i \times [0,1]$, and thickening the 1-handles so they are also rectangles, attached at $M^{-1}(i) \times \{0\}$. Our construction is an equivalent thickening.

Since the squares of $(\Sigma, Q)$ correspond to the elements of $\{1, \ldots, k\}$, we may associate to each subset $s \subseteq \{1, \ldots, k\}$ the basic dividing set $\Gamma$ where the squares corresponding to $s$ are on (have standard negative dividing set), and other squares are off (have positive dividing set). This dividing set is equivalent to the elementary dividing sets associated to $s$ in \cite[sec. 6.1]{Zarev10}. If $|s| = i$, then $e(\Gamma) = k-2i = I(\Sigma,V)-2i$. This gives a bijective correspondence between subsets of $\{1, \ldots, k\}$, and basic dividing sets on $(\Sigma,Q)$. If $\Gamma_0, \Gamma_1$ are basic dividing sets corresponding to subsets $s,t \subseteq \{1, \ldots, k\}$, then in $M(\Gamma_0, \Gamma_1)$, the cubes have bottom faces corresponding to $s$ and top faces corresponding to $t$.

\subsection{From contact structures to strand diagrams and back}

Let $(\Sigma,Q)$ be a quadrangulated surface corresponding to an arc diagram $\mathcal{Z} = ({\bf Z}, {\bf A}, M)$, and let $\Gamma_0, \Gamma_1$ be basic dividing sets corresponding to $s,t \subseteq \{1, \ldots, k\}$. Let $\xi$ be a tight contact structure on $M(\Gamma_0, \Gamma_1)$, which is cubulated by $(\Sigma,Q) \times [0,1]$. From proposition \ref{prop:contact_arc_labelling}, $\xi$ corresponds to a labelling of each decomposing arc of $Q$ as used or unused, so that the dividing set on each face of each cube is determined. Each cube satisfies the conditions of lemma \ref{lem:basic_cube_tight_conditions_1} and thus is one of the cases depicted in figure \ref{fig:tight_cubes}. 

To (the isotopy class of) $\xi$ we now associate a generator of $H(\A(\ZZ))$ (i.e. a homology class of symmetrised $\ZZ$-constrained strand diagram) as follows. We declare that the strand diagram goes from $s$ to $t$ and, since the decomposing arcs of $Q$ correspond to the interior steps of ${\bf Z}$, we declare the used interior steps to be those corresponding to the used decomposing arcs. These determine a homology class $h \in H_1 ({\bf Z}, {\bf a})$ supported on the used steps, each with multiplicity $1$.

A cube of the cubulation corresponds to an element of $\{1, \ldots, k\}$,  and hence to a matched pair of places. The positive vertices $v,w$ on the cube correspond to the two places, which we also call $v,w \in {\bf a}$. The bottom face is on or off accordingly as $M(v) = M(w)$ lies in $s$ or not; the top face is on or off accordingly as $M(v)=M(w)$ lies in $t$ or not; and the side faces before and after $v$ and $w$ are used or not accordingly as the steps before and after $v$ and $w$ are used or not. Thus the data of $\Gamma_0, \Gamma_1$ and the used faces of the cube determine the data of $h,s,t$ near $v,w$, and vice versa. We observe that a set of cube data appears in figure \ref{fig:tight_cubes} if and only if the corresponding data of $h,s,t$ near $v,w$ also appears; indeed, they are written next to each other in the figure. Thus the data of $h,s,t$ near each matched pair $v,w$ appears in figure \ref{fig:tight_cubes}, and by proposition \ref{prop:strands_homology_geometrically} the $h$-graded summand of $I(s) \cdot H(\A(\ZZ)) \cdot I(t)$ is one-dimensional, generated by the unique homology class of diagram given locally near each pair of marked points by figure \ref{fig:tight_cubes}. We associate to $\xi$ this homology class of symmetrised $\ZZ$-constrained strand diagram.

Conversely, to a generator of $H(\A(\ZZ))$, represented by a symmetrised $\ZZ$-constrained strand diagram $\mu$, we can associate an (isotopy class of) tight contact structure $\xi$. Let $\mu$ go from $s$ to $t$ and have homological grading $h$. We take $\xi$ on $M(\Gamma_0, \Gamma_1)$, where $\Gamma_0, \Gamma_1$ correspond to $s$ and $t$, such that the used faces of $\xi$ correspond to the used steps of $\mu$. Since $\mu$ is nonzero in $H(\A(\ZZ))$, near every pair of matched places $v,w$ the data of $h,s,t$ is one of the cases depicted in figure \ref{fig:tight_cubes}; hence for each cube, the cube data of $\xi$ also appears in figure \ref{fig:tight_cubes}, and hence each cube is tight. So we associate to $\mu$ the (isotopy class of) tight contact structure $\xi$ constructed from these tight cubes.

The correspondence between tight contact structures and generators of homology is clearly bijective. We simply pass back and forth between the local description of a contact structure near a cube, and the local description of a strand diagram near a pair of matched points, using figure \ref{fig:tight_cubes}. Thus we have a $\Z_2$-module isomorphism $CA(\Sigma,Q) \cong H(\A(\ZZ))$.

A contact structure on $M(\Gamma_0, \Gamma_1)$ corresponds to a generator from $s$ to $t$, and so this isomorphism restricts to isomorphisms of summands $1_{\Gamma_0} \cdot CA(\Sigma,Q) \cdot 1_{\Gamma_1} \cong I(s) \cdot H(\A(\ZZ)) \cdot I(t)$. And a contact structure with Euler class $e$ corresponds to a generator where $|s|=|t|=i$, where $e = k-2i$, so the isomorphism restricts to summands $CA_e (\Sigma,Q) \cong H(\A(\ZZ,i))$. An $h$-graded summand of $H(\A(\ZZ))$ corresponds to those contact structures with used faces given by the support of $h$, and hence to a specified relative Euler class summand of $CA(\Sigma,Q)$.

\subsection{Multiplication: the name of the game}

We now show that the $\Z_2$-module isomorphism $CA(\Sigma,Q) \To H(\mathcal{Z})$ preserves multiplication. So let $\Gamma_0, \Gamma_1, \Gamma_2$ be three basic dividing sets corresponding to subsets $s_0, s_1, s_2 \subseteq \{1, \ldots, k\}$, let $\xi_0$ be a tight contact structure on $M(\Gamma_0, \Gamma_1)$ corresponding to a generator of $I(s_0) \cdot H(\A(\ZZ)) \cdot I(s_1)$, represented by a strand diagram $\mu_0$, and let $\xi_1$ be a tight contact structure on $M(\Gamma_1, \Gamma_2)$ corresponding to a generator of $I(s_1) \cdot H(\A(\ZZ)) \cdot I(s_2)$ represented by a strand diagram $\mu_1$. Let $\xi_i$ have used faces $U_i$ and let $\mu_i$ have homological grading $h_i$.

We have seen that the used faces $U_i$ of each $\xi_i$ correspond to the used steps of each $\mu_i$. If $\xi_0, \xi_1$ have a common used face, then $\mu_0, \mu_1$ have a common used step. In this case stacking $\xi_0$ and $\xi_1$ yields an overtwisted contact structure, by proposition \ref{prop:stacking_cubulated_structures}(i), so $\xi_0 \xi_1 = 0$ in $CA(\Sigma,Q)$; and correspondingly, $\mu_0 \mu_1$ is zero in homology, by proposition \ref{prop:multiplication_in_homology}(i). We can now assume the used faces $U_i$ of the $\xi_i$ are disjoint, and the used steps of the $\mu_i$ are disjoint.

In this case, by proposition \ref{prop:stacking_cubulated_structures}(ii) $\xi_0 \xi_1$ is the cubulated contact structure on $M(\Gamma_0, \Gamma_2)$ with used faces given by $U_0 \cup U_1$. This contact structure is tight if and only if each cube is one of the cases depicted in figure \ref{fig:tight_cubes}. Similarly, by proposition \ref{prop:multiplication_in_homology}(ii), $\mu_0 \mu_1$ is nonzero in homology if and only if the $(h_0 + h_1)$-graded summand of $I(s_0) \cdot H(\A(\ZZ)) \cdot I(s_2)$ is nonzero, and by proposition \ref{prop:strands_homology_geometrically} this occurs if and only if the data of $h_0 + h_1, s_0, s_2$ near each matched pair $v,w$ is one of the cases depicted in figure \ref{fig:tight_cubes}. Thus $\xi_0 \xi_1$ is tight if and only if $\mu_0 \mu_1$ is nonzero in homology, and in this case, since the dividing sets $\Gamma_0, \Gamma_2$ of $\xi_0 \xi_1$ correspond to the beginning and end $s_0, s_2$ of $\mu_0 \mu_1$, and the used faces $U_0 \cup U_1$ of $\xi_0 \xi_1$ correspond to the used steps of $\mu_0 \mu_1$, the contact structure $\xi_0 \xi_1$ maps to the homology class of $\mu_0 \mu_1$ under the module isomorphism $CA(\Sigma,Q) \cong H(\A(\ZZ))$.

We have now proved theorem \ref{thm:main_thm}. In fact we have also proved isomorphisms of $\Z_2$-submodule summands
\[
CA_e (\Sigma,Q) \cong H(\A(\ZZ,i)) \quad
1_{\Gamma_0} \cdot CA(\Sigma,Q) \cdot 1_{\Gamma_1} \cong I(s_0) \cdot H(\A(\ZZ)) \cdot I(s_1)
\]
where $e = I(\Sigma,V) - 2i$ and $\Gamma_0, \Gamma_1$ are basic dividing sets corresponding to $s_0, s_1 \subseteq \{1, \ldots, k\}$.

We remark that the strands of a strand diagram can be interpreted as \emph{bypass additions}, which are compatible with the quadrangulation in an appropriate sense. Alternatively, if we draw the principal diagonals in faces which are on, then the upwards movement of a strand diagram corresponds to a clockwise rotation of such a diagonal about one of its endpoints. The ordering of the points on an arc diagram also provide an interesting refinement of the notion of \emph{partial orders} on objects of a contact category, as discussed in \cite{Me09Paper, Me16ContactCategories}, reminiscent of the clock theorem of formal knot theory \cite{Kauffman_FKT83}.

\section{Relation to sutured Floer homology}
\label{sec:SFH}

We finally prove corollary \ref{cor:SFH_dimension}, giving the dimension of sutured Floer homology related of the sutured manifold corresponding to our construction. As usual, let $\mathcal{Z}$ be an arc diagram corresponding to a quadrangulated surface $(\Sigma, Q)$. 

The corollary essentially now follows immediately from theorem 6.4 of \cite{Zarev10}, giving an algebra isomorphism
\[
H ( A( \mathcal{Z} ) ) \cong
\bigoplus_{\Gamma_0, \Gamma_1 \text{ basic}} SFH( -M(\Gamma_0, \Gamma_1) ).
\]
(We have translated Zarev's notation into our own: the $(F \times [0,1], \Gamma_{I \rightarrow J})$ of \cite[sec. 6]{Zarev10} has sutures $\Gamma_I \times \{0\}$ and $\Gamma_J \times \{1\}$, with a negative twist along the side boundary, opposite to the behaviour of a vertical dividing set. Reversing the orientation on $F$, but not on $[0,1]$, produces a sutured manifold $-F \times [0,1]$ with sutures $-\Gamma_I \times \{0\}$ and $-\Gamma_J \times \{1\}$, with a positive twist on the side boundary. These sutures do behave like a vertical dividing set and we denote this sutured manifold $-M(\Gamma_I, \Gamma_J)$.)

Moreover, if $s,t$ are subsets of $\{1, \ldots, k\}$ corresponding to basic dividing sets $\Gamma_s, \Gamma_t$, Zarev shows that the isomorphism above restricts to summands as an isomorphism
\[
I(s) \cdot H(A(\mathcal{Z})) \cdot I(t) \cong SFH(-M(\Gamma_s, \Gamma_t)).
\]

\begin{proof}[Proof of corollary \ref{cor:SFH_dimension}]
Combining theorem \ref{thm:main_thm} and Zarev's isomorphism, we have
\[
CA(\Sigma,Q) \cong 
\bigoplus_{\Gamma_0, \Gamma_1 \text{ basic}} SFH(-M(\Gamma_0, \Gamma_1))
\quad \text{and} \quad
1_{\Gamma_s} \cdot CA(\Sigma,Q) \cdot 1_{\Gamma_t} \cong 
SFH(-M(\Gamma_s, \Gamma_t)).
\]
The summand $1_{\Gamma_s} \cdot CA(\Sigma,Q) \cdot 1_{\Gamma_t}$ of the contact category algebra, as a $\Z_2$-module, has basis given by the isotopy classes contact structures on $M(\Gamma_s, \Gamma_t)$. Hence it has dimension equal to the number of isotopy classes of contact structures on $M(\Gamma_s, \Gamma_t)$.
\end{proof}



\bibliography{danbib}
\bibliographystyle{amsplain}

\end{document}